\providecommand{\U}[1]{\protect\rule{.1in}{.1in}}
\newtheorem{theorem}{Theorem}[section]
\newtheorem{lemma}[theorem]{Lemma}
\newtheorem{proposition}[theorem]{Proposition}
\newtheorem{definition}[theorem]{Definition}
\newtheorem{remark}{Remark}
\newtheorem{assumption}{Assumption}
\numberwithin{equation}{section}
\newcommand{\dist}{\mathrm{dist}}
\newcommand{\tr}{\mathrm{tr}}
\newcommand{\dom}{\mathrm{dom}}
\newcommand{\st}{\mathrm{s.\,t.}\,\,}
\newcommand{\bR}{\mathbb{R}}
\newcommand{\bN}{\mathbb{N}}
\newcommand{\cA}{\mathcal{A}}
\newcommand{\cB}{\mathcal{B}}
\newcommand{\cC}{\mathcal{C}}
\newcommand{\cO}{\mathcal{O}}
\newcommand{\cP}{\mathcal{P}}
\newcommand{\cR}{\mathcal{R}}
\newcommand{\cT}{\mathcal{T}}
\newcommand{\cU}{\mathcal{U}}
\newcommand{\tG}{\mathtt{G}}
\newcommand{\tV}{\mathtt{V}}
\newcommand{\tE}{\mathtt{E}}
\newcommand{\Rn}{\mathbb{R}^{n}}
\newcommand{\Rnp}{\mathbb{R}^{n \times p}} 
\newcommand{\Rpp}{\mathbb{R}^{p \times p}} 
\newcommand{\Rnn}{\mathbb{R}^{n \times n}}
\newcommand{\Rnq}{\mathbb{R}^{n \times q}}
\newcommand{\Rmq}{\mathbb{R}^{m \times q}}
\newcommand{\barX}{\bar{X}}
\newcommand{\bfone}{\mathbf{1}}
\newcommand{\bfzero}{\mathbf{0}}
\newcommand{\bfE}{\mathbf{E}}
\newcommand{\bfJ}{\mathbf{J}}
\newcommand{\bfU}{\mathbf{U}}
\newcommand{\bfV}{\mathbf{V}}
\newcommand{\bfW}{\mathbf{W}}
\newcommand{\bfX}{\mathbf{X}}
\newcommand{\bfZ}{\mathbf{Z}}
\newcommand{\Xik}{X_{i}^{(k)}}
\newcommand{\Xjk}{X_{j}^{(k)}}
\newcommand{\Xikn}{X_{i}^{(k+1)}}
\newcommand{\Xjkn}{X_{j}^{(k+1)}}
\newcommand{\Hik}{H_{i}^{(k)}}
\newcommand{\Hjk}{H_{j}^{(k)}}
\newcommand{\Uik}{U_{i}^{(k)}}
\newcommand{\Ujk}{U_{j}^{(k)}}
\newcommand{\Uikn}{U_{i}^{(k+1)}}
\newcommand{\Vik}{V_{i}^{(k)}}
\newcommand{\Vjk}{V_{j}^{(k)}}
\newcommand{\Vikn}{V_{i}^{(k+1)}}
\newcommand{\avXk}{\bar{\mathbf{X}}^{(k)}}
\newcommand{\avXkn}{\bar{\mathbf{X}}^{(k+1)}}
\newcommand{\avDk}{\bar{\mathbf{D}}^{(k)}}
\newcommand{\avGk}{\bar{\mathbf{G}}^{(k)}}
\newcommand{\avHk}{\bar{\mathbf{H}}^{(k)}}
\newcommand{\avUk}{\bar{\mathbf{U}}^{(k)}}
\newcommand{\avUkn}{\bar{\mathbf{U}}^{(k + 1)}}
\newcommand{\avVk}{\bar{\mathbf{V}}^{(k)}}
\newcommand{\avVkn}{\bar{\mathbf{V}}^{(k + 1)}}
\newcommand{\bfXk}{\mathbf{X}^{(k)}}
\newcommand{\bfXkn}{\mathbf{X}^{(k+1)}}
\newcommand{\bfZk}{\mathbf{Z}^{(k)}}
\newcommand{\bfZkn}{\mathbf{Z}^{(k+1)}}
\newcommand{\bfDk}{\mathbf{D}^{(k)}}
\newcommand{\bfDkn}{\mathbf{D}^{(k+1)}}
\newcommand{\bfGk}{\mathbf{G}^{(k)}}
\newcommand{\bfGkn}{\mathbf{G}^{(k+1)}}
\newcommand{\bfHk}{\mathbf{H}^{(k)}}
\newcommand{\bfUk}{\mathbf{U}^{(k)}}
\newcommand{\bfUkn}{\mathbf{U}^{(k + 1)}}
\newcommand{\bfVk}{\mathbf{V}^{(k)}}
\newcommand{\bfVkn}{\mathbf{V}^{(k + 1)}}
\newcommand{\barXk}{\bar{X}^{(k)}}
\newcommand{\barXkn}{\bar{X}^{(k + 1)}}
\newcommand{\barDk}{\bar{D}^{(k)}}
\newcommand{\barUk}{\bar{U}^{(k)}}
\newcommand{\barVk}{\bar{V}^{(k)}}
\newcommand{\barVkn}{\bar{V}^{(k + 1)}}
\newcommand{\barGk}{\bar{G}^{(k)}}
\newcommand{\barHk}{\bar{H}^{(k)}}
\newcommand{\zz}{^{\top}}
\newcommand{\inv}{^{-1}}
\newcommand{\ff}{_{\mathrm{F}}}
\newcommand{\fs}{^2_{\mathrm{F}}}
\newcommand{\SMnp}{\mathcal{S}_M^{n,p}}
\newcommand{\dkh}[1]{\left(#1\right)}
\newcommand{\hkh}[1]{\left\{#1\right\}}
\newcommand{\jkh}[1]{\left\langle#1\right\rangle}
\newcommand{\norm}[1]{\left\|#1\right\|}
\newcommand{\abs}[1]{\left\lvert #1\right\rvert}
\newcommand{\sym}{\mathrm{sym}}
\newcommand{\grad}{\mathrm{grad}\,}
\newcommand{\iid}{i \in [d]}
\newcommand{\iin}{i \in [n]}
\newcommand{\sumiid}{\sum_{i=1}^d}
\newcommand{\sumjjd}{\sum\limits_{j=1}^d}
\definecolor{Gray}{rgb}{0.5,0.5,0.5}
\newcommand{\Rmnum}[1]{\expandafter\@slowromancap\romannumeral #1@}
\begin{document}

\title{A Double Tracking Method for Optimization with Decentralized Generalized Orthogonality Constraints}

\author{Lei Wang\thanks{Department of Statistics, Pennsylvania State University, University Park, PA, USA (\href{mailto:wlkings@lsec.cc.ac.cn}{wlkings@lsec.cc.ac.cn}).} 
\and Nachuan Xiao\thanks{Institute of Operational Research and Analytics, National University of Singapore, Singapore (\href{mailto:xnc@lsec.cc.ac.cn}{xnc@lsec.cc.ac.cn}).}
\and Xin Liu\thanks{State Key Laboratory of Scientific and Engineering Computing, Academy of Mathematics and Systems Science, Chinese Academy of Sciences, and University of Chinese Academy of Sciences, Beijing, China (\href{mailto:liuxin@lsec.cc.ac.cn}{liuxin@lsec.cc.ac.cn}).}}

\date{} 

\maketitle

\begin{abstract}
	
In this paper, we consider the decentralized optimization problems with generalized orthogonality constraints, where both the objective function and the constraint exhibit a distributed structure.
Such optimization problems, albeit ubiquitous in practical applications, remain unsolvable by existing algorithms in the presence of distributed constraints.
To address this issue, we convert the original problem into an unconstrained penalty model by resorting to the recently proposed constraint-dissolving operator.
However, this transformation compromises the essential property of separability in the resulting penalty function, rendering it impossible to employ existing algorithms to solve.
We overcome this difficulty by introducing a novel algorithm that tracks the gradient of the objective function and the Jacobian of the constraint mapping simultaneously.
The global convergence guarantee is rigorously established with an iteration complexity.
To substantiate the effectiveness and efficiency of our proposed algorithm, we present numerical results on both synthetic and real-world datasets.

\end{abstract}


\section{Introduction}

Rapid advances in data collection and processing capabilities have paved the way for the utilization of distributed systems in a large number of practical applications, such as dictionary learning \cite{Raja2015cloud}, statistical inference \cite{Du2024empirical}, multi-agent control \cite{Dimarogonas2011distributed}, and neural network training \cite{Yuan2021decentlam,Zhang2024decentralized}.
The large scale and spatial/temporal disparity of data, coupled with the limitations in storage and computational resources, make centralized approaches infeasible or inefficient.
Consequently, decentralized algorithms are developed to solve an optimization problem through the collaboration of the agents, where the need to efficiently manage and process vast amounts of distributed data is paramount.

Given a distributed system of $d \in \bN_+$ agents connected by a communication network, the focus of this paper is on the following decentralized optimization problem with the generalized orthogonality constraint:
\begin{subequations} \label{opt:dest}
\begin{align}
	\label{opt:dest-obj}
	\min\limits_{X \in \Rnp} \hspace{2mm} 
	& f(X) :=  \sumiid f_i (X) \\
	\label{opt:dest-con}
	\st \hspace{3.5mm} & \sumiid X\zz M_i X = I_p,
\end{align}
\end{subequations}
where $f_i$ is a continuously differentiable local function,
$M_i \in \Rnn$ is a symmetric matrix,
and $I_p \in \Rpp$ denotes the $p \times p$ identity matrix.
Moreover, for each $i\in [d]$, both $f_i$ and $M_i$ are privately owned by agent $i$.
For convenience, we denote $M := \sum_{i = 1}^d M_i$, which is assumed to be positive definite throughout this paper. 
The feasible region of problem \eqref{opt:dest}, denoted by $\SMnp := \{X \in \Rnp \mid X\zz M X = I_p\}$, 
is an embedded submanifold of $\Rnp$ and commonly referred to as the generalized Stiefel manifold \cite{Absil2008}.
It is noteworthy that, different from classical decentralized optimization problems, the constraint \eqref{opt:dest-con} also has a distributed structure across agents, which leads to considerable challenges to solve \eqref{opt:dest} under the decentralized setting.
Throughout this paper, we make the following blanket assumption.

\begin{assumption} \label{asp:function}
	Each $f_i$ is continuously differentiable, and its gradient $\nabla f_i$ is locally Lipschitz continuous in $\Rn$.
\end{assumption}

Problems of the form \eqref{opt:dest} are found widely in many applications.
Below is a brief introduction to an important application of \eqref{opt:dest} in statistics.

\paragraph{Canonical Correlation Analysis (CCA)}

CCA is a fundamental and ubiquitous statistical tool 
that characterizes linear relationships between two sets of variables \cite{Hotelling1936relations}.
Let $A \in \Rnq$ and $B \in \Rmq$ be two datasets in the form of matrices,
where $n$ and $m$ are the dimensions of the two datasets respectively, 
and $q$ is the number of samples in each of the two datasets.
CCA aims to identify linear combinations of variables within each dataset to maximize their correlations.
Mathematically, CCA can be equivalently formulated as the following optimization problem \cite{Gao2023sparse},
\begin{equation} \label{opt:cca}
\begin{aligned}
	\min\limits_{X \in \bR^{(n + m) \times p}} \hspace{2mm} 
	& -\frac{1}{2} \tr \dkh{X\zz \Sigma X} \\
	\st \hspace{6mm} & X\zz M X = I_p,
\end{aligned}
\end{equation}
where $\Sigma \in \bR^{(n + m) \times (n + m)}$ and $M \in \bR^{(n + m) \times (n + m)}$ are two matrices generated by $A$ and $B$, and $\tr(\cdot)$ denotes the trace of a given square matrix.
Specifically, $\Sigma$ and $M$ have the following form,
\begin{equation*}
	\Sigma = \begin{bmatrix}
		AA\zz & AB\zz \\
		BA\zz & BB\zz
	\end{bmatrix}, \quad
	M = \begin{bmatrix}
		AA\zz & 0 \\
		0 & BB\zz
	\end{bmatrix},
\end{equation*}
respectively.

In this paper, we consider the distributed setting 
in which the samples contained in $A$ and $B$ are stored locally in $d$ locations, 
possibly having been collected and owned by different agents.
Suppose each agent $i$ possess $q_i$ samples and $q_1 + q_2 + \dotsb + q_d = q$.
Let $A_i \in \bR^{n \times q_i}$ and $B_i \in \bR^{m \times q_i}$ denote the local data of agent $i$. 
Then the data matrices $A$ and $B$ can be divided into $d$ blocks respectively,
namely, $A = [A_1 \; A_2 \; \dotsb \; A_d]$ and $B = [B_1 \; B_2 \; \dotsb \; B_d]$.
Under the aforementioned distributed setting, the optimization model \eqref{opt:cca} of CCA can be recast as the following form:
\begin{equation} \label{opt:cca-dist}
\begin{aligned}
	\min\limits_{X \in \bR^{(n + m) \times p}} \hspace{2mm} 
	& -\frac{1}{2} \sumiid \tr \dkh{X\zz \Sigma_i X} \\
	\st \hspace{6mm} & \sumiid X\zz M_i X = I_p,
\end{aligned}
\end{equation}
where $\Sigma_i \in \bR^{(n + m) \times (n + m)}$ and $M_i \in \bR^{(n + m) \times (n + m)}$ are given by
\begin{equation*}
	\Sigma_i = \begin{bmatrix}
		A_iA_i\zz & A_iB_i\zz \\
		B_iA_i\zz & B_iB_i\zz
	\end{bmatrix}, \quad
	M_i = \begin{bmatrix}
		A_iA_i\zz & 0 \\
		0 & B_iB_i\zz
	\end{bmatrix},
\end{equation*}
respectively.
There are other optimization problems in statistics with structures similar to CCA. 
Interested readers can refer to the references \cite{Chen2010coordinate,Gao2023sparse} for further details.

\subsection{Related Works}

Decentralized optimization has experienced significant advancements in recent decades, particularly in the Euclidean space. 
Various algorithms have been proposed to tackle different types of problems, such as gradient-based algorithms \cite{Nedic2009,Xu2015,Qu2017,Song2023optimal}, primal-dual frameworks \cite{Shi2015,Ling2015,Chang2015multi,Hajinezhad2019}, and second-order methods \cite{Bajovic2017newton,Zhang2021newton,Daneshmand2021newton}. 
In general, these algorithms are only capable of handling scenarios where variables are restricted in a convex subset of the Euclidean space. 
Consequently, these algorithms cannot be directly applied to solve \eqref{opt:dest-con}, where the feasible region is typically non-convex. 
Interested readers can refer to some recent surveys \cite{Nedic2018network,Chang2020distributed} for more comprehensive information.

In order to solve decentralized optimization problems on manifolds, many algorithms have adopted geometric tools derived from Riemannian optimization, including tangent spaces and retraction operators. 
For instance, the algorithms delineated in \cite{Chen2021decentralized,Deng2023decentralized} extend the Riemannian gradient descent method \cite{Absil2008} to the decentralized setting, which can be combined with gradient tracking techniques \cite{Sun2022distributed} to achieve the exact convergence.
Building on this foundation, \citet{Chen2023decentralized} further introduces the decentralized Riemannian conjugate gradient method.
Additionally, there are several algorithms devised to address nonsmooth optimization problems, such as subgradient algorithm \cite{Wang2023decentralized} and proximal gradient algorithm \cite{Wang2024decentralized}.
It is crucial to underscore that the computation of tangent spaces and retraction operators requires complete information about the matrix $M$, which is unattainable in a decentralized environment. 
This inherent limitation hinders the application of the aforementioned algorithms to the optimization problems with decentralized generalized orthogonality constraints.

There is another class of methodologies \cite{Wang2022decentralized,Wang2023variance,Wang2023smoothing,Sun2024global} that focuses on employing infeasible approaches, such as augmented Lagrangian methods, to tackle nonconvex manifold constraints.
These algorithms do not require each iterate to strictly adhere to manifold constraints, thereby allowing the pursuit of global consensus directly in the Euclidean space. 
Compared to Riemannian optimization methods, this type of algorithm requires only a single round of communication per iteration to guarantee their global convergence.
However, it is noteworthy that these algorithms are tailored for Stiefel manifolds and cannot handle scenarios where the constraints themselves exhibit a distributed structure.
Although we can draw inspiration from these algorithms to tackle the problem \eqref{opt:dest}, the resulting penalty model remains challenging to solve.
On the one hand, the penalty function usually forfeits the distributed structure inherent in the constraint, which is no longer separable across the agents.
On the other hand, without full knowledge of the matrix $M$, each agent can not independently compute its own local gradients.
Consequently, it is impossible to straightforwardly extend existing algorithms to solve the problem \eqref{opt:dest}.

\subsection{Contributions}

In decentralized optimization, current researches mainly focus on scenarios where the objective function exhibits a distributed structure. 
However, in problem \eqref{opt:dest}, the generalized orthogonal constraint also exhibits a similar structure, which triggers off an enormous difficulty in solving it. 

To develop a decentralized algorithm for the problem \eqref{opt:dest}, we employ the constraint dissolving operator introduced in \cite{Xiao2024dissolving} to construct an exact penalty model, which can not be solved by existing algorithms.
To efficiently minimize our proposed penalty model, we devise an approximate direction for the gradient of the penalty function, which is composed of separable components, including the gradient of the objective function and the Jacobian of the constraint mapping.
Then, we propose to track these two components simultaneously across the network to assemble them in the approximate direction. 
This double-tracking strategy is quite efficient to reach a consensus on the generalized Stiefel manifolds.

Based on the aforementioned techniques, we develop a novel constraint dissolving algorithm with double tracking (CDADT). 
To the best of our knowledge, this is the first algorithm capable of solving optimization problems with decentralized generalized orthogonality constraints.
Under rather mild conditions, we establish the global convergence of CDADT and provide its iteration complexity. 
Preliminary numerical results demonstrate the great potential of CDADT.

\subsection{Organization}

The rest of this paper is organized as follows.
Section \ref{sec:preliminaries} draws into some preliminaries related to the topic of this paper.
In Section \ref{sec:algorithm}, we develop a constraint dissolving algorithm with double tracking to solve the problem \eqref{opt:dest}.  
The convergence properties of the proposed algorithm are investigated in Section \ref{sec:convergence}.
Numerical results are presented in Section \ref{sec:experiments} to evaluate the performance of our algorithm.
Finally, this paper concludes with concluding remarks and key insights in Section \ref{sec:conclusion}.

\section{Preliminaries}

\label{sec:preliminaries}

In this section, we first present fundamental notations and network settings considered in this paper. 
Following this, we revisit the first-order stationarity condition of \eqref{opt:dest} and introduce the concepts of Kurdyka-{\L}ojasiewicz (K{\L}) property and constraint dissolving operator.

\subsection{Notations}

The following notations are adopted throughout this paper.
The Euclidean inner product of two matrices \(Y_1, Y_2\) with the same size is defined as \(\jkh{Y_1, Y_2}=\tr(Y_1\zz Y_2)\), where $\tr (B)$ stands for the trace of a square matrix $B$.
The $p \times p$ identity matrix is represented by $I_p \in \Rpp$.
We define the symmetric part of a square matrix $B$ as $\sym (B) := (B + B\zz) / 2$.
The Frobenius norm and 2-norm of a given matrix \(C\) are denoted by \(\norm{C}\ff\) and \(\norm{C}_2\), respectively. 
The $(i, j)$-th entry of a matrix $C$ is represented by $C (i, j)$.
The notations $\bfone_d \in \bR^d$ and $\bfzero_d \in \bR^d$ stand for the $d$-dimensional vector of all ones and all zeros, respectively.
The notations $\sigma_{\min} (X)$ and $\sigma_{\max} (X)$ represent the smallest and largest singular value of a matrix $X$, respectively.
The Kronecker product is denoted by $\otimes$.
For any $d \in \bN_+$, we denote $[d] := \{1, 2, \dotsc, d\}$.
We define the distance between a point $X$ and a set $\cC$ by $\dist(X, \cC) := \inf\{\norm{Y - X}\ff \mid Y \in \cC\}$.
Given a differentiable function $g(X) : \Rnp \to \bR$, the Euclidean gradient of $g$ with respect to $X$ is represented by $\nabla g(X)$.
Further notations will be introduced wherever they occur.

\subsection{Network Setting}

We assume that the $d$ agents are connected by a communication network.
And they can only exchange information with their immediate neighbors.
The network $\tG = (\tV, \tE)$ captures the communication links diffusing information among the agents.
Here, $\tV = [d]$ is composed of all the agents and $\tE = \{(i, j) \mid i \text{~and~} j \text{~are connected}\}$ represents the set of communication links.
Throughout this paper, we make the following assumptions on the network.

\begin{assumption} \label{asp:network}
	The communication network $\tG = (\tV, \tE)$ is connected.
	Furthermore, there exists a mixing matrix $W = [W(i, j)] \in \bR^{d \times d}$ associated with $\tG$ satisfying the following conditions.
	\begin{enumerate}
		
		\item $W$ is symmetric and nonnegative.
		
		\item $W \mathbf{1}_d = W\zz \mathbf{1}_d = \mathbf{1}_d$.
		
		\item $W(i, j) = 0$ if $i \neq j$ and $(i, j) \notin \tE$, and $W (i, j) > 0$ otherwise.
		
	\end{enumerate}
\end{assumption}

The conditions in Assumption \ref{asp:network} follow from standard assumptions in the literature \cite{Xiao2004,Nedic2018network}, which is dictated by the underlying network topology. 
According to the Perron-Frobenius Theorem \cite{Pillai2005perron}, we find that the eigenvalues of $W$ fall within the range $(-1, 1]$, and hence,
\begin{equation} \label{eq:lambda}
	\lambda := \norm{W - \bfone_d \bfone_d\zz / d}_2 < 1.
\end{equation}
The parameter $\lambda$ serves as a key indicator of the network connectivity and is instrumental in the analysis of decentralized methods.
Generally speaking, the closer $\lambda$ approaches $1$, the poorer the network connectivity becomes.

\subsection{Stationarity}

In this subsection, we delve into the first-order stationarity condition of the problem \eqref{opt:dest}.
Towards this end, we introduce some geometric concepts of Riemannian manifolds.
For each point $X \in \SMnp$, the tangent space to $\SMnp$ at $X$ is referred to as $\cT_{X} := \{D \in \Rnp \mid D\zz M X + X\zz M D = 0\}$. 
In this paper, we consider the Riemannian metric $\jkh{\cdot, \cdot}_M$ on $\cT_{X}$ that is induced from the inner product, i.e., $\jkh{V_1, V_2}_M = \jkh{V_1, M V_2} = \tr(V_1\zz M V_2)$.
The corresponding Riemannian gradient of a smooth function $f$ is given by
\begin{equation*}
	\grad f(X) := M\inv \nabla f(X) - X \sym (X\zz \nabla f(X)),
\end{equation*}
which is nothing but the projection of $\nabla f(X)$ onto $\cT_{X}$ under the metric $\jkh{\cdot, \cdot}_M$.
Finally, the first-order stationarity condition of the problem \eqref{opt:dest} can be stated as follows.

\begin{definition}
	A point $X \in \SMnp$ is called a first-order stationary point of the problem \eqref{opt:dest} if it satisfies the following condition,
	\begin{equation*}
		\grad f(X) = 0.
	\end{equation*}
\end{definition}

Since this paper focuses on infeasible algorithms for the problem \eqref{opt:dest}, we introduce the following definition of $\epsilon$-stationary point.

\begin{definition}
	A point $X \in \Rnp$ is called a first-order $\epsilon$-stationary point of the problem \eqref{opt:dest} if it satisfies the following condition,
	\begin{equation*}
		\max \hkh{\norm{\grad f (\cP (X))}\ff, \, \norm{X\zz M X - I_p}\ff} \leq \epsilon,
	\end{equation*}
	where $\cP (\cdot)$ is the projection operator onto the generalized Stiefel manifold $\SMnp$.
\end{definition}

\subsection{Kurdyka-{\L}ojasiewicz Property}

A part of the convergence results developed in this paper falls in the scope of a general class of functions that satisfy the Kurdyka-{\L}ojasiewicz (K{\L}) property \cite{Lojasiewicz1963propriete,Kurdyka1998gradients}.
Below, we introduce the basic elements to be used in the subsequent theoretical analysis.

For any $\tau > 0$, we denote by $\Phi_\tau$ the class of all concave and continuous functions $\phi:[0, \tau) \to \bR_+$ which satisfy the following conditions,
\begin{enumerate}

    \item $\phi (0) = 0$;

    \item $\phi$ is continuously differentiable on $(0, \tau)$ and continuous at $0$;

    \item $\phi^{\prime} (t) > 0$ for any $t \in (0, \tau)$.
    
\end{enumerate}
Now we define the K{\L} property.

\begin{definition}
    Let $g: \Rn \to (-\infty, +\infty]$ be a proper and lower semicontinuous function and $\partial g$ be the (limiting) subdifferential of $g$.
    \begin{enumerate}
    
        \item The function $g$ is said to satisfy the K{\L} property at $\bar{u} \in \dom (\partial g) := \{u \in \Rn \mid \partial g (u) \neq \emptyset\}$ if there exists a constant $\tau \in (0, +\infty]$, a neighborhood $\cU$ of $\bar{u}$ and a function $\phi \in \Phi_{\tau}$, such that for any $u \in \cU$ satisfying $g (\bar{u}) < g (u) < g (\bar{u}) + \tau$, the following K{\L} inequality holds,
        \begin{equation*}
            \phi^{\prime} (g (u) - g (\bar{u})) \, \dist (0, \partial g (u)) \geq 1.
        \end{equation*}
        The function $\phi$ is called a desingularizing function of $g$ at $\bar{u}$.

        \item We say $g$ is a K{\L} function if $g$ satisfies the K{\L} property at each point of $\dom (\partial g)$.
    \end{enumerate}
\end{definition}

K{\L} functions are ubiquitous in many practical applications, which covers a wealth of nonconvex nonsmooth functions.
For example, tame functions constitutes a wide class of K{\L} functions, including semialgebraic and real subanalytic functions.
We refer interested readers to \cite{Bolte2007lojasiewicz,Attouch2009convergence,Attouch2010proximal,Attouch2013convergence,Bolte2014proximal} for more details.

\subsection{Constraint Dissolving Operator}

The constraint dissolving operator proposed in \cite{Xiao2024dissolving} offers a powerful technique to handle manifold constraints, which can be leveraged to construct an unconstrained penalty model for Riemannian optimization problems.
Specifically, a constraint dissolving operator of the generalized orthogonality constraint \eqref{opt:dest-con} is given by
\begin{equation*}
	\cA(X) := \dfrac{1}{2} X \dkh{3 I_p -  \sumiid X\zz M_i X}.
\end{equation*}
Then solving the problem \eqref{opt:dest} can be converted into the unconstrained minimization of the following penalty function,
\begin{equation} \label{opt:dest-penalty}
	\min\limits_{X \in \Rnp} \hspace{2mm} 
	h(X) :=
	\sumiid f_i\dkh{\cA(X)}
	+ \dfrac{\beta}{4} \norm{\sumiid X\zz M_i X - I_p}\fs,
\end{equation} 
where $\beta > 0$ is a penalty parameter.

\citet{Xiao2024dissolving} have proved that \eqref{opt:dest} and \eqref{opt:dest-penalty} share the same first-order stationary points, second-order stationary points, and local minimizers in a neighborhood of $\SMnp$.
However, it is intractable to solve the problem \eqref{opt:dest-penalty} under the decentralized setting.
It is worth noting that, in the construction of the penalty function $h (X)$, the constraint \eqref{opt:dest-con} is integrated into the original objective function and the quadratic penalty term, resulting in the loss of the separable structure.
To the best of our knowledge, there are currently no algorithms equipped to tackle such problems. 
Therefore, in the next section, we will design a novel algorithm to solve the penalty model under the decentralized setting.

\section{Algorithm Development}

\label{sec:algorithm}

The purpose of this section is to develop an efficient decentralized algorithm to solve the penalty model \eqref{opt:dest-penalty}.
An approximate direction is first constructed for the gradient of the penalty function, which is easier to evaluate under the decentralized setting.
Then, we propose a double-tracking strategy to fabricate the approximate direction across the whole network.
The resulting algorithm is capable of reaching a consensus on the generalized Stiefel manifold.

\subsection{Gradient Approximation}

Under the conditions in Assumption \ref{asp:function}, the penalty function $h (X)$ in \eqref{opt:dest-penalty} is continuously differentiable, the gradient of which takes the following form:
\begin{equation*}
\begin{aligned}
	\nabla h(X) 
	= {} & \dfrac{1}{2} \sumiid \nabla f_i (Z) \mid_{Z = \cA(X)} \dkh{3 I_p - X\zz \sumiid M_i X}
	- \sumiid M_i X \sym\dkh{X\zz \sumiid \nabla f_i (Z) \mid_{Z = \cA(X)}} \\
	& + \beta \sumiid M_i X \dkh{X\zz \sumiid M_i X - I_p}.
\end{aligned}
\end{equation*}
Therefore, each agent $i$ can not compute the local gradient $\nabla f_i (Z) \mid_{Z = \cA (X)}$ individually since the evaluation of $\cA (X)$ requires the accessibility of $\{M_i\}$ over all the agents.
In light of the property that $\norm{\cA (X) - X}\ff = \cO (\norm{X\zz M X - I_p}\ff)$ whenever $X$ is not far away from $\SMnp$, we propose to approximate the local gradient $\nabla f_i (Z) \mid_{Z = \cA (X)}$ by $\nabla f_i (Z) \mid_{Z = X}$.
Hereafter, $\nabla f_i (Z) \mid_{Z = X}$ is denoted by $\nabla f_i (X)$ for simplicity.
As a result, we can obtain the following approximation of $\nabla h (X)$:
\begin{equation} \label{eq:search}
	H(X) =  S(X) + \beta Q(X),
\end{equation}
where
\begin{equation*}
	S(X) 
	= \dfrac{1}{2} \nabla f(X) \dkh{3 I_p - X\zz MX}
	- MX \sym\dkh{X\zz \nabla f(X)},
\end{equation*}
and
\begin{equation*}
	Q(X) = MX \dkh{X\zz MX - I_p}.
\end{equation*}
The approximate direction $H(X)$ of $\nabla h(X)$ possesses the following desirable properties.

\begin{lemma} \label{le:search}
	Let $\cR := \{X \in \Rnp \mid \|X\zz M X - I_p\|\ff \leq 1 / 6\}$
	be a bounded region 
	and $C_g := \sup_{X \in \cR} \norm{\nabla f (X)}\ff$ 
	be a positive constant.
	Then, if $\beta \geq \max\{(12 + 3 \sqrt{42} C_g) \sigma_{\min}^{-1/2} (M) / 5, \sigma_{\min}^{-3/2} (M) L_s^2\}$, we have
	\begin{equation*}
		\norm{H(X)}\fs 
		\geq \dfrac{1}{2} \sigma_{\min}^2 (M) \norm{\grad f(\cP (X))}\fs
		+ \beta \sigma_{\min}^{1/2} (M) \norm{X\zz M X - I_p}\fs,
	\end{equation*}
	for any $X \in \cR$.
\end{lemma}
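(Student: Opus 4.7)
The plan is to start from the identity $3I_p - X\zz MX = 2I_p - E$ with $E := X\zz MX - I_p$, which collapses $S(X)$ into the clean form
\[
S(X) = M\,\grad f(X) - \tfrac{1}{2}\nabla f(X)\,E,
\]
so that
\[
H(X) = M\,\grad f(X) \;+\; \beta MXE \;-\; \tfrac{1}{2}\nabla f(X)\,E.
\]
This splits $H(X)$ into a gradient direction $M\grad f(X)$, a dominant feasibility direction $\beta MXE$, and a mild perturbation $-\tfrac12\nabla f(X) E$ whose Frobenius norm is at most $\tfrac12 C_g\|E\|\ff$ on $\cR$.

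Next I would establish two preparatory estimates on $\cR$. Since $\|E\|\ff \leq 1/6$ forces $X\zz MX \succeq \tfrac{5}{6} I_p$, one obtains $X\zz M^2 X \succeq \tfrac{5}{6}\sigma_{\min}(M)\, I_p$ via simultaneous diagonalization of $M$ and hence the crucial lower bound
\[
\|MXE\|\fs \;\geq\; \tfrac{5}{6}\,\sigma_{\min}(M)\,\|E\|\fs.
\]
Setting $Y := \cP(X)$ and comparing $\cP(X)$ to the explicit competitor $X(X\zz MX)^{-1/2} \in \SMnp$, the optimality of the metric projection combined with the Lipschitz continuity of the matrix square root near $I_p$ yields $\|X - Y\|\ff \leq c_0\,\sigma_{\min}^{-1/2}(M)\|E\|\ff$ for an absolute constant $c_0$. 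Combined with the local Lipschitz constant $L_s$ of $S$ on the bounded region $\cR$, this gives
\[
\|M\grad f(X) - M\grad f(Y)\|\ff \;\leq\; L_s\,\sigma_{\min}^{-1/2}(M)\,\|E\|\ff,
\]
so that $M\grad f(X)$ differs from $M\grad f(Y) = S(Y)$ by a term of order $L_s\sigma_{\min}^{-1/2}(M)\|E\|\ff$.

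The main step expands
\[
\|H(X)\|\fs \;=\; \bigl\|M\grad f(Y) + \Delta_1 + \Delta_2\bigr\|\fs,
\]
with $\Delta_1 := M\grad f(X) - M\grad f(Y)$ and $\Delta_2 := \beta MXE - \tfrac12\nabla f(X) E$, and then applies the Young-type inequality $\|a+b\|\fs \geq (1-\eta)\|a\|\fs + (1 - 1/\eta)\|b\|\fs$ twice. With $\eta = \tfrac12$, the first application isolates $\tfrac12\|M\grad f(Y)\|\fs \geq \tfrac12\sigma_{\min}^2(M)\|\grad f(Y)\|\fs$ from the lumped perturbation $\Delta_1 + \Delta_2$; a second internal application inside $\|\Delta_1 + \Delta_2\|\fs$ uses the dominant $\beta^2\|MXE\|\fs$ reservoir to absorb $\|\Delta_1\|\fs = \cO(L_s^2\sigma_{\min}^{-1}(M)\|E\|\fs)$ and the $C_g\|E\|\ff$-type contribution coming from $-\tfrac12\nabla f(X) E$, while retaining a residual of order $\beta\sigma_{\min}^{1/2}(M)\|E\|\fs$. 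Substituting the lower bound for $\|MXE\|\fs$ and rearranging produces a scalar inequality in $\beta$ whose two binding constraints give exactly the thresholds $(12 + 3\sqrt{42}C_g)\sigma_{\min}^{-1/2}(M)/5$ (from the $C_g$ perturbation versus the surviving linear-in-$\beta$ target) and $\sigma_{\min}^{-3/2}(M)L_s^2$ (from absorbing the Lipschitz perturbation $\Delta_1$ without encroaching on the $\tfrac12\sigma_{\min}^2(M)\|\grad f(Y)\|\fs$ piece).

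The main obstacle will be this last bookkeeping step: one must simultaneously partition the reservoir $\beta^2\|MXE\|\fs$ to offset the $L_s^2/\sigma_{\min}(M)$-type perturbation tied to the gradient piece, offset the $C_g$-type perturbation from the cross terms, and still leave behind the linear-in-$\beta$ residual $\beta\sigma_{\min}^{1/2}(M)\|E\|\fs$ demanded by the statement. Obtaining the exact constants $12 + 3\sqrt{42}C_g$ and $\sigma_{\min}^{-3/2}(M)L_s^2$, rather than mere order-of-magnitude bounds, will require a careful joint tuning of the two Young parameters and of the constant $c_0$ in the projection estimate.
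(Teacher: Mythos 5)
Your algebraic identity $S(X) = M\grad f(X) - \tfrac12\nabla f(X)E$ with $E := X\zz MX - I_p$ is correct and is a clean reformulation, but the core of your argument --- handling the interaction between the gradient piece and the feasibility piece via $\norm{a+b}\fs \geq (1-\eta)\norm{a}\fs + (1-1/\eta)\norm{b}\fs$ --- cannot deliver the claimed bound. Whichever block you designate as $b$ receives a negative coefficient: after your first application with $\eta = 1/2$, the reservoir $\beta^2\norm{MXE}\fs$ sits inside $-\norm{\Delta_1+\Delta_2}\fs$ and enters the final estimate with the wrong sign, and a ``second internal application'' only produces a \emph{lower} bound on $\norm{\Delta_1+\Delta_2}\fs$, which is useless when that quantity is being subtracted. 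If you instead expand the square directly and bound the cross term $2\beta\jkh{M\grad f(X), MXE}$ by Cauchy--Schwarz or Young, you obtain a quantity of order $\beta C_g\norm{E}\ff$ --- \emph{linear} in $\norm{E}\ff$ --- which cannot be absorbed by $\beta^2\norm{E}\fs$ uniformly as $\norm{E}\ff\to 0$ for any fixed $\beta$, nor by $\norm{M\grad f(\cP(X))}\fs$ without destroying that term. Generic norm inequalities cannot keep both target terms alive with positive coefficients.

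The missing idea is that this cross term vanishes to \emph{second} order in $E$: in the $M^{-1}$-weighted inner product one has the exact identity $\jkh{S(X), X(X\zz MX - I_p)} = -\tfrac32\jkh{\sym(X\zz\nabla f(X)), (X\zz MX - I_p)^2}$, so the cross term is $\cO(C_g\norm{E}\fs)$ rather than $\cO(\beta C_g\norm{E}\ff)$. This quadratic decay is precisely what allows a penalty parameter that is merely linear in $C_g$ --- the threshold $(12+3\sqrt{42}\,C_g)\sigma_{\min}^{-1/2}(M)/5$ --- to absorb it while leaving the full $\beta\sigma_{\min}^{1/2}(M)\norm{E}\fs$ intact. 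The paper's proof is built entirely around this computation: it writes $\norm{H(X)}\fs \geq \sigma_{\min}(M)\norm{M^{-1/2}H(X)}\fs$, expands the square exactly, and substitutes the identity for the cross term before estimating anything. Your projection estimate and the use of $L_s$ to pass from $M\inv S(X)$ to $\grad f(\cP(X))$ do match the second half of the paper's argument, so the proposal is repairable --- but only by replacing the Young-inequality bookkeeping with the exact cross-term cancellation.
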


\begin{proof}
	For any $X \in \cR$, we have the inequalities $\sigma_{\max}^2 (M^{1/2} X) \leq 7 / 6$ 
	and $\sigma_{\min}^2 (M^{1/2} X) \geq 5 / 6$, and hence,
	\begin{equation*}
		\norm{M^{1/2} X \dkh{X\zz M X - I_p}}\fs 
		\geq \sigma_{\min}^2 (M^{1/2} X) \norm{X\zz M X - I_p}\fs
		\geq \dfrac{5}{6} \norm{X\zz M X - I_p}\fs.
	\end{equation*}
	Then from the formulation of $S(X)$, it holds that 
	\begin{equation*}
	\begin{aligned}
		& \jkh{S(X), X \dkh{X\zz M X - I_p}} \\
		= {} & \dfrac{1}{2} \jkh{\nabla f (X) \dkh{3 I_p -  X\zz M X}, X \dkh{X\zz M X - I_p}}
		- \jkh{M X \sym \dkh{X\zz \nabla f (X)}, X \dkh{X\zz M X - I_p}} \\
		= {} & \dfrac{1}{2} \jkh{\sym \dkh{X\zz \nabla f (X)}, \dkh{X\zz M X - I_p} \dkh{3 I_p - X\zz M X} - 2 X\zz M X\dkh{X\zz M X - I_p}} \\
		= {} & - \dfrac{3}{2} \jkh{\sym \dkh{X\zz \nabla f (X)}, \dkh{X\zz M X - I_p}^2},
	\end{aligned}
\end{equation*}
	which implies that
	\begin{equation*}
	\begin{aligned}
		\abs{\jkh{S(X), X \dkh{X\zz M X - I_p}}}
		& \leq \dfrac{3}{2} \norm{\sym \dkh{X\zz \nabla f (X)}}\ff 
		\norm{\dkh{X\zz M X - I_p}^2}\ff \\
		& \leq \dfrac{3}{2} \norm{ (M^{1/2} X)\zz M^{-1/2} \nabla f (X)}\ff 
		\norm{\dkh{X\zz M X - I_p}^2}\ff \\
		& \leq \dfrac{\sqrt{42} C_g}{4 \sigma_{\min}^{1/2} (M)} \norm{X\zz M X - I_p}\fs.
	\end{aligned}
	\end{equation*}
	Now it can be readily verifies that
	\begin{equation} \label{eq:normhx}
	\begin{aligned}
		\norm{H (X)}\fs
		\geq {} & \sigma_{\min} (M) \norm{M^{-1/2} H (X)}\fs \\
		= {} & \sigma_{\min} (M) \norm{M^{-1/2} S(X)}\fs 
		+ 2 \beta \sigma_{\min} (M) \jkh{S(X), X \dkh{X\zz M X - I_p}} \\
		& + \beta^2 \sigma_{\min} (M) \norm{M^{1/2} X \dkh{X\zz M X - I_p}}\fs \\
		\geq {} & \sigma_{\min}^2 (M) \norm{M\inv S(X)}\fs 
		+ \dfrac{1}{6} \dkh{5 \beta \sigma_{\min}^{1/2} (M) - 3 \sqrt{42} C_g} \beta \sigma_{\min}^{1/2} (M) \norm{X\zz M X - I_p}\fs \\
		\geq {} & \sigma_{\min}^2 (M) \norm{M\inv S(X)}\fs 
		+ 2 \beta \sigma_{\min}^{1/2} (M) \norm{X\zz M X - I_p}\fs,
	\end{aligned}
	\end{equation}
	where the last inequality follows from the condition $\beta \geq (12 + 3 \sqrt{42} C_g) \sigma_{\min}^{-1/2} (M) / 5$.
	
	Since it holds that $\sigma_{\min}^2 (M^{1/2} X) \geq 5 / 6$, we know that $X\zz M X$ is positive definite and $\cP (X) = X (X\zz M X)^{-1/2}$.
	Then straightforward calculations give rise to that
	\begin{equation*}
		X - \cP (X) = X (X\zz M X)^{-1/2} ((X\zz M X)^{1/2} + I_p)\inv (X\zz M X - I_p),
	\end{equation*}
	which further infers that
	\begin{equation*}
	\begin{aligned}
		\norm{X - \cP (X)}\ff
		\leq \sigma_{\min}^{-1/2} (M) \norm{X\zz M X - I_p}\ff.
	\end{aligned}
	\end{equation*}
	According to the local Lipschitz continuity of $S (X)$, there exists a constant $L_s > 0$ such that
	\begin{equation*}
	\begin{aligned}
		\norm{\grad f(\cP (X)) - M\inv S (X)}\ff
		= {} & \norm{M\inv S(\cP (X)) - M\inv S (X)}\ff \\
		\leq {} & \sigma_{\max} (M\inv) \norm{S(\cP (X)) - S (X)}\ff \\
		\leq {} & \sigma_{\min}\inv (M) L_s \norm{\cP (X) - X}\ff \\
		\leq {} & \sigma_{\min}^{-3/2} (M) L_s \norm{X\zz M X - I_p}\ff.
	\end{aligned}
	\end{equation*}
	Moreover, we have
	\begin{equation*}
	\begin{aligned}
		\norm{\grad f(\cP (X))}\fs
		\leq {} & 2 \norm{\grad f(\cP (X)) - M\inv S (X)}\fs
		+ 2 \norm{M\inv S (X)}\fs \\
		\leq {} & 2 \sigma_{\min}^{-3} (M) L_s^2 \norm{X\zz M X - I_p}\fs
		+ 2 \norm{M\inv S (X)}\fs.
	\end{aligned}
	\end{equation*}
	Combining the above relationship with \eqref{eq:normhx} yields that
	\begin{equation*}
	\begin{aligned}
		\norm{H (X)}\fs
		\geq {} & \dfrac{1}{2} \sigma_{\min}^2 (M) \norm{\grad f(\cP (X))}\fs
		+ \dkh{2 \beta \sigma_{\min}^{1/2} (M) - \sigma_{\min}^{-1} (M) L_s^2} \norm{X\zz M X - I_p}\fs \\
		\geq {} & \dfrac{1}{2} \sigma_{\min}^2 (M) \norm{\grad f(\cP (X))}\fs
		+ \beta \sigma_{\min}^{1/2} (M) \norm{X\zz M X - I_p}\fs,
	\end{aligned}
	\end{equation*}
	where the last inequality follows from the condition $\beta \geq \sigma_{\min}^{-3/2} (M) L_s^2$.
	The proof is completed.
\end{proof}

Lemma \ref{le:search} reveals that the norm of $\grad f (\cP (X))$ and the feasibility violation of $X$ are both controlled by the norm of $H(X)$ as long as $X \in \cR$ and $\beta$ is sufficiently large.
Therefore, as an approximation of $\nabla h(X)$, $H(X)$ can serve as a search direction to solve the problem \eqref{opt:dest-penalty}.

\subsection{Double Tracking Strategy}

In the construction of $H (X)$, each agent $i \in [d]$ is able to compute the local gradient $\nabla f_i (X)$ independently.
However, the evaluation of $H (X)$ still fails to be distributed into $d$ agents since it is not separable.
Nevertheless, we find that the components constituting $H (X)$ exhibit a separable structure as follows,
\begin{equation} \label{eq:track}
	H (X) 
	= \dfrac{d}{2} U (X) \dkh{3 I_p - d X\zz V (X)}
	- d^2 V (X) \sym\dkh{X\zz U (X)}
	+ \beta d V (X) \dkh{d X\zz V (X) - I_p},
\end{equation}
where
\begin{equation*}
	U (X) := \dfrac{1}{d} \sumiid \nabla f_i (X)
	\mbox{~and~}
	V (X) := \dfrac{1}{d} \sumiid M_i X. 
\end{equation*}
This observation inspires us to propose a double-tracking strategy. 
Specifically, we can first track $U (X)$ and $V (X)$ separately across the whole network by resorting to the dynamic average consensus \cite{Zhu2010discrete} protocol.
Then, these two components are collected together to assemble a global estimate of $H (X)$.
It is worth mentioning that $V (X)$ is exactly the Jacobian of the constraint mapping.

\subsection{Algorithm Description}

In this subsection, we describe the proposed algorithm to solve the problem \eqref{opt:dest-penalty}.
Hereafter, the notation $\Xik$ represents the $k$-th iterate of $X_i$.
Our algorithm introduces three auxiliary local variables $\Uik \in \Rnp$, $\Vik \in \Rnp$, and $\Hik \in \Rnp$ for each agent $i$ at the $k$-th iteration.
Specifically, $\Uik$ and $\Vik$ track $\nabla f (\Xik)$ and $M \Xik$ respectively, through the exchange of local information. In addition, $\Hik$ aims at estimating the search direction based on the formulation \eqref{eq:track}.
The key steps of our algorithm from the perspective of each agent are outlined below.

\paragraph{Step 1: Computing Search Direction.}
We first compute an approximate search direction based on \eqref{eq:track} as follows:
\begin{equation} \label{eq:update-h}
\begin{aligned}
	\Hik 
	= {} & \dfrac{d}{2} \Uik \dkh{3 I_p - d (\Xik)\zz \Vik}
	- d^2 \Vik \sym\dkh{(\Xik)\zz \Uik} \\
	& + \beta d \Vik \dkh{d (\Xik)\zz \Vik - I_p}.
\end{aligned}
\end{equation}

\paragraph{Step 2: Mixing Local Information.}
To ensure that the local estimates $X_i$’s asymptotically converge to a common value, we leverage the following consensus protocol.
Given the search directions $\Hik$'s in the previous step, we update the local variable $\Xikn$ by 
\begin{equation} \label{eq:update-x}
	\Xikn = \sumjjd W(i, j) \dkh{\Xjk - \eta \Hjk},
\end{equation}
where $\eta > 0$ is a stepsize.
The above procedure can be realized in a distributed manner.

\paragraph{Step 3: Tracking Gradient and Jacobian.}
Finally, to guarantee that each $\Uik$ and $\Vik$ track the average of $\nabla f_i (\Xik)$ and $M_i \Xik$ respectively, we leverage the dynamic average consensus \cite{Zhu2010discrete} technique.
The resulting gradient and Jacobian tracking schemes read as follows, 
\begin{align}
	\label{eq:update-g}
	& \Uikn = \sumjjd W(i, j) \dkh{\Ujk + \nabla f_j (\Xjkn) - \nabla f_j (\Xjk)}, \\ 
	\label{eq:update-j}
	& \Vikn = \sumjjd W(i, j) \dkh{\Vjk + M_j \Xjkn - M_j\Xjk},
\end{align}
with $U_i^{(0)} = \nabla f_i (X_i^{(0)})$ and $V_i^{(0)} = M_i X_i^{(0)}$.

Then based on the aforementioned steps, we formally present the detailed algorithmic framework in Algorithm \ref{alg:CDADT}, named {\it constraint dissolving algorithm with double tracking} and abbreviated to CDADT.

\begin{algorithm}[ht!]
	\caption{Constraint dissolving algorithm with double tracking (CDADT) for \eqref{opt:dest}.} 
	\label{alg:CDADT}
	
	\KwIn{$X_{\mathrm{init}} \in \Rnp$, , $\beta > 0$, and $\eta > 0$.}
	
	Set $k := 0$.
	
	\For{$\iid$}{
		
		Initialize $\Xik := X_{\mathrm{init}}$,
		$\Uik := \nabla f_i (X_{\mathrm{init}})$,
		and $\Vik := M_i X_{\mathrm{init}}$.
		
	}
	
	\While{``not converged''}
	{
		
		\For{$\iid$}{
			
			Compute $\Hik$ by \eqref{eq:update-h}.
			
			Update $\Xikn$ by \eqref{eq:update-x}.
			
			Update $\Uikn$ and $\Vikn$ by \eqref{eq:update-h} and \eqref{eq:update-j}, respectively.
			
		}
		
		Set $k := k + 1$.
		
	}
	
	\KwOut{$\{\Xik\}_{i = 1}^d$.}
	
\end{algorithm}


In the rest of this subsection, we exhibit the compact form of Algorithm \ref{alg:CDADT}. For the sake of convenience, 
we denote $J = \bfone_d \bfone_d\zz / d \in \bR^{d \times d}$,
$\bfJ = J \otimes I_n \in \bR^{dn \times dn}$, 
$\bfE = \bfone_d \otimes I_n \in \bR^{dn \times n}$, 
and $\bfW = W \otimes I_n \in \bR^{dn \times dn}$.
It can be readily verified that $(\bfW - \bfJ) \bfJ = 0$.
The following notations are also used in the sequel.
\begin{itemize}
	
	\item $\bfXk = [(X_{1}^{(k)})\zz, \dotsc, (X_{d}^{(k)})\zz]\zz$,  
	$\barXk = \bfE\zz \bfXk / d$,
	$\avXk = \bfE \barXk = \bfJ \bfXk$.
	
	\item $\bfUk = [(U_{1}^{(k)})\zz, \dotsc, (U_{d}^{(k)})\zz]\zz$,
	$\barUk = \bfE\zz \bfUk / d$,
	$\avUk = \bfE \barUk = \bfJ \bfUk$.
	
	\item $\bfVk = [(V_{1}^{(k)})\zz, \dotsc, (V_{d}^{(k)})\zz]\zz$,
	$\barVk = \bfE\zz \bfVk / d$,
	$\avVk = \bfE \barVk = \bfJ \bfVk$.
	
	\item $\bfHk = [(H_{1}^{(k)})\zz, \dotsc, (H_{d}^{(k)})\zz]\zz$,
	$\barHk = \bfE\zz \bfHk / d$,
	$\avHk = \bfE \barHk = \bfJ \bfHk$.
	
	\item $\bfGk = [(\nabla f_1(X_1^{(k)}))\zz, \dotsc, (\nabla f_d(X_d^{(k)}))]\zz$,
	$\barGk = \bfE\zz \bfGk / d$,
	$\avGk = \bfE \barGk = \bfJ \bfGk$.
	
	\item $\bfDk = [(M_1 X_{1}^{(k)})\zz, \dotsc, (M_d X_{d}^{(k)})\zz]\zz$,
	$\barDk = \bfE\zz \bfDk / d$,
	$\avDk = \bfE \barDk = \bfJ \bfDk$.
	
\end{itemize}
By the formulation of the above notations, the main iteration loop of Algorithm \ref{alg:CDADT} 
can be summarized in the following compact form.
\begin{equation*}
\left\{
\begin{aligned}
	\bfXkn & = \bfW (\bfXk - \eta \bfHk), \\
	\bfUkn & = \bfW (\bfUk + \bfGkn - \bfGk), \\
	\bfVkn & = \bfW (\bfVk + \bfDkn - \bfDk).
\end{aligned}
\right.
\end{equation*}
Moreover, it is not difficult to check that, for any $k \in \bN$,
the following relationships hold.
\begin{equation}\label{eq:avg-iter}
	\barXkn = \barXk - \eta \barHk,~ 
	\barUk = \barGk,~
	\mbox{and~} \barVk = \barDk.
\end{equation}

\section{Convergence Analysis}

\label{sec:convergence}

In this section, we present the convergence analysis of Algorithm \ref{alg:CDADT}.
The global convergence guarantee is rigorously established under rather mild conditions, together with an iteration complexity.

\subsection{Consensus and Tracking Errors}

This subsection is devoted to building the upper bound of consensus errors and tracking errors.
We start from the consensus error $\|\bfXkn - \avXkn\|\ff$ in the following lemma.

\begin{lemma} \label{le:consensus-error}
	Suppose the conditions in Assumption \ref{asp:network} hold.
	Then for any $k \in \bN$, it holds that
	\begin{equation*}
		\norm{\bfXkn - \avXkn}\fs
		\leq \dfrac{1 + \lambda^2}{2} \norm{\bfXk - \avXk}\fs
		+ \eta^2 C_1 \norm{\bfHk}\fs,
	\end{equation*}
	where $C_1 = \lambda^2 \dkh{1 + \lambda^2} / (1 - \lambda^2) > 0$ is a constant.
\end{lemma}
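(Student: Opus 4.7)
The plan is to isolate the contraction of the consensus gap through the mixing matrix and then balance cross-terms with Young's inequality. First I would use the update $\bfXkn = \bfW(\bfXk - \eta \bfHk)$ together with the identity $\bfJ \bfW = \bfW \bfJ = \bfJ$, which follows from the fact that $W$ is doubly stochastic under Assumption \ref{asp:network}. This yields
\begin{equation*}
\bfXkn - \avXkn = (\bfW - \bfJ)(\bfXk - \eta \bfHk).
\end{equation*}
Since $(\bfW - \bfJ)\bfJ = 0$, I can subtract $(\bfW - \bfJ)\avXk = 0$ from the right-hand side without changing anything, obtaining
\begin{equation*}
\bfXkn - \avXkn = (\bfW - \bfJ)\bigl((\bfXk - \avXk) - \eta \bfHk\bigr).
\end{equation*}

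Next I would use the fact that $\norm{\bfW - \bfJ}_2 = \lambda$ by \eqref{eq:lambda}, so taking the squared Frobenius norm of both sides yields
\begin{equation*}
\norm{\bfXkn - \avXkn}\fs \leq \lambda^2 \norm{(\bfXk - \avXk) - \eta \bfHk}\fs.
\end{equation*}
Then I would apply the Young-type inequality $\norm{A - B}\fs \leq (1+\alpha)\norm{A}\fs + (1 + 1/\alpha)\norm{B}\fs$ for an appropriately chosen $\alpha > 0$ to split the cross-term between $\bfXk - \avXk$ and $\eta \bfHk$.

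The only nonroutine part is choosing $\alpha$ so that the coefficient in front of $\norm{\bfXk - \avXk}\fs$ matches $(1+\lambda^2)/2$; setting $\lambda^2(1+\alpha) = (1+\lambda^2)/2$ gives $\alpha = (1-\lambda^2)/(2\lambda^2)$, and a direct computation of $\lambda^2(1 + 1/\alpha)$ then yields exactly $C_1 = \lambda^2(1+\lambda^2)/(1-\lambda^2)$, which matches the claimed constant. I do not foresee any real obstacle beyond this algebraic bookkeeping, since the contraction property of $\bfW - \bfJ$ does all the heavy lifting and no property of $\bfHk$ is needed beyond that it appears linearly in the update.
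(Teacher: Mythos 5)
Your proposal is correct and follows essentially the same route as the paper: both arguments reduce $\bfXkn - \avXkn$ to $(\bfW - \bfJ)\bigl((\bfXk - \avXk) - \eta\bfHk\bigr)$ using the double stochasticity of $W$, and both split the two terms via Young's inequality with the same parameter $(1-\lambda^2)/(2\lambda^2)$ and the bound $\norm{\bfW - \bfJ}_2 = \lambda$. The only cosmetic difference is that the paper applies Young's inequality before extracting the factor $\lambda^2$, whereas you extract $\lambda^2$ first; the resulting constants are identical.
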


\begin{proof}
	By the update scheme in \eqref{eq:update-x} and straightforward calculations, we can attain that
	\begin{equation*}
		\begin{aligned}
			\norm{\bfXkn - \avXkn}\fs
			= & \norm{(\bfW - \bfJ) (\bfXk - \avXk) - \eta (\bfW - \bfJ) \bfHk}\fs \\
			\leq & \dkh{1 + \gamma} \norm{(\bfW - \bfJ) (\bfXk - \avXk)}\fs 
			+ \eta^2 \dkh{1 + 1 / \gamma} \norm{(\bfW - \bfJ) \bfHk}\fs \\
			\leq & \lambda^2 \dkh{1 + \gamma} \norm{\bfXk - \avXk}\fs
			+ \eta^2 \lambda^2 \dkh{1 + 1 / \gamma} \norm{\bfHk}\fs,
		\end{aligned}
	\end{equation*}
	where $\gamma = (1 - \lambda^2)/ (2 \lambda^2) > 0$ is a constant.
	This completes the proof 
	since $\lambda^2 \dkh{1 + \gamma} =  (1 + \lambda^2) / 2$
	and $\lambda^2 \dkh{1 + 1 / \gamma} = C_1$.
\end{proof}

Next, we proceed to bound the tracking errors $\|\bfUkn - \avUkn\|\ff$ and $\|\bfVkn - \avVkn\|\ff$.
To facilitate the narrative, we define the bounded region $\cB = \{X \in \Rnp \mid \|X\|\ff \leq C_x\}$, where $C_x = \sqrt{d} (1 +  \tilde{C}_x) > 0$ is a constant with $\tilde{C}_x = \sup \hkh{\|X\|\ff \mid X \in \cR} > 0$.

\begin{lemma} \label{le:tracking-error}
	Suppose the conditions in Assumptions \ref{asp:function} and \ref{asp:network} hold, $\Xikn \in \cB$ and $\Xik \in \cB$ for any $\iid$.
	Then we have
	\begin{equation*}
		\norm{\bfUkn - \avUkn}\fs
		\leq \dfrac{1 + \lambda^2}{2} \norm{\bfUk - \avUk}\fs
		+ 8 L_c^2 C_1 \norm{\bfXk - \avXk}\fs
		+ 2 \eta^2 L_c^2 C_1 \norm{\bfHk}\fs,
	\end{equation*}
	and
	\begin{equation*}
		\norm{\bfVkn - \avVkn}\fs
		\leq \dfrac{1 + \lambda^2}{2} \norm{\bfVk - \avVk}\fs
		+ 8 L_c^2 C_1 \norm{\bfXk - \avXk}\fs
		+ 2 \eta^2 L_c^2 C_1 \norm{\bfHk}\fs,
	\end{equation*}
	where $L_c = \max\{L_g, L_m\} > 0$ is a constant with $ L_m = \sup \hkh{\sigma_{\max}(M_i) \mid \iid} > 0$ and $L_g = \sup \hkh{\norm{\nabla f_i(X) - \nabla f_i(Y)}\ff / \norm{X - Y}\ff \mid X \neq Y, X \in \cB, Y \in \cB, \iid} > 0$.
\end{lemma}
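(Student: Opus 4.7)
The plan is to mimic the recursive estimate used in the proof of Lemma~4.1 for the consensus error $\|\bfXkn - \avXkn\|\ff$, introducing one additional step to handle the gradient (resp.\ Jacobian) difference produced by the dynamic average consensus update. First, I would use the tracking recursion $\bfUkn = \bfW(\bfUk + \bfGkn - \bfGk)$ together with $\bfJ\bfW = \bfJ$ to write $\avUkn = \avUk + \avGkn - \avGk$, and then, invoking the identity $(\bfW - \bfJ)\bfJ = 0$ mentioned in the paper, express
\begin{equation*}
    \bfUkn - \avUkn = (\bfW - \bfJ)(\bfUk - \avUk) + (\bfW - \bfJ)(\bfGkn - \bfGk).
\end{equation*}

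Next, I would apply Young's inequality with the same parameter $\gamma = (1-\lambda^2)/(2\lambda^2)$ used in Lemma~4.1 and exploit $\|\bfW - \bfJ\|_2 = \lambda$ to arrive at
\begin{equation*}
    \norm{\bfUkn - \avUkn}\fs \leq \frac{1 + \lambda^2}{2}\norm{\bfUk - \avUk}\fs + C_1 \norm{\bfGkn - \bfGk}\fs.
\end{equation*}
Since both $\Xikn$ and $\Xik$ lie in the bounded region $\cB$, each $\nabla f_i$ is $L_g$-Lipschitz on $\cB$, which yields $\norm{\bfGkn - \bfGk}\fs \leq L_g^2 \norm{\bfXkn - \bfXk}\fs \leq L_c^2 \norm{\bfXkn - \bfXk}\fs$. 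The remaining task is thus to control $\norm{\bfXkn - \bfXk}\fs$ by quantities already appearing on the right-hand side. Using the $X$-update and $\bfW \avXk = \avXk$, I would write $\bfXkn - \bfXk = (\bfW - I)(\bfXk - \avXk) - \eta \bfW \bfHk$, and then apply the elementary inequality $(a + b)^2 \leq 2 a^2 + 2 b^2$ together with $\|\bfW - I\|_2 \leq 2$ and $\|\bfW\|_2 \leq 1$ to obtain $\norm{\bfXkn - \bfXk}\fs \leq 8 \norm{\bfXk - \avXk}\fs + 2 \eta^2 \norm{\bfHk}\fs$. Substituting this back gives exactly the claimed bound. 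The argument for $\bfVkn - \avVkn$ is entirely parallel: $M_j \Xjkn - M_j \Xjk$ plays the role of $\nabla f_j(\Xjkn) - \nabla f_j(\Xjk)$, and the Lipschitz constant is $\sigma_{\max}(M_j) \leq L_m \leq L_c$, so the same three-step estimate applies verbatim.

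I do not foresee any deep obstacle in this proof; the only subtlety is recognizing that the tracking error cannot be bounded on its own but only as a mixed recursion involving both the consensus error $\|\bfXk - \avXk\|\ff$ and the direction norm $\|\bfHk\|\ff$. This cross-coupling is unavoidable because the tracking update injects the increment $\bfGkn - \bfGk$, whose size is tied to how much the iterates have moved in the preceding step. Lemma~4.1 already produces an analogous recursion, so the three resulting inequalities (consensus, gradient tracking, Jacobian tracking) share the same Lyapunov structure and can later be aggregated into a single descent estimate.
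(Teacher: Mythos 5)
Your proposal is correct and follows essentially the same route as the paper's own proof: the same decomposition $\bfUkn - \avUkn = (\bfW - \bfJ)(\bfUk - \avUk) + (\bfW - \bfJ)(\bfGkn - \bfGk)$, the same Young's inequality with $\gamma = (1-\lambda^2)/(2\lambda^2)$, the same Lipschitz bound on $\bfGkn - \bfGk$, and the same identity $\bfXkn - \bfXk = (\bfW - I_{dn})(\bfXk - \avXk) - \eta\bfW\bfHk$ yielding the factor $8\norm{\bfXk - \avXk}\fs + 2\eta^2\norm{\bfHk}\fs$. No gaps.
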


\begin{proof}
	To begin with, straightforward  manipulations lead to that
	\begin{equation*}
		\begin{aligned}
			\norm{\bfUkn - \avUkn}\fs
			& = \norm{(\bfW - \bfJ) (\bfUk- \avUk) + (\bfW - \bfJ) (\bfGkn - \bfGk)}\fs \\
			& \leq \dkh{1 + \gamma} \norm{(\bfW - \bfJ) (\bfUk - \avUk)}\fs 
			+ \dkh{1 + 1 / \gamma} \norm{(\bfW - \bfJ) (\bfGkn - \bfGk)}\fs \\
			& \leq \lambda^2 \dkh{1 + \gamma} \norm{\bfUk - \avUk}\fs
			+ \lambda^2 \dkh{1 + 1 / \gamma} \norm{\bfGkn - \bfGk}\fs,
		\end{aligned}
	\end{equation*}
	where $\gamma = (1 - \lambda^2)/ (2 \lambda^2) > 0$ is a constant.
	According to the local Lipschitz continuity of $\nabla f_i$, it follows that
	\begin{equation*}
		\norm{\bfGkn - \bfGk}\ff \leq L_g \norm{\bfXkn - \bfXk}\ff.
	\end{equation*}
	Moreover, it can be readily verified that
    \begin{equation} \label{eq:xk+1-xk}
        \bfXkn - \bfXk
        = \bfW (\bfXk - \eta \bfHk) - \bfXk
        = (\bfW - I_{dn}) (\bfXk - \avXk) - \eta \bfW \bfHk,
	\end{equation}
    which implies that
	\begin{equation*}
        \norm{\bfXkn - \bfXk}\fs
        \leq 8 \norm{\bfXk - \avXk}\fs + 2 \eta^2 \norm{\bfHk}\fs.
	\end{equation*}
	Combining the above three relationships, we can obtain the bound for $\|\bfUkn - \avUkn\|\ff$. Furthermore, the bound for $\|\bfVkn - \avVkn\|\ff$ can be obtained by using the same technique, hence its proof is omitted for simplicity.  
\end{proof}

The following lemma demonstrates that $d \Uik$ and $d \Vik$ are estimates of $\nabla f (\barXk)$ and $M \barXk$ for each agent $i$, respectively.

\begin{lemma} \label{le:bar}
	Suppose that $\Xik \in \cB$ for any $\iid$.
	Then, under the conditions in Assumptions \ref{asp:function} and \ref{asp:network}, the following two inequalities hold 
	\begin{equation*}
		\norm{d \Uik - \nabla f (\barXk)}\fs 
		\leq 2 d^2 \norm{\Uik - \barUk}\fs 
		+ 2 d L_c^2 \norm{\bfXk - \avXk}\fs,
	\end{equation*}
	and
	\begin{equation*}
		\norm{d \Vik - M \barXk}\fs
		\leq 2 d^2 \norm{\Vik - \barVk}\fs 
		+ 2 d L_c^2 \norm{\bfXk - \avXk}\fs.
	\end{equation*}
\end{lemma}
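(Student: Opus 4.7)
The plan is to prove both inequalities by the same two-step decomposition. For the first inequality, I would start from the averaging identity $\barUk = \barGk$ established in \eqref{eq:avg-iter}, which gives $d\barUk = \sumiid \nabla f_i(\Xik)$. Combined with $\nabla f(\barXk) = \sumiid \nabla f_i(\barXk)$, this allows me to write the telescoping decomposition
\begin{equation*}
d\Uik - \nabla f(\barXk) = d\bigl(\Uik - \barUk\bigr) + \sumiid \bigl(\nabla f_i(\Xik) - \nabla f_i(\barXk)\bigr).
\end{equation*}

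Next I would square both sides and apply the standard inequality $\|a+b\|\fs \leq 2\|a\|\fs + 2\|b\|\fs$, producing the leading $2d^2\|\Uik - \barUk\|\fs$ term immediately. For the remaining cross term, I would apply Cauchy--Schwarz (or Jensen on the squared norm) to pull the sum out:
\begin{equation*}
\Bigl\|\sumiid \bigl(\nabla f_i(\Xik) - \nabla f_i(\barXk)\bigr)\Bigr\|\fs \leq d \sumiid \bigl\|\nabla f_i(\Xik) - \nabla f_i(\barXk)\bigr\|\fs.
\end{equation*}
Since $\Xik \in \cB$ for every $i$ and $\barXk$ is a convex average of the $\Xik$'s (hence also in $\cB$), the local Lipschitz constant $L_g$ on $\cB$ applies, yielding the bound $d L_g^2 \sumiid \|\Xik - \barXk\|\fs = d L_g^2 \|\bfXk - \avXk\|\fs$, where the last equality uses the definition $\avXk = \bfE\barXk$, so its $i$-th block is exactly $\barXk$. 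Bounding $L_g \leq L_c$ finishes the first inequality.

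For the second inequality, the proof is essentially identical: using $\barVk = \barDk = \tfrac{1}{d}\sumiid M_i \Xik$ from \eqref{eq:avg-iter}, together with $M\barXk = \sumiid M_i\barXk$, I decompose
\begin{equation*}
d\Vik - M\barXk = d\bigl(\Vik - \barVk\bigr) + \sumiid M_i\bigl(\Xik - \barXk\bigr),
\end{equation*}
then apply the same $2\|a\|\fs + 2\|b\|\fs$ split and Cauchy--Schwarz. Here the Lipschitz factor is replaced by $\sigma_{\max}(M_i) \leq L_m \leq L_c$, which controls $\|M_i(\Xik - \barXk)\|\ff$, and the claim follows.

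This proof is almost entirely routine algebra once the decomposition via the averaging identities \eqref{eq:avg-iter} is identified. There is no real obstacle beyond ensuring the constants line up: I just need to be careful that the ``$d$'' appearing inside the Cauchy--Schwarz bound combines cleanly with the $2$ from Young's inequality to produce the claimed $2dL_c^2$ coefficient, and to note that $\barXk \in \cB$ so that the Lipschitz constant $L_g$ on $\cB$ is applicable at both $\Xik$ and $\barXk$.
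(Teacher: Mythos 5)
Your proposal is correct and follows essentially the same route as the paper's proof: the identity $\barUk = \barGk$ from \eqref{eq:avg-iter}, the split $d\Uik - \nabla f(\barXk) = d(\Uik - \barUk) + (d\barUk - \nabla f(\barXk))$, Young's inequality, Cauchy--Schwarz on the sum, and the local Lipschitz bound, with the same handling of $\Vik$ by symmetry. Your added remark that $\barXk \in \cB$ by convexity of the ball is a point the paper leaves implicit but is indeed needed for the Lipschitz constant $L_g$ to apply.
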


\begin{proof}
	According to the local Lipschitz continuity of $\nabla f_i$, it follows that
	\begin{equation*}
	\begin{aligned}
		\norm{d \barUk - \nabla f (\barXk)}\fs
		= {} & \norm{d \barGk - \nabla f (\barXk)}\fs
		= \norm{\sumiid (\nabla f_i (\Xik) - \nabla f_i (\barXk))}\fs \\		
		\leq {} & d \sumiid \norm{\nabla f_i (\Xik) - \nabla f_i (\barXk)}\fs
		\leq d L_g^2 \norm{\bfXk - \avXk}\fs,
	\end{aligned}
	\end{equation*}
	which further yields that
	\begin{equation*}
	\begin{aligned}
		\norm{d \Uik - \nabla f (\barXk)}\fs 
		= {} & \norm{d \Uik - d \barUk + d \barUk - \nabla f (\barXk)}\fs \\
		\leq {} & 2 d^2 \norm{\Uik - \barUk}\fs + 2 \norm{d \barUk - \nabla f (\barXk)}\fs \\
		\leq {} & 2 d^2 \norm{\Uik - \barUk}\fs + 2 d L_g^2 \norm{\bfXk - \avXk}\fs.
	\end{aligned}
	\end{equation*}
	Hence, we can conclude that the first assertion of this lemma holds.
	The second assertion can be proved by using a similar argument.
\end{proof}

We conclude this subsection by showing that $\barHk$ is an approximation of $H (\barXk)$ with the approximation error controlled by the consensus and tracking errors.
For convenience, we denote two constants $C_u = \sqrt{d} (1 + C_g) > 0$ and $C_v = \sqrt{d} (1 + C_x L_m) > 0$ to be used in the following lemma.

\begin{lemma} \label{le:hxbar}
	Let the conditions in Assumptions \ref{asp:function} and \ref{asp:network} hold.
	Suppose that $\|\Xik\|\ff \leq C_x$, $\|\Uik\|\ff \leq C_u$, and $\|\Vik\|\ff \leq C_v$ for any $\iid$.
	Then we have
	\begin{equation*}
	\begin{aligned}
		\norm{\barHk - H (\barXk)}\fs 
		\leq {} & \dfrac{C_2}{d} \norm{\bfUk - \avUk}\fs
		+ \dfrac{C_2 + C_3 \beta^2}{d} \norm{\bfVk - \avVk}\fs \\
		& + \dfrac{C_2 + C_3 \beta^2}{d} \norm{\bfXk - \avXk}\fs,
	\end{aligned}
	\end{equation*}
	where $C_2 > 0$ and $C_3 > 0$ are two constants.
\end{lemma}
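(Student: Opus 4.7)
My plan is to split the error via an intermediate quantity that matches the cubic polynomial form of $\Hik$ evaluated at the averages $\barXk, \barUk, \barVk$. Specifically, I would introduce the auxiliary quantity $\tilde{H}^k := \frac{d}{2}\barUk(3I_p - d\barXk\zz \barVk) - d^2 \barVk\,\sym(\barXk\zz \barUk) + \beta d\barVk(d\barXk\zz \barVk - I_p)$, and use the decomposition $\barHk - H(\barXk) = (\barHk - \tilde{H}^k) + (\tilde{H}^k - H(\barXk))$, bounding each piece separately. The point of the splitting is that $\tilde H^k$ agrees with $H(\barXk)$ if one substitutes $d\barUk \mapsto \nabla f(\barXk)$ and $d\barVk \mapsto M\barXk$, so the second piece is controlled by the estimates already developed for Lemma \ref{le:bar}.

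For the first piece $\barHk - \tilde H^k = \frac{1}{d}\sum_i(\Hik - \tilde H^k)$, I would expand each summand as a polynomial of total degree three in the deviations $\Xik - \barXk$, $\Uik - \barUk$, $\Vik - \barVk$, with coefficients depending only on $\barXk, \barUk, \barVk$. After averaging over $i \in [d]$, every monomial that is linear in a deviation vanishes, because $\sum_i (\Xik - \barXk) = 0$ and similarly for $U$ and $V$. What remains are quadratic and cubic monomials in the deviations. Using the uniform bounds $\|\Xik\|\ff \leq C_x$, $\|\Uik\|\ff \leq C_u$, $\|\Vik\|\ff \leq C_v$ (which also imply the same bounds on $\barXk,\barUk,\barVk$), Frobenius submultiplicativity, and $2ab \leq a^2 + b^2$, one arrives at an estimate of the form $\|\barHk - \tilde H^k\|\fs \leq (C/d)\big(\|\bfXk - \avXk\|\fs + \|\bfUk - \avUk\|\fs + (1+\beta^2)\|\bfVk - \avVk\|\fs\big)$, where the $\beta^2$ factor stems from the penalty term $\beta d\Vik(d(\Xik)\zz\Vik - I_p)$.

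For the second piece, the proof of Lemma \ref{le:bar} already furnishes $\|d\barUk - \nabla f(\barXk)\|\fs \leq dL_g^2\|\bfXk - \avXk\|\fs$ and, by the same argument with $M_i$ in place of $\nabla f_i$, $\|d\barVk - M\barXk\|\fs \leq dL_m^2\|\bfXk - \avXk\|\fs$. Writing $\tilde H^k - H(\barXk)$ as a sum of products in which exactly one factor is $d\barUk - \nabla f(\barXk)$ or $d\barVk - M\barXk$ and the remaining factors are bounded (by $C_x, C_u, C_v$ and quantities derived from them), Cauchy--Schwarz yields $\|\tilde H^k - H(\barXk)\|\fs \leq (C'/d)(1+\beta^2)\|\bfXk - \avXk\|\fs$. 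Combining the two estimates and collecting $\beta$-independent contributions into $C_2$ and $\beta^2$-factors into $C_3$ gives the claimed inequality.

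The main obstacle is the bookkeeping in the first step: the cubic polynomial $\Hik - \tilde H^k$ expands into a substantial number of monomials, and one must verify carefully which terms vanish upon averaging and then crudely but cleanly bound each surviving quadratic or cubic term. Fortunately, no sharp constants are sought, so repeated use of Frobenius submultiplicativity together with $ab \leq (a^2 + b^2)/2$ suffices, and the computation is tedious but entirely mechanical.
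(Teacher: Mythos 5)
Your proposal is correct, but it organizes the estimate differently from the paper. The paper applies Jensen's inequality first, reducing the task to bounding each local error $\norm{\Hik - H(\barXk)}\fs$ via $\norm{\barHk - H(\barXk)}\fs \leq \frac{1}{d}\sumiid\norm{\Hik - H(\barXk)}\fs$, and then telescopes each of the three products constituting $\Hik - H(\barXk)$ directly against $H(\barXk)$, obtaining bounds in terms of $\norm{d\Uik - \nabla f(\barXk)}\fs$, $\norm{d\Vik - M\barXk}\fs$, and $\norm{\Xik - \barXk}\fs$ before invoking Lemma \ref{le:bar}; there is no intermediate point and no use of cancellation. Your route inserts the auxiliary quantity $\tilde{H}^{(k)}$ evaluated at $(\barXk, d\barUk, d\barVk)$ and exploits the exact vanishing of the first-order terms of the cubic polynomial upon averaging, since $\sumiid(\Xik - \barXk) = 0$ and likewise for $U$ and $V$. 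That cancellation is a genuine structural observation the paper does not use, but it does not improve the final bound: the second piece $\tilde{H}^{(k)} - H(\barXk)$ still contributes a term linear in $\norm{\bfXk - \avXk}\fs$ through the Lemma \ref{le:bar} estimates, so both arguments land on the same form of inequality with the same $\beta^2$ bookkeeping ($\beta$-free contributions into $C_2$, penalty-term contributions into $C_3$). The trade-off is that the paper's version avoids expanding the cubic into deviation monomials and is more mechanical, while yours requires the (correct but heavier) combinatorial expansion; the surviving quadratic and cubic monomials must then be reduced back to squared deviation norms using the boundedness hypotheses, exactly as you indicate.
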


\begin{proof}
	To begin with, we have
	\begin{equation*}
	\begin{aligned}
		\norm{\Hik - H(\barXk)}\fs
		\leq {} & \dfrac{3}{4} \norm{d \Uik (3 I_p - d (\Xik)\zz \Vik) - \nabla f(\barXk) (3 I_p - (\barXk)\zz M \barXk)}\fs \\
		& + 3 \norm{d^2 \Vik \sym ((\Xik)\zz \Uik) - M \barXk \sym ((\barXk)\zz \nabla f(\barXk)}\fs \\
		& + 3 \beta^2 \norm{d \Vik (d (\Xik)\zz \Vik - I_p) - M \barXk ((\barXk)\zz M \barXk - I_p)}\fs.
	\end{aligned}
	\end{equation*}
	By straightforward calculations, we can obtain that
	\begin{equation*}
	\begin{aligned}
		& \norm{d \Uik (3 I_p - d (\Xik)\zz \Vik) - \nabla f(\barXk) (3 I_p - (\barXk)\zz M \barXk)}\fs \\
		\leq {} & 3 \norm{(d \Uik - \nabla f (\barXk)) (3 I_p - d (\Xik)\zz \Vik)}\fs
		+ 3 \norm{\nabla f (\barXk) (\barXk - \Xik)\zz M \barXk}\fs \\
		& + 3 \norm{\nabla f (\barXk) (\Xik)\zz (M \barXk - d \Vik)}\fs \\
		\leq {} & 3 (18 p + 2 d^2 C_x^2 C_v^2) \norm{d \Uik - \nabla f (\barXk)}\fs 
		+ 3 \sigma_{\max}^2 (M) C_x^2 C_g^2 \norm{\Xik - \barXk}\fs \\
		& + 3 C_x^2 C_g^2\norm{d \Vik - M \barXk}\fs.
	\end{aligned}
	\end{equation*}
	And it can be readily verified that
	\begin{equation*}
	\begin{aligned}
		& \norm{d^2 \Vik \sym ((\Xik)\zz \Uik) - M \barXk \sym ((\barXk)\zz \nabla f(\barXk)}\fs \\
		\leq {} & 3 d^2 \norm{(d \Vik - M \barXk) (\Xik)\zz \Uik}\fs 
		+ 3 d^2 \norm{M \barXk (\Xik - \barXk)\zz \Uik}\fs \\
		& + 3 \norm{M \barXk (\barXk)\zz (d \Uik - \nabla f (\barXk))}\fs \\ 
		\leq {} & 3 d^2 C_x^2 C_u^2 \norm{d \Vik - M \barXk}\fs 
		+ 3 d^2 \sigma_{\max}^2 (M) C_x^2 C
		_u^2 \norm{\Xik - \barXk}\fs \\
		& + 3 \sigma_{\max}^2 (M) C_x^4 \norm{d \Uik - \nabla f (\barXk)}\fs.
	\end{aligned}
	\end{equation*}
	Moreover, we have
	\begin{equation*}
	\begin{aligned}
		& \norm{d \Vik (d (\Xik)\zz \Vik - I_p) - M \barXk ((\barXk)\zz M \barXk - I_p)}\fs \\
		\leq {} & 3 \norm{(d \Vik - M \barXk) (d (\Xik)\zz \Vik - I_p)}\fs 
		+ 3 \norm{M \barXk (\Xik)\zz (d \Vik - M \barXk)}\fs \\
		& + 3 \norm{M \barXk (\Xik - \barXk)\zz M \barXk}\fs \\
		\leq {} & 3 (2p + 2 d^2 C_x^2 C_v^2) \norm{d \Vik - M \barXk}\fs 
		+ 3 \sigma_{\max}^2 (M) C_x^4 \norm{d \Vik - M \barXk}\fs \\
		& + 3 \sigma_{\max}^4 (M) C_x^4 \norm{\Xik - \barXk}\fs.
	\end{aligned}
	\end{equation*}
	Combining the above three relationships, we can acquire that
	\begin{equation*}
	\begin{aligned}
		\norm{\Hik - H (\barXk)}\fs 
		\leq {} & C_{hu} \norm{d \Uik - \nabla f (\barXk)}\fs
		+ (C_{hv}^{\prime} + C_{hv}^{\prime \prime} \beta^2) \norm{d \Vik - M \barXk}\fs \\
		& + (C_{hx}^{\prime} + C_{hx}^{\prime \prime} \beta^2) \norm{\Xik - \barXk}\fs,
	\end{aligned}
	\end{equation*}
	where $C_{hu} = 9 (9 p + d^2 C_x^2 C_v^2 + 2 \sigma_{\max}^2 (M) C_x^4) / 2$,
	$C_{hv}^{\prime} = 9 C_x^2 C_g^2 / 4 + 9 d^2 C_x^2 C_u^2$,
	$C_{hv}^{\prime \prime} = 9 (2 p + 2 d^2 C_x^2 C_v^2 + \sigma_{\max}^2 (M) C_x^4)$,
	$C_{hx}^{\prime} = \sigma_{\max}^2 (M) C_{hv}^{\prime}$,
	and $C_{hx}^{\prime \prime} = 9 \sigma_{\max}^4 (M) C_x^4$ are five positive constants.
	For convenience, we further denote two constants $C_2 = \max\{1, 2 d^2 C_{hu}, 2 d^2 C_{hv}^{\prime}, d (C_{hx}^{\prime} + 2 d L_g^2 C_{hu}^{\prime} + 2 d L_m^2 C_{hv}^{\prime})\} \geq 1$ and $C_3 = \max\{2 d^2 C_{hv}^{\prime \prime}, d (C_{hx}^{\prime \prime} + 2 d L_m^2 C_{hv}^{\prime \prime})\} > 0$.
	Then according to Lemma \ref{le:bar}, it follows that
	\begin{equation} \label{eq:hxbar}
	\begin{aligned}
		\norm{\Hik - H (\barXk)}\fs 
		\leq {} & 2 d^2 C_{hu} \norm{\Uik - \barUk}\fs
		+ 2 d^2 (C_{hv}^{\prime} + C_{hv}^{\prime \prime} \beta^2) \norm{\Vik - \barVk}\fs \\
		& + (C_{hx}^{\prime} + 2 d L_g^2 C_{hu}^{\prime} + 2 d L_m^2 C_{hv}^{\prime} + (C_{hx}^{\prime \prime} + 2 d L_m^2 C_{hv}^{\prime \prime}) \beta^2) \norm{\bfXk - \avXk}\fs \\
		\leq {} & C_2 \norm{\Uik - \barUk}\fs
		+ (C_2 + C_3 \beta^2) \norm{\Vik - \barVk}\fs \\
		& + \dfrac{1}{d} (C_2 + C_3 \beta^2) \norm{\bfXk - \avXk}\fs.
	\end{aligned}
	\end{equation}
	The last thing to do in the proof is to show that
	\begin{equation*}
		\norm{\barHk - H (\barXk)}\fs
		\leq \dfrac{1}{d} \sumiid \norm{\Hik - H (\barXk)}\fs.
	\end{equation*}
	Combining the above two relationships, we complete the proof.
\end{proof}

\subsection{Boundedness of Iterates}

In this subsection, we aim to show that the iterate sequence generated by Algorithm \ref{alg:CDADT} is restricted in a neighborhood of the feasible region. 
Moreover, the average of local variables is always restricted in the bounded region $\cR$, which guarantees the usage of Lemma \ref{le:search}.

We first prove the following technical lemma.

\begin{lemma} \label{le:omega}
	Suppose that $\barXkn$ is generated by \eqref{eq:avg-iter}
	with $\barXk \in \cR$ and $ \norm{\barHk - H (\barXk)}\fs \leq 3$.
	Let the penalty parameter $\beta$ and stepsize $\eta$ satisfy
	\begin{equation*}
		\beta \geq \sqrt{\dfrac{864 (3 + C_s^2)}{\sigma_{\min} (M)}},
	\end{equation*}
	and
	\begin{equation*}
		0 < \eta \leq \min\hkh{
			\dfrac{1}{4 L_q \beta}, \, 
			\dfrac{3}{5 \sigma_{\min} (M) \beta}, \,
			\dfrac{\beta}{480(3 + C_s^2)}
		},
	\end{equation*}
	respectively.
	Then, under the conditions in Assumptions \ref{asp:function} and \ref{asp:network}, we have $\barXkn \in \cR$.
\end{lemma}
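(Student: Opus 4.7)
The plan is to argue $\barXkn \in \cR$ via a descent-style analysis of the smooth penalty $\phi(X) := \tfrac{1}{4}\norm{X\zz M X - I_p}\fs$, whose gradient is exactly $\nabla \phi(X) = Q(X)$. The key mechanism is that the penalty component $\beta Q(\barXk)$ dominates inside $\barHk$, so a standard smooth-descent argument on $\phi$ should contract the feasibility violation, with residual errors controlled by $S(\barXk)$ (bounded by $C_s := \sup_{X \in \cR}\norm{S(X)}\ff$) and the tracking residual $e := \barHk - H(\barXk)$ (satisfying $\norm{e}\fs \leq 3$ by hypothesis). Concretely, the requirement $\barXkn \in \cR$ is equivalent to $\phi(\barXkn) \leq 1/144$.

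First I would verify that the stepsize condition $\eta \leq 3/[5 \sigma_{\min}(M)\beta]$, combined with a uniform bound on $\norm{\barHk}\ff$ in terms of $\beta\norm{Q(\barXk)}\ff$, $C_s$, and $\sqrt{3}$, keeps both $\barXk$ and $\barXkn$ in a fixed enlargement of $\cR$ on which $Q$ is $L_q$-Lipschitz. The descent lemma for $\phi$ then gives
\[
\phi(\barXkn) \leq \phi(\barXk) - \eta \jkh{Q(\barXk),\barHk} + \tfrac{L_q\eta^2}{2} \norm{\barHk}\fs.
\]
Second, I would decompose $\barHk = S(\barXk) + \beta Q(\barXk) + e$ and apply Young's inequality to obtain lower bounds on $\jkh{Q(\barXk),\barHk}$ of the form $\tfrac{\beta}{2}\norm{Q(\barXk)}\fs - (C_s^2+3)/\beta$ and upper bounds $\norm{\barHk}\fs \leq 3\beta^2 \norm{Q(\barXk)}\fs + 3(C_s^2+3)$. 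The stepsize condition $\eta \leq 1/(4L_q\beta)$ then absorbs the $\tfrac{3L_q\eta^2\beta^2}{2}\norm{Q(\barXk)}\fs$ piece into half of the main descent. Third, invoking the key inequality $\norm{Q(X)}\fs \geq \tfrac{5}{6}\sigma_{\min}(M) \norm{X\zz M X - I_p}\fs = \tfrac{10}{3}\sigma_{\min}(M)\phi(X)$ -- which is derived inside the proof of Lemma \ref{le:search} -- converts the display into a linear recursion
\[
\phi(\barXkn) \leq \bigl(1 - c\,\eta\beta\sigma_{\min}(M)\bigr)\phi(\barXk) + O\bigl(\eta(C_s^2+3)/\beta\bigr) + O\bigl(L_q \eta^2 (C_s^2+3)\bigr).
\]

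Finally, the lower bound $\beta^2\sigma_{\min}(M) \geq 864(C_s^2+3)$ together with the cap $\eta \leq \beta / [480(C_s^2+3)]$ make each additive error term at most a small fraction of $1/144$. A short case split -- between $\phi(\barXk)$ well below $1/144$, where the additive error alone cannot push $\phi$ past the threshold, and $\phi(\barXk)$ near the boundary, where the multiplicative contraction dominates the error -- then yields $\phi(\barXkn) \leq 1/144$ and closes the induction. The main technical obstacle will be constant-bookkeeping: each of the three stepsize caps serves a distinct role (Lipschitz-region preservation, $L_q$-quadratic absorption, and additive-error smallness), and verifying that the specific numerical constants $864$ and $480$ in the hypothesis are simultaneously sufficient to close the case split uniformly over the admissible interval $\phi(\barXk) \in [0,1/144]$ is the heart of the argument.
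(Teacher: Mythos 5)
Your proposal follows essentially the same route as the paper's proof: a descent-lemma step for the quadratic feasibility measure $c(X)=\tfrac14\|X\zz MX-I_p\|\fs$ with $\nabla c=Q$, the decomposition of $\barHk$ into $\beta Q(\barXk)$ plus a residual bounded by $2(3+C_s^2)$, the lower bound $\|Q(X)\|\fs\ge\tfrac56\sigma_{\min}(M)\|X\zz MX-I_p\|\fs$, and a two-case split (violation at most $1/12$ versus larger) in which the additive error and the multiplicative contraction respectively keep the iterate in $\cR$. The constant bookkeeping you anticipate is exactly what the hypotheses $\beta^2\ge 864(3+C_s^2)/\sigma_{\min}(M)$ and $\eta\le\beta/(480(3+C_s^2))$ are calibrated for, so the plan is sound and matches the paper.
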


\begin{proof}
	It follows from the relationship \eqref{eq:avg-iter} that 
	\begin{equation*}
	\begin{aligned}
		\barXkn 
		= \barXk - \eta H (\barXk) + \eta (H (\barXk) - \barHk)
		= \barXk - \eta \beta Q (\barXk) + \eta Y^{(k)}, \\
	\end{aligned}
	\end{equation*}
	where $Y^{(k)} := H (\barXk) - \barHk - S (\barXk)$ and $Y^{(k)}$ satisfies
	\begin{equation*}
		\norm{Y^{(k)}}\fs \leq 2 \norm{H (\barXk) - \barHk}\fs
		+ 2 \norm{S (\barXk)}\fs
		\leq 2 (3 + C_s^2).
	\end{equation*}
	Since $\barXk \in \cR$, we have $\sigma_{\max}^2 (M^{1/2} \barXk) \leq 7 / 6$ 
	and $\sigma_{\min}^2 (M^{1/2} \barXk) \geq 5 / 6$, and hence,
	\begin{equation} \label{eq:qbarxk}
	\begin{aligned}
		\norm{Q (\barXk)}\fs 
		= {} & \norm{M^{1/2} M^{1/2} \barXk ((\barXk)\zz M \barXk - I_p)}\fs \\
		\geq {} & \sigma_{\min}^2 (M^{1/2}) \sigma_{\min}^2 (M^{1/2} \barXk) \norm{(\barXk)\zz M \barXk - I_p}\fs \\
		\geq {} & \dfrac{5}{6} \sigma_{\min} (M) \norm{(\barXk)\zz M \barXk - I_p}\fs.
	\end{aligned}
	\end{equation}
	By virtue of the Young's inequality, we can obtain that
	\begin{equation*}
	\begin{aligned}
		\norm{\barXkn - \barXk}\fs
		= {} & \eta^2 \norm{\beta Q (\barXk) - Y^{(k)}}\fs
		\leq 2 \eta^2 \beta^2 \norm{Q (\barXk)}\fs + 2 \eta^2 \norm{Y^{(k)}}\fs \\
		\leq {} & \dfrac{\eta \beta}{2 L_q} \norm{Q (\barXk)}\fs + \dfrac{\eta}{2 L_q \beta} \norm{Y^{(k)}}\fs,
	\end{aligned}
	\end{equation*}
	where the last inequality results from the condition that $\eta \leq 1 / (4 L_q \beta)$.
	Moreover, we have
	\begin{equation*}
	\begin{aligned}
		\jkh{Q (\barXk), \barXkn - \barXk}
		= {} & - \eta \beta \norm{Q (\barXk)}\fs
		+ \eta \jkh{Q (\barXk), Y^{(k)}} \\
		\leq {} & - \dfrac{3\eta \beta}{4} \norm{Q (\barXk)}\fs
		+ \dfrac{\eta}{\beta} \norm{Y^{(k)}}\fs.
	\end{aligned}
	\end{equation*}
	For convenience, we denote $c(X) = \|X\zz M X - I_p\|\fs / 4$. 
	According to the local Lipschitz continuity of $\nabla c(X) = Q(X)$, there exists a constant $L_q > 0$ such that
	\begin{equation*}
	\begin{aligned}
		c (\barXkn) 
		\leq {} & c (\barXk) 
		+ \jkh{Q (\barXk), \barXkn - \barXk} 
		+ \dfrac{L_q}{2} \norm{\barXkn - \barXk}\fs \\
		\leq {} & c (\barXk) - \dfrac{\eta \beta}{2} \norm{Q (\barXk)}\fs
		+ \dfrac{5 \eta}{4 \beta} \norm{Y^{(k)}}\fs,
	\end{aligned}
	\end{equation*}
	which together with \eqref{eq:qbarxk} infers that
	\begin{equation*}
		\norm{(\barXkn)\zz M \barXkn - I_p}\fs 
		\leq \dkh{1 - \dfrac{5}{3} \sigma_{\min} (M) \eta \beta} \norm{(\barXk)\zz M \barXk - I_p}\fs
		 + \dfrac{5 \eta}{\beta} \norm{Y^{(k)}}\fs.
	\end{equation*}
	Now we investigate the above relationship in the following two cases.
	
	\paragraph{Case I: $\norm{(\barXk)\zz M \barXk - I_p}\ff \leq 1/ 12$.} 
	Since $\eta \leq \min\{3 / (5 \sigma_{\min} (M) \beta), \beta / (480(3 + C_s^2))\}$, we have
	\begin{equation*}
		\norm{(\barXkn)\zz M \barXkn - I_p}\fs
		\leq \norm{(\barXk)\zz M \barXk - I_p}\fs 
		+ \dfrac{1}{48} 
		= \dfrac{1}{36}.
	\end{equation*}
	
	\paragraph{Case II: $\norm{(\barXk)\zz M \barXk - I_p}\ff > 1/ 12$.}
	It can be readily verified that
	\begin{equation*}
		\begin{aligned}
			\norm{(\barXkn)\zz M \barXkn - I_p}\fs - \norm{(\barXk)\zz M \barXk - I_p}\fs
			\leq {} & - \dfrac{5}{3} \sigma_{\min} (M) \eta \beta \norm{(\barXk)\zz M \barXk - I_p}\fs \\
			& + \dfrac{10 \eta}{\beta} (3 + C_s^2) \\
			\leq {} & 5 \eta \dkh{- \dfrac{1}{432} \sigma_{\min} (M) \beta + \dfrac{2}{\beta} (3 + C_s^2)} \\
			\leq {} & 0,
		\end{aligned}
	\end{equation*}
	where the last inequality follows from the conditions $\beta^2 \geq 864 (3 + C_s^2) / \sigma_{\min} (M)$.
	Hence, we arrive at 
	\begin{equation*}
		\norm{(\barXkn)\zz M \barXkn - I_p}\ff 
		\leq \norm{(\barXk)\zz M \barXk - I_p}\ff 
		\leq 1 / 6.
	\end{equation*}
	Combining the above two cases together, we complete the proof.
\end{proof}

Based on Lemma \ref{le:omega},
we can prove the main results of this subsection.

\begin{proposition} \label{prop:bound}
	Suppose the conditions in Assumptions \ref{asp:function} and \ref{asp:network} hold.
	Let the penalty parameter $\beta$ and stepsize $\eta$ satisfy
	\begin{equation} \label{eq:beta-b}
		\beta \geq \max\hkh{
			1, \,
			2 L_c \sqrt{C_2}, \,
			\sqrt{\dfrac{864 (3 + C_s^2)}{\sigma_{\min} (M)}}, \,
			\sqrt{\dfrac{32 L_c^2 C_1 C_2}{1 - \lambda^2}}
		},
	\end{equation}
	and
	\begin{equation} \label{eq:eta-b}
		\eta \leq \min\hkh{
			\dfrac{1}{4 L_q \beta}, \, 
			\dfrac{3}{5 \sigma_{\min} (M) \beta}, \,
			\dfrac{\beta}{480(3 + C_s^2)}, \,
			\dfrac{1}{\beta (C_h^{\prime} + C_h^{\prime \prime} \beta)}\sqrt{\dfrac{d (1 - \lambda^2)}{2 C_1 (C_2 + C_3 \beta^2)}}
		},
	\end{equation}
	respectively.
	Then for any $k \in \bN$, it holds that 
	\begin{equation} \label{eq:bound-x}
	\left\{
	\begin{aligned}	
		& \barXk \in \cR,~
		\norm{\bfXk}\ff \leq C_x,~ 
		\norm{\bfUk}\ff \leq C_u,~
		\norm{\bfVk}\ff \leq C_v, \\
		& \norm{\bfXk - \avXk}\fs \leq \dfrac{d}{\beta^2 (C_2 + C_3 \beta^2)}, \\
		& \norm{\bfUk - \avUk}\fs \leq \dfrac{d}{C_2},~
		\norm{\bfVk - \avVk}\fs \leq \dfrac{d}{C_2 + C_3 \beta^2}.
	\end{aligned}
	\right.
	\end{equation}
\end{proposition}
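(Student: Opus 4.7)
The plan is to establish the six inequalities in \eqref{eq:bound-x} by a joint induction on $k$, treating them as a single coupled invariant: the norm bounds on $\bfXk$, $\bfUk$, $\bfVk$ are exactly the hypotheses required by Lemma \ref{le:hxbar} and Lemma \ref{le:tracking-error}, while the consensus and tracking error bounds are what we must propagate from one iteration to the next. The base case $k=0$ is handled by the identical initialization $X_i^{(0)} = X_{\mathrm{init}}$, $U_i^{(0)} = \nabla f_i(X_{\mathrm{init}})$, $V_i^{(0)} = M_i X_{\mathrm{init}}$, which forces $\|\mathbf{X}^{(0)}-\bar{\mathbf{X}}^{(0)}\|\ff = 0$; the initial tracking discrepancies are fixed quantities depending on the data and the choice of $X_{\mathrm{init}}$, and are assumed to fit within the target bounds.

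For the inductive step, suppose \eqref{eq:bound-x} holds at iteration $k$. I would first substitute the inductive error bounds into Lemma \ref{le:hxbar}; because the coefficients there exactly match the denominators in \eqref{eq:bound-x}, a one-line computation yields $\|\barHk - H(\barXk)\|\fs \leq 1 + 1 + 1/\beta^2 \leq 3$, using $\beta \geq 1$. This is precisely the hypothesis of Lemma \ref{le:omega}, and the remaining parameter conditions appearing there are the corresponding entries of \eqref{eq:beta-b}--\eqref{eq:eta-b}, so Lemma \ref{le:omega} immediately delivers $\barXkn\in\cR$.

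Next, to invoke Lemmas \ref{le:consensus-error} and \ref{le:tracking-error} I need a quantitative bound on $\|\bfHk\|\fs$. Splitting $\Hik = H(\barXk) + (\Hik - H(\barXk))$ and using the intermediate inequality \eqref{eq:hxbar} together with $\barXk\in\cR$ (which uniformly controls $\|H(\barXk)\|\ff$), one obtains an estimate of the form $\|\bfHk\|\ff \leq C_h' + C_h''\beta$ for constants independent of $\eta$. The stepsize condition in \eqref{eq:eta-b} is precisely calibrated so that $\eta^2 C_1 \|\bfHk\|\fs$ is dominated by $\frac{1-\lambda^2}{2}\cdot\frac{d}{\beta^2(C_2+C_3\beta^2)}$. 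Applying Lemma \ref{le:consensus-error} and using the identity $(1+\lambda^2)/2 = 1 - (1-\lambda^2)/2$ then closes the consensus bound at step $k+1$. The two tracking bounds follow from Lemma \ref{le:tracking-error} in the same fashion; the additional cross-term $8L_c^2 C_1\|\bfXk-\avXk\|\fs$ is absorbed by the condition $\beta^2 \geq 32 L_c^2 C_1 C_2/(1-\lambda^2)$ in \eqref{eq:beta-b}. Finally, the norm bounds $\|\bfXkn\|\ff \leq C_x$, $\|\bfUkn\|\ff \leq C_u$, $\|\bfVkn\|\ff \leq C_v$ follow by triangle inequalities around the averages: the newly established consensus/tracking bounds control the deviations, while $\barXkn\in\cR$ supplies $\|\barXkn\|\ff \leq \tilde{C}_x$ and hence uniform bounds on $\nabla f_i(\barXkn)$ and $M_i\barXkn$.

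The main obstacle I anticipate is the bookkeeping across the six coupled inequalities. Every term in each recursive error bound must be absorbed in exactly the right place, and the four conditions on $\beta$ and four on $\eta$ in \eqref{eq:beta-b}--\eqref{eq:eta-b} are precisely what make those absorptions possible simultaneously. Once the separation of scales $\beta \gg 1$ and $\eta = O(1/\beta^2)$ is exploited, each individual absorption is routine, but tracing which condition serves which purpose---especially the interaction between the $\bfVk$-tracking bound, whose target shrinks like $1/\beta^2$, and the $\bfXk$-consensus error that feeds into it through Lemma \ref{le:tracking-error}---is the delicate part.
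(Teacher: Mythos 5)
Your proposal follows essentially the same route as the paper: a joint induction on all six bounds, using Lemma \ref{le:hxbar} to verify $\|\barHk - H(\barXk)\|\fs \leq 3$ so that Lemma \ref{le:omega} yields $\barXkn \in \cR$, then a uniform bound $\|\bfHk\|\ff \leq C_h' + C_h''\beta$ fed into Lemmas \ref{le:consensus-error} and \ref{le:tracking-error} to propagate the consensus and tracking bounds, with the norm bounds recovered by triangle inequalities. The only divergences are cosmetic (the paper bounds $\|\Hik\|\ff$ directly from its definition rather than by splitting around $H(\barXk)$), and the base-case caveat you honestly flag for the initial tracking errors is one the paper itself glosses over.
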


\begin{proof}
	We intend to prove this proposition by mathematical induction.
	The argument \eqref{eq:bound-x} directly holds at iteration $k = 0$ resulting from the initialization. 
	Now, we assume that this argument holds at iteration $k$, and investigate the situation at iteration $k + 1$.
	
	To begin with, it follows from Lemma \ref{le:hxbar} and the condition $\beta \geq 1$ that
	\begin{equation*}
	\begin{aligned}
		\norm{\barHk - H (\barXk)}\fs 
		\leq \dfrac{C_2}{d} \norm{\bfUk - \avUk}\fs
		+ \dfrac{C_2 + C_3 \beta^2}{d} \dkh{\norm{\bfVk - \avVk}\fs + \norm{\bfXk - \avXk}\fs}
		\leq 3.
	\end{aligned}
	\end{equation*}
	Hence, according to Lemma \ref{le:omega}, we know that $\barXkn \in \cR$.
	Moreover, we have
	\begin{equation*}
	\begin{aligned}
		\norm{\Hik}\ff
		\leq {} & \dfrac{d}{2} \norm{\Uik (3 I_p - d (\Xik)\zz \Vik)}\ff
		+ d^2 \norm{\Vik \sym ((\Xik)\zz \Uik)}\ff \\
		& + \beta d \norm{\Vik (d (\Xik)\zz \Vik - I_p)}\ff \\ 
		\leq {} & \dfrac{C_h^{\prime} + C_h^{\prime \prime} \beta}{\sqrt{d}}, 
	\end{aligned}
	\end{equation*}
	where $C_h^{\prime} = 3 d^{3/2} C_u (d C_x C_v + \sqrt{p}) / 2 > 0$ and $C_h^{\prime \prime} = d^{3/2} C_v (d C_x C_v + \sqrt{p}) > 0$ are two constants.
	Then it can be readily verified that
	\begin{equation*}
		\norm{\bfHk}\fs 
		= \sumiid \norm{\Hik}\fs
		\leq (C_h^{\prime} + C_h^{\prime \prime} \beta)^2.
	\end{equation*}
	Combining Lemma \ref{le:consensus-error} with the condition $\eta^2 \leq \dfrac{d (1 - \lambda^2)}{2 \beta^2 C_1 (C_2 + C_3 \beta^2) (C_h^{\prime} + C_h^{\prime \prime} \beta)^2}$ leads to that
	\begin{equation*}
	\begin{aligned}
		\norm{\bfXkn - \avXkn}\fs
		\leq {} & \dfrac{1 + \lambda^2}{2} \norm{\bfXk - \avXk}\fs
		+ \eta^2 C_1 \norm{\bfHk}\fs \\
		\leq {} & \dfrac{d (1 + \lambda^2)}{2 \beta^2 (C_2 + C_3 \beta^2)}
		+ \eta^2 C_1 (C_h^{\prime} + C_h^{\prime \prime} \beta)^2 \\
		\leq {} & \dfrac{d}{\beta^2 (C_2 + C_3 \beta^2)},
	\end{aligned}
	\end{equation*}
	which together with the condition $\beta \geq 1$ implies that
	\begin{equation*}
	\begin{aligned}
		\norm{\bfXkn}\ff 
		\leq \norm{\bfXkn - \avXkn}\ff
		+ \sqrt{d} \norm{\barXkn}\ff
		\leq \sqrt{d} (1 +  \tilde{C}_x)
		= C_x.
	\end{aligned}
	\end{equation*}
	As a direct consequence of Lemma \ref{le:tracking-error}, we can proceed to show that
	\begin{equation*}
	\begin{aligned}
		\norm{\bfVkn - \avVkn}\fs
		\leq {} & \dfrac{1 + \lambda^2}{2} \norm{\bfVk - \avVk}\fs
		+ 8 L_c^2 C_1 \norm{\bfXk - \avXk}\fs
		+ 2 \eta^2 L_c^2 C_1 \norm{\bfHk}\fs \\
		\leq {} & \dfrac{d (1 + \lambda^2)}{2 (C_2 + C_3 \beta^2)}
		+ \dfrac{8 d L_c^2 C_1}{\beta^2 (C_2 + C_3 \beta^2)}
		+ 2 \eta^2 L_c^2 C_1 (C_h^{\prime} + C_h^{\prime \prime} \beta)^2 \\
		\leq {} & \dfrac{d}{C_2 + C_3 \beta^2},
	\end{aligned}
	\end{equation*}
	where the last inequality results from the conditions $\eta^2 \leq \dfrac{d (1 - \lambda^2)}{8 L_c^2 C_1 (C_2 + C_3 \beta^2) (C_h^{\prime} + C_h^{\prime \prime} \beta)^2}$ and $\beta^2 \geq \dfrac{32 L_c^2 C_1}{1 - \lambda^2}$.
	Furthermore, since $C_2 + C_3 \beta^2 \geq C_2 \geq 1$, we have
	\begin{equation*}
	\begin{aligned}
		\norm{\bfVkn}\ff 
		\leq \norm{\bfVkn - \avVkn}\ff 
		+ \sqrt{d} \norm{\barVkn}\ff
		\leq \sqrt{d} (1 + C_x L_m)
		= C_v.  
	\end{aligned}
	\end{equation*}
	Similarly, under the conditions $\eta^2 \leq \dfrac{d (1 - \lambda^2)}{8 L_c^2 C_1 C_2 (C_h^{\prime} + C_h^{\prime \prime} \beta)^2}$ and $\beta^2 \geq \dfrac{32 L_c^2 C_1 C_2}{1 - \lambda^2}$, we can show that
	\begin{equation*}
		\norm{\bfUkn - \avUkn}\fs \leq \dfrac{d}{C_2},~
		\mbox{and~}
		\norm{\bfUkn}\ff \leq \sqrt{\dfrac{d}{C_2}} + \sqrt{d} C_g 
		\leq C_u. 
	\end{equation*}
	The proof is completed.
\end{proof}

\subsection{Sufficient Descent}

The purpose of this subsection is to evaluate the descent property of the sequence $\{ h(\barXk)\}$.

\begin{lemma} \label{le:des-h}
	Suppose that Assumptions \ref{asp:function} and \ref{asp:network} hold.
	Let all the conditions in Proposition \ref{prop:bound} be satisfied.
	We further assume that $\eta \leq 1 / (8 (L_s + L_q \beta))$.
	Then for any $k \in \bN$, it holds that
	\begin{equation*}
	\begin{aligned}
		h (\barXkn) 
		\leq {} & h (\barXk) 
		- \dfrac{5}{8} \eta \norm{H (\barXk)}\fs
		+ \dfrac{9}{4} \eta \norm{\barHk - H (\barXk)}\fs \\
		& + 4 \eta L_g^2 C_4 \norm{(\barXk)\zz M \barXk - I_p}\fs,
	\end{aligned}
	\end{equation*}
	where $C_4 > 0$ is a constant.
\end{lemma}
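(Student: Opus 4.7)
The plan is to apply a Lipschitz-gradient descent inequality to $h$ at the averaged iterate $\barXkn = \barXk - \eta \barHk$ (using the relation from \eqref{eq:avg-iter}), and then absorb the inexactness of the search direction $\barHk$ through two successive decompositions: first into $H(\barXk) + (\barHk - H(\barXk))$, which isolates the double-tracking error, and second into $\nabla h(\barXk) + r(\barXk)$, where $r(\barXk) := H(\barXk) - \nabla h(\barXk)$ captures the approximation introduced by replacing $\nabla f_i(\cA(X))$ by $\nabla f_i(X)$ in the definition of $S(X)$.

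Concretely, since by Proposition~\ref{prop:bound} the averaged iterates stay inside the bounded region $\cR$, the gradient $\nabla h$ is Lipschitz continuous on a neighborhood of the trajectory with a constant bounded above by $L_s + L_q\beta$ (the first part coming from the composite term $\sumiid f_i\circ\cA$, the second from the quadratic penalty $\beta Q$). The descent lemma then gives
\[
h(\barXkn) \leq h(\barXk) - \eta\jkh{\nabla h(\barXk), \barHk} + \tfrac{1}{2}(L_s + L_q\beta)\eta^2 \norm{\barHk}\fs.
\]
Substituting $\nabla h(\barXk) = H(\barXk) - r(\barXk)$ and expanding,
\[
-\jkh{\nabla h(\barXk), \barHk} = -\norm{H(\barXk)}\fs - \jkh{H(\barXk), \barHk - H(\barXk)} + \jkh{r(\barXk), \barHk}.
\]
Applying Young's inequality to the two cross terms (with weights $1/4$ and $1/16$, respectively), together with the elementary estimate $\norm{\barHk}\fs \leq 2\norm{H(\barXk)}\fs + 2\norm{\barHk - H(\barXk)}\fs$ and the stepsize condition $\eta \leq 1/(8(L_s + L_q\beta))$ to absorb the $\eta^2$-contribution, the coefficients of $\norm{H(\barXk)}\fs$ and $\norm{\barHk - H(\barXk)}\fs$ collapse exactly to $-5/8$ and $9/4$, leaving an $O(\eta)\norm{r(\barXk)}\fs$ term with factor $4$.

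The last step is to bound $\norm{r(\barXk)}\fs$ by $C_4 L_g^2 \norm{(\barXk)\zz M \barXk - I_p}\fs$. The $\beta Q(X)$ contribution appears identically in both $H(X)$ and $\nabla h(X)$, so the discrepancy reduces to terms of the form $\nabla f_i(\cA(X)) - \nabla f_i(X)$ premultiplied by quantities that are bounded on $\cR$. Combining the local Lipschitz continuity of each $\nabla f_i$ with the identity $\cA(X) - X = -\tfrac{1}{2}X(X\zz M X - I_p)$ and the uniform bound $\norm{\barXk}\ff \leq C_x$ from Proposition~\ref{prop:bound}, one obtains the desired inequality with a constant $C_4$ absorbing polynomial dependencies on $\sigma_{\max}(M)$, $C_x$, and the other bounded quantities appearing in the expansion. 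The main obstacle I anticipate is precisely this last estimate: the full formula for $\nabla h(X)$ involves the Jacobian $D\cA(X)^\ast$, which produces several auxiliary terms beyond the naive swap $\nabla f_i(\cA(X)) \leftrightarrow \nabla f_i(X)$, and each such term must be shown to carry a factor of either $\cA(X) - X$ or $X\zz M X - I_p$ so that the final bound scales with the squared feasibility violation. The rest of the argument is essentially mechanical Young's-inequality bookkeeping once the Lipschitz and approximation estimates are in place.
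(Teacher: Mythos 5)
Your proposal follows essentially the same route as the paper's proof: the descent lemma with Lipschitz constant $L_s + L_q\beta$ applied along $\barXkn = \barXk - \eta\barHk$, the identical two-way decomposition of the cross term (your $r(\barXk)$ is just $-(\nabla h(\barXk) - H(\barXk))$), Young's inequality with the bound $\norm{\barHk}\fs \le 2\norm{H(\barXk)}\fs + 2\norm{\barHk - H(\barXk)}\fs$ and the stepsize condition to reach the coefficients $-5/8$ and $9/4$, and finally the observation that $\nabla h - H$ reduces to the swap $\nabla f(\cA(X)) \leftrightarrow \nabla f(X)$ (the $\beta Q$ terms cancelling), controlled via $\cA(X) - X = -\tfrac12 X(X\zz M X - I_p)$ and the Lipschitz constant $L_g$ on the bounded region. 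The obstacle you flag at the end is in fact benign here because the paper defines $H$ by replacing $\nabla f(\cA(X))$ with $\nabla f(X)$ inside the already-computed formula for $\nabla h$, so every residual term carries the desired factor; otherwise the argument is as you describe.
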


\begin{proof}
	According to Proposition \ref{prop:bound}, 
	we know that the inclusion $\barXk \in \cR$ 
	holds for any $k \in \bN$.
	Since $\nabla h$ is locally Lipschitz continuous, there exist two constants $L_s > 0$ and $L_q > 0$ such that
	\begin{equation*}
	\begin{aligned}
		h (\barXkn) 
		= {} & h (\barXk - \eta \barHk)
		\leq h (\barXk) - \eta \jkh{\nabla h (\barXk), \barHk} 
		+ \dfrac{1}{2} \eta^2 \dkh{L_s + L_q \beta} \norm{\barHk}\fs \\
		= {} & h (\barXk) 
		- \eta \norm{H(\barXk)}\fs 
		- \eta \jkh{\nabla h(\barXk) - H(\barXk), \barHk} \\
		& - \eta \jkh{H(\barXk), \barHk - H(\barXk)} 
		+ \dfrac{1}{2} \eta^2 \dkh{L_s + L_q \beta} \norm{\barHk}\fs \\
		\leq {} & h (\barXk) 
		- \dfrac{7}{8} \eta \norm{H(\barXk)}\fs 
		- \eta \jkh{\nabla h(\barXk) - H(\barXk), \barHk} \\
		& - \eta \jkh{H(\barXk), \barHk - H(\barXk)} 
		+ \dfrac{1}{8} \eta \norm{\barHk - H (\barXk)}\fs,
	\end{aligned}
	\end{equation*}
	where the last inequality follows from the condition $\eta \leq 1 / (8 (L_s + L_q \beta))$ and the relationship
	\begin{equation*}
		\norm{\barHk}\fs
		\leq 2 \norm{\barHk - H (\barXk)}\fs
		+ 2 \norm{H (\barXk)}\fs.
	\end{equation*}
	By virtue of the Young's inequality, we can obtain that
	\begin{equation*}
	\begin{aligned}
		\abs{\jkh{\nabla h(\barXk) - H(\barXk), \barHk}}
		\leq {} & 4 \norm{\nabla h(\barXk) - H(\barXk)}\fs
		+ \dfrac{1}{16} \norm{\barHk}\fs \\
		\leq {} & 4 L_g^2 C_4 \norm{(\barXk)\zz M \barXk - I_p}\fs
		+ \dfrac{1}{8} \norm{\barHk - H (\barXk)}\fs \\
		& + \dfrac{1}{8} \norm{H (\barXk)}\fs,
	\end{aligned}
	\end{equation*}
	where $C_4 = (7 \sigma_{\min} (M) (144p + 1) + 343 \sigma_{\max} (M)) / (432 \sigma_{\min}^2 (M)) > 0$ is a constant.
	Moreover, we have 
	\begin{equation*}
	\begin{aligned}
		\abs{\jkh{H (\barXk), \barHk - H (\barXk)}}
		\leq {} & 2 \norm{\barHk - H (\barXk)}\fs 
		+ \dfrac{1}{8} \norm{H (\barXk)}\fs.
	\end{aligned}
	\end{equation*}
	Combing the above relationships, 
	we finally arrive at the assertion of this lemma. 
\end{proof}

The above lemma indicates that the sequence $\{ h(\barXk)\}$ is not necessarily decreasing in a monotonic manner. 
To address this issue, we introduce the following merit function,
\begin{equation*}
	\hbar (\bfX, \bfU, \bfV)
	:= h (\bfE\zz \bfX / d) 
	+ \norm{(I_{dn} - \bfJ) \bfX}\fs
	+ \rho \norm{(I_{dn} - \bfJ) \bfU}\fs
	+ \rho \norm{(I_{dn} - \bfJ) \bfV}\fs,
\end{equation*}
where $\rho = (1 - \lambda^2) / (128 L_c^2 C_1) > 0$ is a constant.
For convenience, we denote $\hbar^{(k)} := \hbar (\bfXk, \bfUk, \bfVk)$ hereafter.
The following proposition illustrates that the sequence $\{\hbar^{(k)}\}$ satisfies a sufficient descent property, and hence, is monotonically decreasing.

\begin{proposition} \label{prop:des-L}
	Suppose that Assumptions \ref{asp:function} and \ref{asp:network} hold.
	Let all the conditions in Proposition \ref{prop:bound} be satisfied.
	We further assume that
	\begin{equation} \label{eq:beta-d}
		\beta \geq \max \hkh{
			\dfrac{12 + 3 \sqrt{42} C_g}{5 \sigma_{\min}^{1/2} (M)}, \,
			\dfrac{L_s^2}{\sigma_{\min}^{3/2} (M)}, \,
			\dfrac{16 L_g^2 C_4}{\sigma_{\min}^{1/2} (M)}
		},
	\end{equation}
	and
	\begin{equation} \label{eq:eta-d}
		\eta \leq \min \hkh{
			\dfrac{1}{16 d C_5}, \,
			\dfrac{1}{8 (L_s + L_q \beta)}, \,
			\dfrac{d (1 - \lambda^2) \min\{1, 2 \rho\}}{36 (C_2 + C_3 \beta^2)}, \,
			\sqrt{\dfrac{(1 - \lambda^2) \min\{1, 2\rho\}}{32 (C_2 + C_3 \beta^2) C_5}}
		}.
	\end{equation}
	Then for any $k \in \bN$, the following sufficient descent property holds, namely,
	\begin{equation*} 
	\begin{aligned}
		\hbar^{(k + 1)}
		\leq {} & \hbar^{(k)}
		- \dfrac{1}{4} \eta \sigma_{\min}^2 (M) \norm{\grad f(\cP (\barXk))}\fs
		- \dfrac{1}{4} \eta \beta \sigma_{\min}^{1/2} (M) \norm{(\barXk)\zz M \barXk - I_p}\fs \\
		& - \dfrac{1 - \lambda^2}{4} \norm{\bfXk - \avXk}\fs
		- \dfrac{(1 - \lambda^2) \rho}{4} \norm{\bfUk - \avUk}\fs
		- \dfrac{(1 - \lambda^2) \rho}{4} \norm{\bfVk - \avVk}\fs.
	\end{aligned}
	\end{equation*}	
\end{proposition}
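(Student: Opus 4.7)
The plan is to bound the four constituent pieces of $\hbar^{(k+1)} - \hbar^{(k)}$ separately using the lemmas already established, and then choose constants so that the negative dissipation terms dominate the positive cross terms. I would start by splitting
\begin{equation*}
\hbar^{(k+1)} - \hbar^{(k)} = \dkh{h(\barXkn) - h(\barXk)} + \dkh{\norm{\bfXkn - \avXkn}\fs - \norm{\bfXk - \avXk}\fs} + \rho \Delta_U + \rho \Delta_V,
\end{equation*}
where $\Delta_U$ and $\Delta_V$ are the analogous differences in the $\bfU$ and $\bfV$ consensus residuals. All boundedness hypotheses required to apply Lemmas \ref{le:consensus-error}, \ref{le:tracking-error}, \ref{le:hxbar}, and \ref{le:des-h} follow from Proposition \ref{prop:bound}. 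Applying Lemma \ref{le:des-h} to the first piece produces $-\tfrac{5}{8}\eta \norm{H(\barXk)}\fs$, a positive multiple of $\norm{\barHk - H(\barXk)}\fs$, and a $4\eta L_g^2 C_4 \norm{(\barXk)\zz M \barXk - I_p}\fs$ nuisance term; Lemma \ref{le:consensus-error} yields the strict contraction $-\tfrac{1-\lambda^2}{2}\norm{\bfXk - \avXk}\fs$ together with an $\eta^2 C_1 \norm{\bfHk}\fs$ residue; and two applications of Lemma \ref{le:tracking-error} handle $\Delta_U$ and $\Delta_V$ with their own $-\tfrac{1-\lambda^2}{2}$ contractions, each generating an additional source term $8 L_c^2 C_1 \norm{\bfXk - \avXk}\fs$.

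Next I would insert Lemma \ref{le:hxbar} to dissect $\norm{\barHk - H(\barXk)}\fs$ into pieces proportional to the three consensus/tracking residuals, and route each piece into its corresponding negative contraction. The choice $\rho = (1-\lambda^2)/(128 L_c^2 C_1)$ is calibrated so that $8 L_c^2 C_1 \rho = (1-\lambda^2)/16$ per tracking variable, so the two source terms together consume only $(1-\lambda^2)/8$ of the $-\tfrac{1-\lambda^2}{2}\norm{\bfXk - \avXk}\fs$ contraction. The slack of $(1-\lambda^2)/4$ that remains, combined with the bound $\eta \leq d(1-\lambda^2)\min\{1,2\rho\}/(36(C_2+C_3\beta^2))$, is exactly what allows the $\tfrac{9}{4}\eta (C_2+C_3\beta^2)/d$ coefficient produced by Lemma \ref{le:hxbar} to be absorbed, leaving the advertised $-\tfrac{1-\lambda^2}{4}\norm{\bfXk - \avXk}\fs$. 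Analogously, the $-\tfrac{(1-\lambda^2)\rho}{2}$ contractions on the $\bfU$- and $\bfV$-residuals are halved by the matching Lemma \ref{le:hxbar} contributions to yield the prescribed $-\tfrac{(1-\lambda^2)\rho}{4}$ coefficients.

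To close the argument I would dispose of the leftover $(C_1 + 4\rho L_c^2 C_1)\eta^2 \norm{\bfHk}\fs$ residue by invoking the uniform bound $\norm{\bfHk}\fs \leq (C_h^{\prime}+C_h^{\prime\prime}\beta)^2$ established inside the proof of Proposition \ref{prop:bound}; packaging this quantity into the constant $C_5$ appearing in \eqref{eq:eta-d} makes the residue absorbable into the remaining slacks of the consensus and tracking dissipations via the two $\eta^2$-type bounds there. Finally, Lemma \ref{le:search} converts $-\tfrac{5}{8}\eta \norm{H(\barXk)}\fs$ into $-\tfrac{5}{16}\eta\sigma_{\min}^2(M)\norm{\grad f(\cP(\barXk))}\fs - \tfrac{5}{8}\eta\beta\sigma_{\min}^{1/2}(M)\norm{(\barXk)\zz M \barXk - I_p}\fs$, and the lower bound $\beta \geq 16 L_g^2 C_4/\sigma_{\min}^{1/2}(M)$ from \eqref{eq:beta-d} is exactly what makes the $4\eta L_g^2 C_4$ nuisance from Lemma \ref{le:des-h} at most a quarter of the feasibility coefficient; the desired $-\tfrac{1}{4}\eta\sigma_{\min}^2(M)\norm{\grad f(\cP(\barXk))}\fs$ and $-\tfrac{1}{4}\eta\beta\sigma_{\min}^{1/2}(M)\norm{(\barXk)\zz M \barXk - I_p}\fs$ then remain.

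The principal difficulty is the simultaneous calibration of $\rho$, $\beta$, and $\eta$ so that five separate dissipation budgets—objective descent, feasibility violation, and the three consensus/tracking errors—absorb the cross terms created by Lemma \ref{le:hxbar} and the $\eta^2\norm{\bfHk}\fs$ residues without any of them becoming non-negative. In particular, the $\norm{\bfXk - \avXk}\fs$ ledger is the tightest, since that quantity is both the output of a contraction and a source term in every other inequality used; this is what forces the intricate $\min\{1,2\rho\}$ in the $\eta$ bounds and the several distinct lower bounds on $\beta$ collected in \eqref{eq:beta-d}.
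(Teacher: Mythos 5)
Your overall architecture mirrors the paper's: the same four-way decomposition of $\hbar^{(k+1)}-\hbar^{(k)}$, the same use of Lemmas \ref{le:des-h}, \ref{le:consensus-error}, \ref{le:tracking-error} and \ref{le:hxbar}, the same calibration of $\rho$ so that the $8L_c^2C_1\rho$ source terms consume only a fraction of the $-\tfrac{1-\lambda^2}{2}\norm{\bfXk-\avXk}\fs$ contraction, and the same final conversion via Lemma \ref{le:search} together with $\beta\geq 16L_g^2C_4\sigma_{\min}^{-1/2}(M)$. However, there is one genuine gap: your treatment of the $\eta^2 C_5\norm{\bfHk}\fs$ residue. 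You propose to control it with the uniform bound $\norm{\bfHk}\fs\leq (C_h^{\prime}+C_h^{\prime\prime}\beta)^2$ from the proof of Proposition \ref{prop:bound}. That bound is a fixed constant, so it turns the residue into an additive term of order $\eta^2$ \emph{per iteration} that is independent of the consensus, tracking, gradient, and feasibility quantities. Such a constant cannot be ``absorbed into the remaining slacks of the consensus and tracking dissipations,'' because those dissipations are proportional to $\norm{\bfXk-\avXk}\fs$, $\norm{\bfUk-\avUk}\fs$, $\norm{\bfVk-\avVk}\fs$, and $\norm{H(\barXk)}\fs$, all of which tend to zero along the iteration. With your bound you would only obtain
\begin{equation*}
\hbar^{(k+1)}\leq \hbar^{(k)} - (\text{dissipation}) + \eta^2 C_5 \dkh{C_h^{\prime}+C_h^{\prime\prime}\beta}^2,
\end{equation*}
which is strictly weaker than the stated proposition (it would yield an $\cO(\eta)$ error floor rather than exact sufficient descent, and would break the telescoping argument in Theorem \ref{thm:global} and the K{\L} analysis that follow).

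The fix — and what the paper actually does — is to re-expand $\norm{\bfHk}\fs$ in terms of the very quantities being dissipated: using \eqref{eq:hxbar} componentwise, $\norm{\bfHk}\fs \leq 2d\norm{H(\barXk)}\fs + 2C_2\norm{\bfUk-\avUk}\fs + 2(C_2+C_3\beta^2)\dkh{\norm{\bfVk-\avVk}\fs + \norm{\bfXk-\avXk}\fs}$. The two conditions you cite, $\eta\leq 1/(16dC_5)$ and $\eta^2\leq (1-\lambda^2)\min\{1,2\rho\}/(32(C_2+C_3\beta^2)C_5)$, exist precisely to make each piece of $\eta^2C_5\norm{\bfHk}\fs$ at most a fixed fraction of the corresponding negative budget (e.g.\ $2d\eta^2C_5\norm{H(\barXk)}\fs\leq \tfrac{\eta}{8}\norm{H(\barXk)}\fs$); this is also where the $-\tfrac{5}{8}\eta\norm{H(\barXk)}\fs$ degrades to $-\tfrac{\eta}{2}\norm{H(\barXk)}\fs$ before Lemma \ref{le:search} is applied, a step your accounting omits. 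The rest of your argument, including the $\rho$-calibration and the $\beta$ condition for the feasibility term, is sound.
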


\begin{proof}
	Combining Lemmas \ref{le:hxbar} and \ref{le:des-h} gives rise to that
	\begin{equation*}
	\begin{aligned}
		h (\barXkn) 
		\leq {} & h (\barXk) 
		- \dfrac{5}{8} \eta \norm{H (\barXk)}\fs
		+ 4 \eta L_g^2 C_4 \norm{(\barXk)\zz M \barXk - I_p}\fs \\
		& + \dfrac{9 \eta C_2}{4 d} \norm{\bfUk - \avUk}\fs
		+ \dfrac{9 \eta (C_2 + C_3 \beta^2)}{4 d} \norm{\bfVk - \avVk}\fs \\
		& + \dfrac{9 \eta (C_2 + C_3 \beta^2)}{4 d} \norm{\bfXk - \avXk}\fs \\
		\leq {} & h (\barXk) 
		- \dfrac{5}{8} \eta \norm{H (\barXk)}\fs
		+ 4 \eta L_g^2 C_4 \norm{(\barXk)\zz M \barXk - I_p}\fs \\
		& + \dfrac{(1 - \lambda^2) \rho}{8} \norm{\bfUk - \avUk}\fs
		+ \dfrac{(1 - \lambda^2) \rho}{8} \norm{\bfVk - \avVk}\fs \\
		& + \dfrac{1 - \lambda^2}{16} \norm{\bfXk - \avXk}\fs,
	\end{aligned}
	\end{equation*}
	where the last inequality follows from the condition $\eta \leq d (1 - \lambda^2) \min\{1, 2 \rho\} / (36 (C_2 + C_3 \beta^2))$.
	Then according to Lemmas \ref{le:consensus-error} and \ref{le:tracking-error}, it follows that
	\begin{equation} \label{eq:hbarkn}
	\begin{aligned}
		\hbar^{(k + 1)}
		\leq {} & \hbar^{(k)}
		- \dfrac{5}{8} \eta \norm{H (\barXk)}\fs
		+ \eta^2 C_5 \norm{\bfHk}\fs
		+ 4 \eta L_g^2 C_4 \norm{(\barXk)\zz M \barXk - I_p}\fs \\
		& - \dfrac{3 (1 - \lambda^2) \rho}{8} \norm{\bfUk - \avUk}\fs
		- \dfrac{3 (1 - \lambda^2) \rho}{8} \norm{\bfVk - \avVk}\fs \\
		& - \dfrac{5 (1 - \lambda^2)}{16} \norm{\bfXk - \avXk}\fs,
	\end{aligned}
	\end{equation}
	where $C_5 = C_1 + 4 L_c^2 C_1 \rho > 0$ is a constant.
	As a direct consequence of the relationship \eqref{eq:hxbar}, we can proceed to show that
	\begin{equation*}
	\begin{aligned}
		\norm{\bfHk}\fs 
		= {} & \sumiid \norm{\Hik}\fs
		\leq 2 d \norm{H (\barXk)}\fs
		+ 2 \sumiid \norm{\Hik - H (\barXk)}\fs \\
		\leq {} & 2 d \norm{H (\barXk)}\fs
		+ 2 C_2 \norm{\bfUk - \avUk}\fs
		+ 2 (C_2 + C_3 \beta^2) \norm{\bfVk - \avVk}\fs \\
		& + 2 (C_2 + C_3 \beta^2) \norm{\bfXk - \avXk}\fs.
	\end{aligned}
	\end{equation*}
	By virtue of the condition $\eta \leq \min\{1 / (16 d C_5), \sqrt{(1 - \lambda^2) \min\{1, 2\rho\} / (32 (C_2 + C_3 \beta^2) C_5)}\}$, we have
	\begin{equation} \label{eq:hk}
	\begin{aligned}
		\norm{\bfHk}\fs 
		\leq {} & \dfrac{1}{8 \eta C_5} \norm{H (\barXk)}\fs
		+ \dfrac{(1 - \lambda^2) \rho}{8 \eta^2 C_5} \norm{\bfUk - \avUk}\fs
		+ \dfrac{(1 - \lambda^2) \rho}{8 \eta^2 C_5} \norm{\bfVk - \avVk}\fs \\
		& + \dfrac{1 - \lambda^2}{16 \eta^2 C_5} \norm{\bfXk - \avXk}\fs.
	\end{aligned}
	\end{equation}
	Then, by combining two relationships \eqref{eq:hbarkn} and \eqref{eq:hk}, it can be readily verified that
	\begin{equation*}
	\begin{aligned}
		\hbar^{(k + 1)}
		\leq {} & \hbar^{(k)}
		- \dfrac{\eta}{2} \norm{H (\barXk)}\fs
		- \dfrac{(1 - \lambda^2) \rho}{4} \norm{\bfUk - \avUk}\fs
		- \dfrac{(1 - \lambda^2) \rho}{4} \norm{\bfVk - \avVk}\fs \\
		& - \dfrac{1 - \lambda^2}{4} \norm{\bfXk - \avXk}\fs
		+ 4 \eta L_g^2 C_4 \norm{(\barXk)\zz M \barXk - I_p}\fs,
	\end{aligned}
	\end{equation*}
	which together with Lemma \ref{le:search} yields that
	\begin{equation*}
	\begin{aligned}
		\hbar^{(k + 1)}
		\leq {} & \hbar^{(k)}
		 - \dfrac{\eta}{4} \sigma_{\min}^2 (M) \norm{\grad f(\cP (\barXk))}\fs
		 - \dfrac{\eta}{2} \dkh{\beta \sigma_{\min}^{1/2} (M) - 8  L_g^2 C_4} \norm{(\barXk)\zz M \barXk - I_p}\fs \\
		& - \dfrac{1 - \lambda^2}{4} \norm{\bfXk - \avXk}\fs
		- \dfrac{(1 - \lambda^2) \rho}{4} \norm{\bfUk - \avUk}\fs
		- \dfrac{(1 - \lambda^2) \rho}{4} \norm{\bfVk - \avVk}\fs.
	\end{aligned}
	\end{equation*}	
	Since $\beta \geq 16  \sigma_{\min}^{-1/2} (M)  L_g^2 C_4$, we can obtain the desired sufficient descent property.
	The proof is completed.
\end{proof}

\subsection{Global Convergence}

Based on the sufficient descent property of $\{\hbar^{(k)}\}$, we can finally establish the global convergence guarantee of Algorithm \ref{alg:CDADT} to a first-order stationary point of the problem \eqref{opt:dest}.
Moreover, the iteration complexity is also presented.

\begin{theorem} \label{thm:global}
	Suppose Assumptions \ref{asp:function} and \ref{asp:network} hold.
	Let the penalty parameter $\beta$ satisfy the conditions \eqref{eq:beta-b} and \eqref{eq:beta-d} and the stepsize $\eta$ satisfy the conditions \eqref{eq:eta-b} and \eqref{eq:eta-d}.
	Then the sequence $\{\bfXk\}$ has at least one accumulation point.
	Moreover, for any accumulation point $\bfX^{\ast}$, there exists a first-order stationary point $\barX^{\ast} \in \SMnp$ of the problem \eqref{opt:dest} such that $\bfX^{\ast} = (\bfone_d \otimes I_n) \barX^{\ast}$.
	Finally, the following relationships hold, namely,
	\begin{equation} \label{eq:sublinear-gradient}
		\min_{k = 0, 1, \dotsc, K - 1} \norm{\grad f (\cP (\barXk))}\fs
		\leq \dfrac{4 (\hbar^{(0)} - \underline{h})}{\eta \sigma_{\min}^2 (M) K},
	\end{equation}
	\begin{equation} \label{eq:sublinear-consensus}
		\min_{k = 0, 1, \dotsc, K - 1} \norm{\bfXk - \avXk}\fs
		\leq \dfrac{4 (\hbar^{(0)} - \underline{h})}
		{(1 - \lambda^2) K},
	\end{equation}
	\begin{equation} \label{eq:sublinear-feasible}
		\min_{k = 0, 1, \dotsc, K - 1} \norm{(\barXk)\zz M \barXk - I_p}\fs
		\leq \dfrac{4 (\hbar^{(0)} - \underline{h})}{\eta \beta \sigma_{\min}^{1/2} (M) K},
	\end{equation}
	where $\underline{h}$ is a constant.
\end{theorem}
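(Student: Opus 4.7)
The plan is to derive every part of the theorem from the sufficient descent of the merit function $\hbar^{(k)}$ in Proposition \ref{prop:des-L}, combined with the uniform boundedness of the iterates guaranteed by Proposition \ref{prop:bound}.

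First I would establish that $\{\hbar^{(k)}\}$ is bounded below. Proposition \ref{prop:bound} confines $\barXk$ to the compact region $\cR$, and since $h$ is continuous on $\cR$, the value $h(\barXk)$ admits a uniform lower bound. The remaining terms $\norm{(I_{dn}-\bfJ)\bfXk}\fs$, $\rho\norm{(I_{dn}-\bfJ)\bfUk}\fs$, and $\rho\norm{(I_{dn}-\bfJ)\bfVk}\fs$ appearing in the definition of $\hbar$ are nonnegative, so $\hbar^{(k)} \geq \underline{h}$ uniformly in $k$ for some finite $\underline{h}$. The monotone descent from Proposition \ref{prop:des-L} combined with this lower bound then yields, by summing and telescoping over $k = 0, \dotsc, K-1$,
\begin{equation*}
\sum_{k=0}^{K-1} \dkh{\tfrac{\eta\sigma_{\min}^2(M)}{4}\norm{\grad f(\cP(\barXk))}\fs + \tfrac{\eta\beta\sigma_{\min}^{1/2}(M)}{4}\norm{(\barXk)\zz M\barXk - I_p}\fs + \tfrac{1-\lambda^2}{4}\norm{\bfXk - \avXk}\fs} \leq \hbar^{(0)} - \underline{h}.
\end{equation*}
The three sublinear rates \eqref{eq:sublinear-gradient}, \eqref{eq:sublinear-consensus}, and \eqref{eq:sublinear-feasible} follow immediately by lower-bounding the sum by $K$ times the minimum over $k$.

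To characterize accumulation points, I would combine $\norm{\bfXk}\ff \leq C_x$ from Proposition \ref{prop:bound} with the Bolzano--Weierstrass theorem to obtain at least one accumulation point $\bfX^{\ast}$. Fix a subsequence $\bfX^{(k_j)} \to \bfX^{\ast}$ and define $\barX^{\ast} := \bfE\zz \bfX^{\ast}/d$. Letting $K\to\infty$ in the telescoped inequality forces the three nonnegative series on its left-hand side to be finite, and therefore the summands $\norm{\bfXk - \avXk}\ff$, $\norm{(\barXk)\zz M\barXk - I_p}\ff$, and $\norm{\grad f(\cP(\barXk))}\ff$ tend to zero along the entire sequence. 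Passing to the limit along $\{k_j\}$ then gives $\bfX^{\ast} = (\bfone_d \otimes I_n)\barX^{\ast}$ from the first limit, $(\barX^{\ast})\zz M\barX^{\ast} = I_p$ from the second (so that $\barX^{\ast} \in \SMnp$ and $\cP(\barX^{\ast}) = \barX^{\ast}$), and finally $\grad f(\barX^{\ast}) = 0$ from the third by continuity of $\cP$ in a neighborhood of $\SMnp$ (ensured by $\barXk \in \cR$) and continuity of $\grad f$.

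The main obstacle is not analytical but essentially bookkeeping: the lower bound $\underline{h}$ must be justified via compactness of $\cR$ and nonnegativity of the consensus/tracking terms, a step that implicitly relies on the parameter restrictions \eqref{eq:beta-b}--\eqref{eq:eta-d} to keep $\barXk$ inside $\cR$ for every $k$. The heavy lifting has already been done: Proposition \ref{prop:des-L} supplies strictly positive coefficients on the three desired quantities, and Lemma \ref{le:search} was absorbed inside it to convert $\norm{H(\barXk)}\fs$ into the Riemannian gradient and feasibility violation. Once the lower bound is in place, the telescoping argument and the Bolzano--Weierstrass limit passage are routine, and every assertion of the theorem drops out.
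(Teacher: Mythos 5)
Your proposal is correct and follows essentially the same route as the paper's proof: bound $\hbar^{(k)}$ from below via the boundedness of the iterates from Proposition \ref{prop:bound} and continuity of $h$, telescope the sufficient descent inequality of Proposition \ref{prop:des-L} to obtain both the vanishing of the three error quantities and the $\cO(1/K)$ rates, and invoke Bolzano--Weierstrass for the accumulation point. The only difference is that you spell out the limit passage along a subsequence (continuity of $\cP$ near $\SMnp$, etc.) in slightly more detail than the paper, which simply asserts the corresponding limits.
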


\begin{proof}
	According to Proposition \ref{prop:bound}, we know that the sequence $\{\bfXk\}$ is bounded.
	Then the lower boundedness of $\{h (\barXk)\}$
	is owing to the continuity of $h$.
	Hence, there exists a constant $\underline{h}$ such that
	\begin{equation*}
		\hbar^{(k)} \geq h (\barXk) \geq \underline{h},
	\end{equation*}
	for any $k \in \bN$.
	It follows from Proposition \ref{prop:des-L} that
	the sequence $\{\hbar^{(k)}\}$ is convergent and 
	the following relationships hold,	
	\begin{equation} \label{eq:limit}
		\lim\limits_{k \to \infty} \norm{\grad f (\cP(\barXk))}\fs = 0, \,
		\lim\limits_{k \to \infty} \norm{\bfXk - \avXk}\fs = 0, \,
		\lim\limits_{k \to \infty} \norm{(\barXk)\zz M \barXk - I_p}\fs = 0.
	\end{equation}
	According to the Bolzano-Weierstrass theorem, it follows that $\{\bfXk\}$ exists an accumulation point, say $\bfX^{\ast}$.
	Then the relationships in \eqref{eq:limit} imply that there exists a first-order stationary point $\barX^{\ast} \in \SMnp$ of the problem \eqref{opt:dest} such that $\bfX^{\ast} = (\bfone_d \otimes I_n) \barX^{\ast}$.
	
	The last thing to do in the proof is to show that the relationships \eqref{eq:sublinear-gradient}-\eqref{eq:sublinear-feasible} hold.
	Indeed, it follows from Proposition \ref{prop:des-L} that
	\begin{equation*}
	\begin{aligned}
		\sum_{k = 0}^{K - 1} \norm{\grad f (\cP(\barXk))}\fs 
		\leq \dfrac{4}{\eta \sigma_{\min}^2 (M)} \sum_{k = 0}^{K - 1} \dkh{\hbar^{(k)} - \hbar^{(k + 1)}}
		\leq \dfrac{4 (\hbar^{(0)} - \hbar^{(K)})}{\eta \sigma_{\min}^2 (M)}
		\leq \dfrac{4 (\hbar^{(0)} - \underline{h})}{\eta \sigma_{\min}^2 (M)},
	\end{aligned}
	\end{equation*}
	which yields the relationship \eqref{eq:sublinear-gradient}.
	The other relationships can be proved similarly.
	Therefore, we complete the proof.
\end{proof}

The global sublinear convergence rate in Theorem \ref{thm:global} guarantees that Algorithm \ref{alg:CDADT} is able to return a first-order $\epsilon$-stationary point in at most $\cO (\epsilon^{-2})$ iterations.
Since Algorithm \ref{alg:CDADT} performs three rounds of communication per iteration, the total number of communication rounds required to obtain a first-order $\epsilon$-stationary point is also $\cO (\epsilon^{-2})$ at the most.

\begin{remark}
	Under the conditions in Theorem \ref{thm:global}, the tracking errors also asymptotically converges to zero at a sublinear rate as follows,
	\begin{equation*} 
		\min_{k = 0, 1, \dotsc, K - 1} \max \hkh{\norm{\bfUk - \avUk}\fs, \, \norm{\bfVk - \avVk}\fs}
		\leq \dfrac{4 (\hbar^{(0)} - \underline{h})}
		{(1 - \lambda^2) \rho K}.
	\end{equation*}
\end{remark}

\subsection{Convergence Under Kurdyka-{\L}ojasiewicz Property}

In this subsection, we establish the convergence of Algorithm \ref{alg:CDADT} when the problem \eqref{opt:dest} satisfies the K{\L} property. 
In particular, for any $i \in [d]$, we prove that the entire sequence $\{X_{i}^{(k)}\}_{k \in \bN}$ converges to a stationary point of the problem \eqref{opt:dest} with guaranteed asymptotic convergence rates.

To begin with, we define the following quantity,
\begin{equation*}
\begin{aligned}
    r^{(k)} := {} & \norm{\grad f(\cP (\barXk))}\ff
	+ \norm{(\barXk)\zz M \barXk - I_p}\ff 
	+ \norm{\bfXk - \avXk}\ff \\
	& + \norm{\bfUk - \avUk}\ff
	+ \norm{\bfVk - \avVk}\ff.
\end{aligned}
\end{equation*}
Then it can be readily verified from Proposition \ref{prop:des-L} that
\begin{equation} \label{eq:des-L}
    \hbar^{(k)} - \hbar^{(k + 1)} \geq C_e (r^{(k)})^2,
\end{equation}
where $C_e := \min\{\eta \sigma_{\min}^2 (M), \eta \beta \sigma^{1/2} (M), 1 - \lambda^2, (1 - \lambda^2) \rho\} / 20 $ is a prefixed positive constant.

Next, we give a lower bound of $r^{(k)}$ by the norm of $\nabla \hbar := (\nabla_{\bfX} \hbar, \nabla_{\bfU} \hbar, \nabla_{\bfV} \hbar)$ in the following lemma.
\begin{lemma} \label{le:nabla-h}
	Suppose Assumption \ref{asp:function} and Assumption \ref{asp:network} hold.
	Let all the conditions in Theorem \ref{thm:global} be satisfied.
	Then, for any $k \in \bN$, there holds that
	\begin{equation*}
		\norm{\nabla \hbar(\bfXk, \bfUk, \bfVk)}\ff \leq C_r r^{(k)},
	\end{equation*}
	where $C_r > 0$ is a constant. 
\end{lemma}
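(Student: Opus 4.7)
The plan is to compute the gradient of $\hbar$ block-by-block with respect to $\bfX$, $\bfU$, and $\bfV$ and then bound each block separately in terms of the five summands forming $r^{(k)}$. Since the projection $I_{dn} - \bfJ$ is symmetric and idempotent, direct differentiation yields
\begin{equation*}
	\nabla_{\bfX} \hbar = \tfrac{1}{d} \bfE \nabla h(\barXk) + 2 (\bfXk - \avXk), \quad
	\nabla_{\bfU} \hbar = 2 \rho (\bfUk - \avUk), \quad
	\nabla_{\bfV} \hbar = 2 \rho (\bfVk - \avVk).
\end{equation*}
Using $\norm{\bfE}_2 = \sqrt{d}$, the two tracking blocks and the consensus piece of the $\bfX$ block are immediately bounded by $2\rho \norm{\bfUk - \avUk}\ff$, $2\rho \norm{\bfVk - \avVk}\ff$, and $2 \norm{\bfXk - \avXk}\ff$ respectively, each of which is dominated by $r^{(k)}$.

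The remaining task, and the only nontrivial one, is to bound $\norm{\nabla h(\barXk)}\ff$ in terms of the first two summands of $r^{(k)}$, namely $\norm{\grad f(\cP(\barXk))}\ff$ and $\norm{(\barXk)\zz M \barXk - I_p}\ff$. I would split $\nabla h(\barXk) = H(\barXk) + (\nabla h(\barXk) - H(\barXk))$. By Proposition \ref{prop:bound} we have $\barXk \in \cR$, so the argument used inside the proof of Lemma \ref{le:des-h} gives $\norm{\nabla h(\barXk) - H(\barXk)}\ff \leq L_g \sqrt{C_4}\, \norm{(\barXk)\zz M \barXk - I_p}\ff$. For $H(\barXk) = S(\barXk) + \beta Q(\barXk)$, the $Q$ term factorises as $M\barXk ((\barXk)\zz M \barXk - I_p)$ and hence is bounded by $\sigma_{\max}(M) \norm{\barXk}_2 \cdot \norm{(\barXk)\zz M \barXk - I_p}\ff$, with $\norm{\barXk}_2$ controlled by $\barXk \in \cR$. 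For $S(\barXk)$, the Lipschitz argument already carried out in the proof of Lemma \ref{le:search} yields $\norm{M\inv S(\barXk) - \grad f(\cP(\barXk))}\ff \leq \sigma_{\min}^{-3/2}(M) L_s \norm{(\barXk)\zz M \barXk - I_p}\ff$, whence $\norm{S(\barXk)}\ff \leq \sigma_{\max}(M)\,\norm{\grad f(\cP(\barXk))}\ff + C \norm{(\barXk)\zz M \barXk - I_p}\ff$ for some absolute constant $C$ depending on $M$ and $L_s$.

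Combining these estimates gives a bound of the form $\norm{\nabla h(\barXk)}\ff \leq C' \norm{\grad f(\cP(\barXk))}\ff + C''(\beta) \norm{(\barXk)\zz M \barXk - I_p}\ff$ with $\beta$-dependent but $k$-independent constants. Summing the four block contributions and defining $C_r$ as the maximum of the resulting prefactors completes the proof. I do not expect any serious obstacle: every auxiliary bound needed here has effectively already appeared in the proofs of Lemma \ref{le:search} and Lemma \ref{le:des-h}, so the work is primarily a careful reassembly together with the closed-form computation of $\nabla \hbar$.
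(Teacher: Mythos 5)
Your proposal is correct and follows essentially the same route as the paper's proof: the same block-wise computation of $\nabla \hbar$, the same reduction of $\nabla h(\barXk)$ to $\grad f(\cP(\barXk))$ via the decomposition through $H = S + \beta Q$ together with the bounds on $\nabla h - H$, $S(\barXk) - S(\cP(\barXk))$, and $Q(\barXk)$ already established in Lemmas \ref{le:search} and \ref{le:des-h}. The only cosmetic difference is that the paper assembles the final constant $C_r$ as an explicit $\ell_2$-type combination of the prefactors rather than taking their maximum, which changes nothing of substance.
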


\begin{proof}
	To begin with, from straightforward calculations, we can obtain that 
	\begin{equation*}
	\begin{aligned}
		\nabla_{\bfX} \hbar (\bfXk, \bfUk, \bfVk) 
		= {} & \bfE \nabla h (\barXk) / d + 2 (I_{dn} - \bfJ) (\bfXk - \avXk) \\
		= {} & \bfE M \grad f (\cP (\barXk)) / d + \bfE (\nabla h (\barXk) - M \grad f (\cP (\barXk)))/ d \\
		& + 2 (I_{dn} - \bfJ) (\bfXk - \avXk).
	\end{aligned}
	\end{equation*}
	Notice that $M \grad f (\cP (\barXk)) = S (\cP (\barXk))$ and $H (\barXk) = S (\barXk) + \beta Q (\barXk)$, we have
	\begin{equation}
        \label{Eq:le:nabla-h_0}
	\begin{aligned}
		\norm{\nabla h (\barXk) - M \grad f (\cP (\barXk))}\ff
		\leq {} & \norm{\nabla h (\barXk) - H (\barXk)}\ff 
		+ \norm{S (\barXk) - S (\cP (\barXk))}\ff \\
		& + \beta \norm{Q (\barXk)}\ff \\
		\leq {} & C_6 \norm{(\barXk)\zz M \barXk - I_p}\ff
		+ \beta \norm{Q (\barXk)}\ff,
	\end{aligned}
	\end{equation}
	where $C_6 := \sqrt{C_4} L_g + \sigma_{\min}^{-1/2} (M) L_s > 0$. 
	According to Proposition \ref{prop:bound}, it follows that $\barXk \in \cR$, and hence,
	\begin{equation}
        \label{Eq:le:nabla-h_1}
		\norm{M \barXk ((\barXk)\zz M \barXk - I_p)}\ff
		\leq \sqrt{\dfrac{7}{6}} \sigma_{\max}^{1 / 2} (M) \norm{(\barXk)\zz M \barXk - I_p}\ff.
	\end{equation}
	Combining \eqref{Eq:le:nabla-h_0} with \eqref{Eq:le:nabla-h_1}, we can acquire that
	\begin{equation*}
	\begin{aligned}
		\norm{\nabla h (\barXk) - M \grad f (\cP (\barXk))}\ff
		\leq \dkh{C_6 + \sqrt{\dfrac{7}{6}} \sigma_{\max}^{1 / 2} (M) \beta} \norm{(\barXk)\zz M \barXk - I_p}\ff,
	\end{aligned}
	\end{equation*}
	which further implies that
	\begin{equation}
        \label{Eq:le:nabla-h_2}
	\begin{aligned}
		\norm{\nabla_{\bfX} \hbar (\bfXk, \bfUk, \bfVk)}\ff
		\leq {} & \dfrac{\sqrt{d}}{d} \sigma_{\max} (M) \norm{\grad f (\cP (\barXk))}\ff + 2 \norm{\bfXk - \avXk}\ff\\
		& + \dfrac{\sqrt{d}}{d} \dkh{C_6 + \sqrt{\dfrac{7}{6}} \sigma_{\max}^{1 / 2} (M) \beta} \norm{(\barXk)\zz M \barXk - I_p}\ff.
	\end{aligned}
	\end{equation}
	Using a similar argument, we can proceed to prove that
	\begin{equation}
            \label{Eq:le:nabla-h_3}
		\norm{\nabla_{\bfU} \hbar (\bfXk, \bfUk, \bfVk)}\ff 
		\leq 2 \rho \norm{\bfUk - \avUk}\ff, \\
	\end{equation}
	and
	\begin{equation}
            \label{Eq:le:nabla-h_4}
		\norm{\nabla_{\bfV} \hbar (\bfXk, \bfUk, \bfVk)}\ff \leq 2 \rho \norm{\bfVk - \avVk}.
	\end{equation}
    Then from \eqref{Eq:le:nabla-h_2}-\eqref{Eq:le:nabla-h_4}, we have
    \begin{equation*}
        \norm{\nabla \hbar(\bfXk, \bfUk, \bfVk)}\ff \leq (3 \sigma_{\max}^2 (M) / d + 3 (C_6 + \sqrt{7 / 6} \sigma_{\max}^{1/2} (M) \beta)^2 / d + 8 \rho^2 + 12)^{1/2} r^{(k)}.
    \end{equation*}
    This completes the proof with $C_r$ chosen as $(3 \sigma_{\max}^2 (M) / d + 3 (C_6 + \sqrt{7 / 6} \sigma_{\max}^{1/2} (M) \beta)^2 / d + 8 \rho^2 + 12)^{1/2}$. 
\end{proof}

Let $\bfZk := (\bfXk, \bfUk, \bfVk)$. The following lemma reveals that the distance between $\bfZkn$ and $\bfZk$ can be controlled by $r^{(k)}$.

\begin{lemma} \label{le:zk+1-zk}
    Suppose Assumption \ref{asp:function} and Assumption \ref{asp:network} hold. 
    Then with the same conditions as Theorem \ref{thm:global}, there exists $C_z > 0$ such that 
    \begin{equation*}
        \norm{\bfZkn - \bfZk}\ff \leq C_z r^{(k)}, 
    \end{equation*}
    for any $k \in \bN$.
\end{lemma}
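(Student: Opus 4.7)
The plan is to bound each of the three increments $\bfXkn - \bfXk$, $\bfUkn - \bfUk$, $\bfVkn - \bfVk$ separately in terms of the consensus/tracking errors and of $\|\bfHk\|\ff$, and then to bound $\|\bfHk\|\ff$ itself by the quantity $r^{(k)}$. Summing and using the triangle inequality will then give the claim.

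First, using $\bfW\avXk = \avXk$, $\bfW\avUk = \avUk$, $\bfW\avVk = \avVk$, I would rewrite the compact updates as
\begin{equation*}
\begin{aligned}
\bfXkn - \bfXk &= (\bfW - I_{dn})(\bfXk - \avXk) - \eta \bfW \bfHk,\\
\bfUkn - \bfUk &= (\bfW - I_{dn})(\bfUk - \avUk) + \bfW(\bfGkn - \bfGk),\\
\bfVkn - \bfVk &= (\bfW - I_{dn})(\bfVk - \avVk) + \bfW(\bfDkn - \bfDk).
\end{aligned}
\end{equation*}
Since $\|\bfW - I_{dn}\|_2 \leq 2$ and $\|\bfW\|_2 \leq 1$, this gives
$\|\bfXkn - \bfXk\|\ff \leq 2\|\bfXk - \avXk\|\ff + \eta\|\bfHk\|\ff$, and (by the local Lipschitz constants $L_g$, $L_m$ on $\cB$ combined with the previous bound on $\|\bfXkn - \bfXk\|\ff$)
$\|\bfUkn - \bfUk\|\ff \leq 2\|\bfUk - \avUk\|\ff + L_g\|\bfXkn - \bfXk\|\ff$, and similarly for $\bfV$ with $L_m$.

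Next, I need to control $\|\bfHk\|\ff$ by $r^{(k)}$. I would reuse the estimate already established in the proof of Proposition \ref{prop:des-L}, namely
\begin{equation*}
\|\bfHk\|\fs \leq 2d\,\|H(\barXk)\|\fs + 2C_2\|\bfUk - \avUk\|\fs + 2(C_2 + C_3\beta^2)\bigl(\|\bfVk - \avVk\|\fs + \|\bfXk - \avXk\|\fs\bigr),
\end{equation*}
and then give a matching \emph{upper} bound on $\|H(\barXk)\|\ff$ of the same flavor as Lemma \ref{le:search} but in the opposite direction. Using $H = S + \beta Q$, the identity $S(\cP(\barXk)) = M\,\grad f(\cP(\barXk))$ (which holds because $\cP(\barXk)\in\SMnp$), the local Lipschitz continuity of $S$ on $\cR$ with constant $L_s$, and the bound $\|X - \cP(X)\|\ff \leq \sigma_{\min}^{-1/2}(M)\|X^\top M X - I_p\|\ff$ already derived in the proof of Lemma \ref{le:search}, I obtain
\begin{equation*}
\|S(\barXk)\|\ff \leq \sigma_{\max}(M)\|\grad f(\cP(\barXk))\|\ff + \sigma_{\min}^{-1/2}(M)L_s\,\|(\barXk)^\top M\barXk - I_p\|\ff.
\end{equation*}
For the penalty part, $\|Q(\barXk)\|\ff \leq \sqrt{7/6}\,\sigma_{\max}^{1/2}(M)\|(\barXk)^\top M \barXk - I_p\|\ff$ since $\barXk\in\cR$. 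Combining the two gives $\|H(\barXk)\|\ff \leq C\bigl(\|\grad f(\cP(\barXk))\|\ff + \beta\|(\barXk)^\top M\barXk - I_p\|\ff\bigr)$ for a constant $C>0$ depending only on $M$ and $L_s$.

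Plugging this back produces $\|\bfHk\|\ff \leq C'\,r^{(k)}$ for some $C'>0$ (depending on $\beta$, $d$, $C_2$, $C_3$). Assembling all pieces,
\begin{equation*}
\|\bfZkn - \bfZk\|\ff \leq \|\bfXkn - \bfXk\|\ff + \|\bfUkn - \bfUk\|\ff + \|\bfVkn - \bfVk\|\ff \leq C_z\,r^{(k)}
\end{equation*}
with $C_z$ an explicit combination of $2$, $\eta$, $L_g$, $L_m$, and $C'$. The main obstacle is really only step two — producing the clean upper bound on $\|H(\barXk)\|\ff$ in terms of the two quantities $\|\grad f(\cP(\barXk))\|\ff$ and $\|(\barXk)^\top M \barXk - I_p\|\ff$; the rest is bookkeeping using the mixing matrix inequalities and the boundedness of iterates from Proposition \ref{prop:bound}.
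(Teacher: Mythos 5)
Your proposal is correct and follows essentially the same route as the paper: the same identity $\bfXkn - \bfXk = (\bfW - I_{dn})(\bfXk - \avXk) - \eta\bfW\bfHk$, the same splitting of $\bfHk$ into the tracking error $\bfHk - \bfE H(\barXk)$ (controlled via the bound of Lemma \ref{le:hxbar}) plus $H(\barXk)$, and the same upper bound on $\|H(\barXk)\|\ff$ obtained from $S(\cP(\barXk)) = M\,\grad f(\cP(\barXk))$, the local Lipschitz continuity of $S$, the estimate $\|X - \cP(X)\|\ff \leq \sigma_{\min}^{-1/2}(M)\|X\zz M X - I_p\|\ff$, and the bound on $\|Q(\barXk)\|\ff$ for $\barXk \in \cR$. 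The treatment of the $\bfU$ and $\bfV$ increments via $2\|\bfUk - \avUk\|\ff + L_c\|\bfXkn - \bfXk\|\ff$ also matches the paper.
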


\begin{proof}
    It follows from the equality \eqref{eq:xk+1-xk} that
    \begin{equation*}
    \begin{aligned}
        \bfXkn - \bfXk
        = {} & (\bfW - I_{dn}) (\bfXk - \avXk) 
        - \eta \bfW (\bfHk - \bfE H (\barXk)) \\
        & - \eta \bfE (S (\barXk) - S (\cP (\barXk)))
        - \eta \bfE Q (\barXk)
        - \eta \bfE M \grad f (\cP(\barXk)),
    \end{aligned}
    \end{equation*}
    which further yields that
    \begin{equation*}
        \norm{\bfXkn - \bfXk}\ff
        \leq \tilde{C}_z r^{(k)},
    \end{equation*}
    with $\tilde{C}_z := 2 + \sqrt{3 (C_2 + C_3 \beta^2)} + \eta \sqrt{d} (C_6 + \sqrt{7/6} \sigma_{\max}^{1 / 2} (M) \beta + \sigma_{\max} (M)) > 0$.
    Moreover, we have
    \begin{equation*}
        \norm{\bfUkn - \bfUk}\ff
        \leq 2 \norm{\bfUk - \avUk}\ff + L_c \norm{\bfXkn - \bfXk},
    \end{equation*}
    and
    \begin{equation*}
        \norm{\bfVkn - \bfVk}\ff
        \leq 2 \norm{\bfVk - \avVk}\ff + L_c \norm{\bfXkn - \bfXk}.
    \end{equation*}
    The above three inequalities imply that
    \begin{equation*}
        \norm{\bfZkn - \bfZk}\ff \leq \dkh{\dkh{1 + 4 L_c^2} \tilde{C}_z^2 + 16}^{1/2} r^{(k)},
    \end{equation*}
    which completes the proof with $C_z$ chosen as $((1 + 4 L_c^2) \tilde{C}_z^2 + 16)^{1/2}$.
\end{proof}

With Lemma \ref{le:nabla-h} and Lemma \ref{le:zk+1-zk}, we establish the convergence of the sequence $\{\bfXk\}$ generated by Algorithm \ref{alg:CDADT} when $\hbar$ is a K{\L} function, as stated in the following theorem.

\begin{theorem}
    Suppose that $\hbar$ is a K{\L} function.
    Then with the same conditions as Theorem \ref{thm:global}, there exists a first-order stationary point $X^{\ast} \in \SMnp$ of the problem \eqref{opt:dest} such that the sequence $\{\bfXk\}$ converges to $(\bfone_d \otimes I_n) X^{\ast}$. 
\end{theorem}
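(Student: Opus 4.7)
The plan is to carry out the standard Kurdyka--{\L}ojasiewicz convergence argument, using the three building blocks already established: the sufficient descent inequality \eqref{eq:des-L}, the subgradient-type bound in Lemma \ref{le:nabla-h}, and the step-size bound in Lemma \ref{le:zk+1-zk}. The goal is to prove that $\sum_{k} \norm{\bfZ^{(k+1)} - \bfZ^{(k)}}\ff < \infty$, which will make $\{\bfZ^{(k)}\}$ a Cauchy sequence and therefore convergent; combining this with Theorem \ref{thm:global} (which identifies any accumulation point as $(\bfone_d \otimes I_n) X^{\ast}$ for some first-order stationary point $X^\ast \in \SMnp$) will yield the claim.

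First, I would invoke Theorem \ref{thm:global} to pick an accumulation point $\bfZ^\ast = (\bfX^\ast,\bfU^\ast,\bfV^\ast)$ of $\{\bfZ^{(k)}\}$ and set $\hbar^{\ast} := \lim_{k\to\infty}\hbar^{(k)}$, which exists because $\{\hbar^{(k)}\}$ is monotonically decreasing and bounded below. By continuity of $\hbar$, $\hbar(\bfZ^\ast) = \hbar^\ast$. If $\hbar^{(k_0)} = \hbar^\ast$ for some $k_0$, then the sufficient descent inequality \eqref{eq:des-L} forces $r^{(k)} = 0$ for all $k \geq k_0$, and Lemma \ref{le:zk+1-zk} gives $\bfZ^{(k+1)} = \bfZ^{(k)}$, so the sequence is eventually constant and we are done. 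Otherwise $\hbar^{(k)} > \hbar^\ast$ for all $k$, and we apply the K{\L} property of $\hbar$ at $\bfZ^\ast$ to obtain a constant $\tau > 0$, a neighborhood $\cU$ of $\bfZ^\ast$, and a desingularizing function $\phi \in \Phi_\tau$ such that $\phi'(\hbar(\bfZ) - \hbar^\ast) \cdot \norm{\nabla \hbar(\bfZ)}\ff \geq 1$ whenever $\bfZ \in \cU$ and $\hbar^\ast < \hbar(\bfZ) < \hbar^\ast + \tau$.

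Next I would argue that the iterates eventually enter and remain in such a neighborhood. Since $\bfZ^\ast$ is an accumulation point, we may choose an index $k_1$ so that $\bfZ^{(k_1)}$ is arbitrarily close to $\bfZ^\ast$ and $\hbar^{(k_1)} - \hbar^\ast$ is arbitrarily small; the standard K{\L} bootstrap (e.g.\ as in the uniformized argument of \cite{Attouch2013convergence,Bolte2014proximal}) then shows, via induction using the chain of inequalities below, that all subsequent iterates stay in $\cU$. At any such iterate, the concavity of $\phi$ gives
\begin{equation*}
\phi(\hbar^{(k)} - \hbar^\ast) - \phi(\hbar^{(k+1)} - \hbar^\ast)
\geq \phi'(\hbar^{(k)} - \hbar^\ast)\,\dkh{\hbar^{(k)} - \hbar^{(k+1)}}
\geq \dfrac{\hbar^{(k)} - \hbar^{(k+1)}}{\norm{\nabla \hbar(\bfZ^{(k)})}\ff},
\end{equation*}
and combining with \eqref{eq:des-L} and Lemma \ref{le:nabla-h} yields
\begin{equation*}
\phi(\hbar^{(k)} - \hbar^\ast) - \phi(\hbar^{(k+1)} - \hbar^\ast)
\geq \dfrac{C_e (r^{(k)})^2}{C_r\, r^{(k)}}
= \dfrac{C_e}{C_r}\, r^{(k)}.
\end{equation*}
Applying Lemma \ref{le:zk+1-zk} then gives $\norm{\bfZ^{(k+1)} - \bfZ^{(k)}}\ff \leq C_z r^{(k)} \leq (C_z C_r / C_e)\fkh{\phi(\hbar^{(k)} - \hbar^\ast) - \phi(\hbar^{(k+1)} - \hbar^\ast)}$. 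Summing from $k_1$ to $\infty$ telescopes to a finite bound because $\phi \geq 0$, so $\sum_{k \geq k_1}\norm{\bfZ^{(k+1)} - \bfZ^{(k)}}\ff < \infty$. Consequently $\{\bfZ^{(k)}\}$ is Cauchy and converges to some limit; since $\bfZ^\ast$ is an accumulation point, the full sequence converges to $\bfZ^\ast$. Projecting onto the first block and invoking Theorem \ref{thm:global} gives $\bfX^\ast = (\bfone_d \otimes I_n) X^\ast$ with $X^\ast \in \SMnp$ a first-order stationary point of \eqref{opt:dest}.

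The main obstacle in this plan is the bootstrapping step that confirms all iterates from some index onward remain inside the K{\L} neighborhood $\cU$ and inside the sublevel set where $\hbar(\bfZ) < \hbar^\ast + \tau$; this requires carefully choosing $k_1$ so that the accumulated jumps $\sum_{k \geq k_1}\norm{\bfZ^{(k+1)} - \bfZ^{(k)}}\ff$ fit within a small ball around $\bfZ^\ast$, and it is the only place where the monotonicity of $\{\hbar^{(k)}\}$ from Proposition \ref{prop:des-L} must be coupled with the local geometry of the K{\L} inequality. Every other step is a mechanical combination of the already-proved bounds.
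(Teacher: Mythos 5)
Your proposal is correct and follows the same skeleton as the paper's proof: monotone descent of $\{\hbar^{(k)}\}$, the early-termination case $\hbar^{(\bar k)}=\hbar^{\circ}$, then the chain combining \eqref{eq:des-L}, Lemma \ref{le:nabla-h}, Lemma \ref{le:zk+1-zk} and the concavity of $\phi$ to get $\norm{\bfZkn-\bfZk}\ff \leq (C_r C_z/C_e)\bigl(\phi(\hbar^{(k)}-\hbar^{\circ})-\phi(\hbar^{(k+1)}-\hbar^{\circ})\bigr)$, telescoping, and the Cauchy conclusion. The one place you diverge is exactly the step you flag as the main obstacle: you apply the K{\L} inequality pointwise at a single accumulation point $\bfZ^{\ast}$ and therefore need the capture/bootstrap induction to keep all tail iterates inside the neighborhood $\cU$. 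The paper sidesteps this entirely by working with the whole accumulation set $\Omega$: it invokes Lemma 5 of \cite{Bolte2014proximal} to get that $\Omega$ is compact and connected with $\dist(\bfZk,\Omega)\to 0$, and then the \emph{uniformized} K{\L} property (Lemma 6 of \cite{Bolte2014proximal}) to obtain a single desingularizing function valid on $\{\bfZ \mid \dist(\bfZ,\Omega)<\alpha,\ \hbar^{\circ}<\hbar(\bfZ)<\hbar^{\circ}+\tau\}$, a set that automatically contains every iterate with $k$ large. Your route is the classical Attouch--Bolte one and is perfectly valid, but to make it complete you would have to actually carry out the induction (choose $k_1$ with $\norm{\bfZ^{(k_1)}-\bfZ^{\ast}}\ff + (C_rC_z/C_e)\,\phi(\hbar^{(k_1)}-\hbar^{\circ})$ smaller than the radius of $\cU$, then show inductively that the partial telescoping sums keep the iterates in $\cU$); the paper's choice of the uniformized lemma buys you the luxury of skipping that argument altogether.
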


\begin{proof}
    According to Proposition \ref{prop:des-L}, the sequence $\{\hbar^{(k)}\}$ is nonincreasing and has a lower bound, which implies that the limit $\hbar^{\circ} := \lim_{k \to \infty} \hbar^{(k)}$ exists.
    Let $\Omega$ be the set of all the accumulation points of the sequence $\{\bfZk\}$.
    For any $\bfZ^{\circ} := (\bfX^{\circ}, \bfU^{\circ}, \bfV^{\circ}) \in \Omega$, we have 
    \begin{equation} \label{eq:limit-h}
        \hbar (\bfZ^{\circ}) = \lim_{k \to \infty} \hbar^{(k)} = \hbar^{\circ}.
    \end{equation}
    Thus, $\hbar$ is equal to the constant $\hbar^{\circ}$ on $\Omega$.
    If there exists $\bar{k} \in \bN$ such that $\hbar^{(\bar{k})} = \hbar^{\circ}$, the direct combination of Proposition \ref{prop:des-L} and Lemma \ref{le:zk+1-zk} would imply that $\bfZ^{(\bar{k} + 1)} = \bfZ^{(\bar{k})}$.
    Then a trivial induction shows that the assertion of this theorem is obvious.
    Since $\{\hbar^{(k)}\}$ is a nonincreasing sequence, it is clear from \eqref{eq:limit-h} that $\hbar^{(k)} > \hbar^{\circ}$.
    Moreover, for any $\tau > 0$, there exists $k^{\prime} \in \bN$ such that $\hbar^{(k)} < \hbar^{\circ} + \tau$ with $k > k^{\prime}$.
    According to Lemma 5 in \cite{Bolte2014proximal}, we know that $\Omega$ is a compact and connect set and
    \begin{equation*} 
        \lim_{k \to \infty} \dist (\bfZk, \Omega) = 0.
    \end{equation*}
    Hence, for any $\alpha > 0$, there exists $k^{\prime \prime}$ such that $\dist (\bfZk, \Omega) < \alpha$ with $k > k^{\prime\prime}$. 
    Summing up all these facts, we can obtain that $\bfZk$ belongs to the intersection of $\{\bfZ \mid \dist (\bfZk, \Omega) < \alpha\}$ and $\{\bfZ \mid \hbar^{\circ} < \hbar (\bfZ) < \hbar^{\circ} + \tau\}$ for any $k > \max\{k^{\prime}, k^{\prime\prime}\}$.
    By Lemma 6 in \cite{Bolte2014proximal}, there exists a constant $\tau > 0$ and a function $\phi \in \Phi_{\tau}$ such that
    \begin{equation*}
        \phi^{\prime} (\hbar^{(k)} - \hbar^{\circ}) \norm{\nabla \hbar (\bfZk)}\ff \geq 1,
    \end{equation*}
    for any $k > \max\{k^{\prime}, k^{\prime\prime}\}$.
    Multiplying both sides of \eqref{eq:des-L} by $\phi^{\prime} (\hbar^{(k)} - \hbar^{\circ})$ yields that
    \begin{equation*}
        C_e \phi^{\prime} (\hbar^{(k)} - \hbar^{\circ}) (r^{(k)})^2
        \leq \phi^{\prime} (\hbar^{(k)} - \hbar^{\circ}) (\hbar^{(k)} - \hbar^{(k + 1)})
        \leq \phi (\hbar^{(k)} - \hbar^{\circ}) - \phi (\hbar^{(k + 1)} - \hbar^{\circ}),
    \end{equation*}
    where the second inequality follows from the concavity of $\phi$.
    Combining the above relationship with Lemma \ref{le:nabla-h} and Lemma \ref{le:zk+1-zk}, we have
    \begin{equation*}
        \phi (\hbar^{(k)} - \hbar^{\circ}) - \phi (\hbar^{(k + 1)} - \hbar^{\circ})
        \geq \dfrac{C_e}{C_r} r^{(k)} \phi^{\prime} (\hbar^{(k)} - \hbar^{\circ}) \norm{\nabla \hbar (\bfZk)}\ff 
        \geq \dfrac{C_e}{C_r C_z} \norm{\bfZkn - \bfZk}\ff,
    \end{equation*}
    which further implies that
    \begin{equation*}
        \sum_{k = 0}^{s} \norm{\bfZkn - \bfZk}\ff \leq \dfrac{C_r C_z}{C_e} \dkh{\phi (\hbar^{(0)} - \hbar^{\circ}) - \phi (\hbar^{(s + 1)} - \hbar^{\circ})}
        \leq \dfrac{C_r C_z}{C_e} \phi (\hbar^{(0)} - \hbar^{\circ}),
    \end{equation*}
    for any $s \in \bN$.
    Letting $s \to \infty$, we can attain that
    \begin{equation*}
        \sum_{k = 0}^{\infty} \norm{\bfZkn - \bfZk}\ff < \infty.
    \end{equation*}
    Hence, the iterate sequence $\{\bfZk\}$ is a Cauchy sequence and hence is convergent,
    which infers that $\{\bfXk\}$ is also convergent.
    Finally, Theorem \ref{thm:global} guarantees that the limit point of $\{\bfXk\}$ has the form $(\bfone_d \otimes I_n) X^{\ast}$, where $X^{\ast} \in \SMnp$ is a first-order stationary point of the problem \eqref{opt:dest}. 
    This completes the proof.
\end{proof}

When $\hbar$ is a semialgebraic function, one important result is that the desingularizing function can be chosen to be of the form
\begin{equation*}
    \phi (t) = c t^{1 - \theta},
\end{equation*}
where $c > 0$ is a constant and $\theta \in [0, 1)$ is a parameter impacting the convergence rate.
Let $\bfZ^{\circ}$ be the limit point of the sequence $\bfZk$.
Using the same line of analysis introduced in \cite{Attouch2009convergence}, we can obtain the following estimations of convergence rates.
\begin{enumerate}

    \item If $\theta = 0$, the sequence $\bfZk$ converges in a finite number of steps.

    \item If $\theta \in (0, 1/2]$, there exists $\mu > 0$ and $\omega \in (0, 1)$ such that 
    \begin{equation*}
        \norm{\bfZk - \bfZ^{\circ}}\ff \leq \mu \omega^k.
    \end{equation*}

    \item If $\theta \in (1/2, 1)$, there exists $\mu > 0$ such that
    \begin{equation*}
        \norm{\bfZk - \bfZ^{\circ}}\ff \leq \mu k^{- \frac{1 - \theta}{2 \theta - 1}}.
    \end{equation*}
    
\end{enumerate}

\section{Numerical Experiments}

\label{sec:experiments}

In this section, we conduct a series of numerical experiments to demonstrate the efficiency and effectiveness of CDADT, specifically focusing on the CCA problems \eqref{opt:cca}.
The corresponding experiments are performed on a workstation with dual Intel Xeon Gold 6242R CPU processors (at $3.10$ GHz$\times 20 \times 2$) and $510$ GB of RAM under Ubuntu 20.04.
The tested algorithms are implemented in the $\mathtt{Python}$ language with the communication realized by the $\mathtt{mpi4py}$ package.

In the numerical experiments, the following three quantities are collected and recorded at each iteration as performance metrics.
\begin{itemize}
	
	\item Stationarity violation: $\norm{\barUk - \barVk \sym \dkh{(\barXk)\zz \barUk}}\ff$.
	
	\item Consensus error: $\sumiid \norm{\Xik - \barXk}\ff / d$.
	
	\item Feasibility violation: $\norm{(\barXk)\zz \barVk - I_p}\ff$.
	
\end{itemize}
Furthermore, we generate the Metropolis constant edge weight matrix \cite{Shi2015} as the mixing matrix for the tested networks.

\subsection{Numerical Results on Synthetic Datasets}

The first experiment is to evaluate the performance of CDADT on synthetic datasets. 
Specifically, in the CCA problem \eqref{opt:cca}, the first data matrix $A \in \Rnq$ (assuming $n \leq q$ without loss of generality) by its (economy-form) singular value decomposition as follows,
\begin{equation}\label{eq:gen-A}
	A = U S V\zz,
\end{equation}
where both $U \in \Rnn$ and $V \in \bR^{q \times n}$ are orthogonal matrices orthonormalized from randomly generated matrices, and $S \in \Rnn$ is a diagonal matrix with diagonal entries 
\begin{equation} \label{eq:Sigma_ii}
	S(i, i) = \xi_A^{i}, \quad \iin,
\end{equation}  
for a parameter $\xi_A \in (0, 1)$ that determines the decay rate of the singular values of $A$.
The second data matrix $B \in \Rmq$ is generated in a similar manner but with a different decay rate $\xi_B \in (0, 1)$ of the singular values.
After construction, the columns of the data matrices $A$ and $B$ are uniformly distributed into $d$ agents.

We test the performances of CDADT with different choices of penalty parameters on the Erd{\"o}s-R{\'e}nyi (ER) network.
The data matrices $A \in \Rnq$ and $B \in \Rmq$ are randomly generated with $n = 20$, $m = 30$, $q = 3200$, $\xi_A = 0.97$, and $\xi_B = 0.96$. 
And the CCA problem \eqref{opt:cca} is tested with $p = 5$ and $d = 32$.
The corresponding numerical results are provided in Figure \ref{fig:CCA_beta}, which presents the performances of CDADT with $\beta \in \{0.01, 0.1, 1, 10, 100\}$.
It can be observed that the curves of three performance metrics almost coincide with each other, which corroborates the robustness of CDADT to the penalty parameter in a wide range.

\begin{figure}[ht!]
	\centering
	
	\subfigure[Stationarity Violation]{
		\label{subfig:CCA_beta_kkt}
		\includegraphics[width=0.3\linewidth]{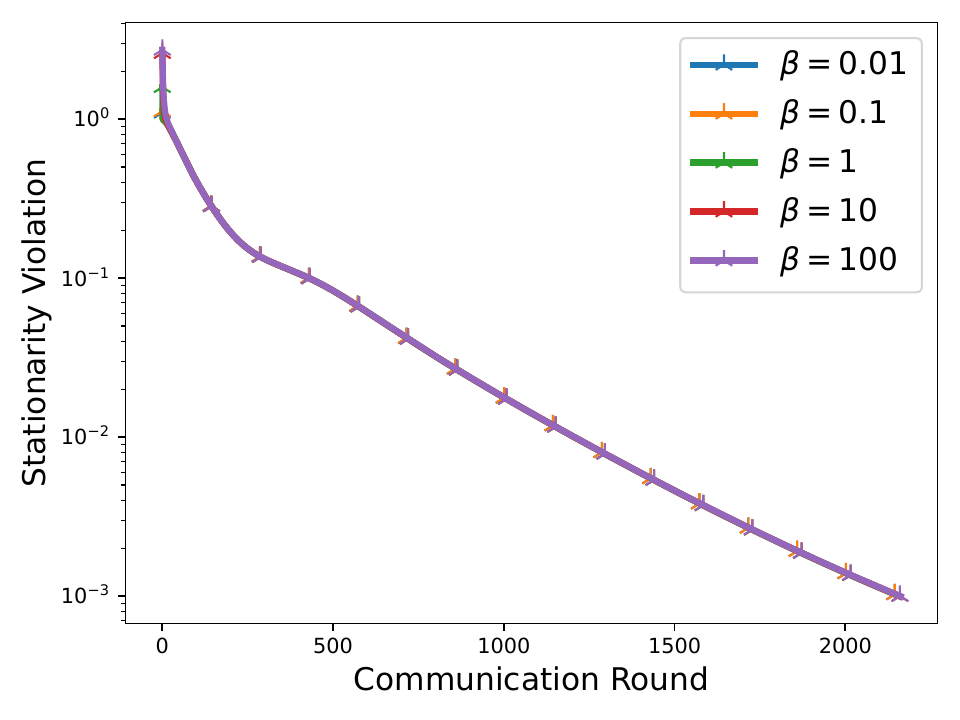}
	}
	\subfigure[Consensus Error]{
		\label{subfig:CCA_beta_cons}
		\includegraphics[width=0.3\linewidth]{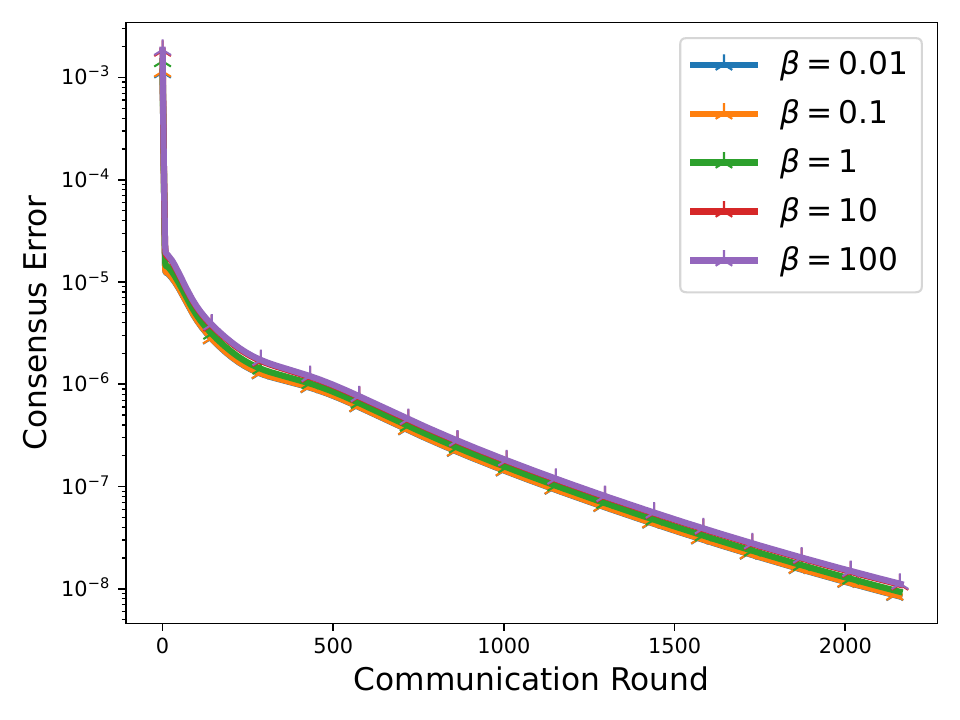}
	}
	\subfigure[Feasibility Violation]{
		\label{subfig:CCA_beta_feas}
		\includegraphics[width=0.3\linewidth]{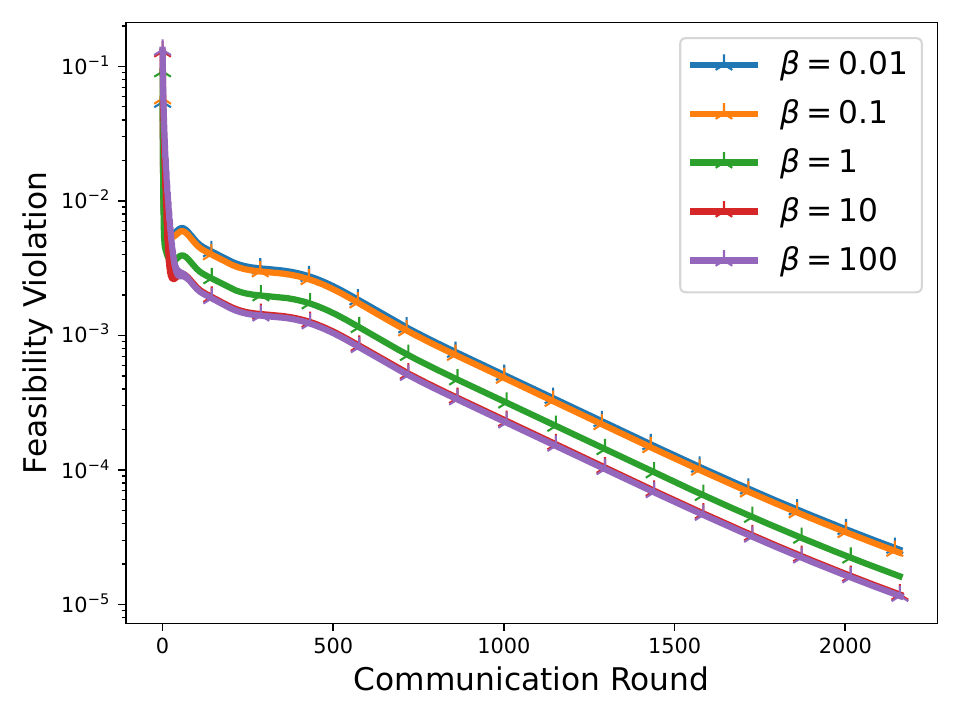}
	}
	
	\caption{Numerical performances of CDADT for different values of $\beta$.}
	\label{fig:CCA_beta}
\end{figure}

Furthermore, we conduct numerical tests to evaluate the impact of network topologies on the performance of CDADT, focusing on ring networks, grid networks, and ER networks. 
Figure \ref{fig:network} illustrates the structures of these networks and the corresponding values of $\lambda$ defined in \eqref{eq:lambda}.
For our experiment, the data matrices $A \in \Rnq$ and $B \in \Rmq$ are randomly generated with $n = m = 50$, $q = 3200$, $\xi_A = 0.99$, and $\xi_B = 0.98$. 
Then the CCA problem \eqref{opt:cca} is tested with $p = 5$ and $d = 16$.
We set the algorithmic parameters $\eta = 0.0001$ and $\beta = 1$ in CDADT.
Figure \ref{fig:CCA_network} depicts the diminishing trend of three performance metrics against the communication rounds on a logarithmic scale, with different networks distinguished by colors.
We can observe that, as the network connectivity becomes worse (i.e., $\lambda$ approaches $1$), our algorithm requires more communication rounds to achieve the specified accuracy.

\begin{figure}[ht!]
	\centering
	
	\subfigure[ER $(\lambda \approx 0.82)$]{
		\label{subfig:er}
		\includegraphics[width=0.25\linewidth]{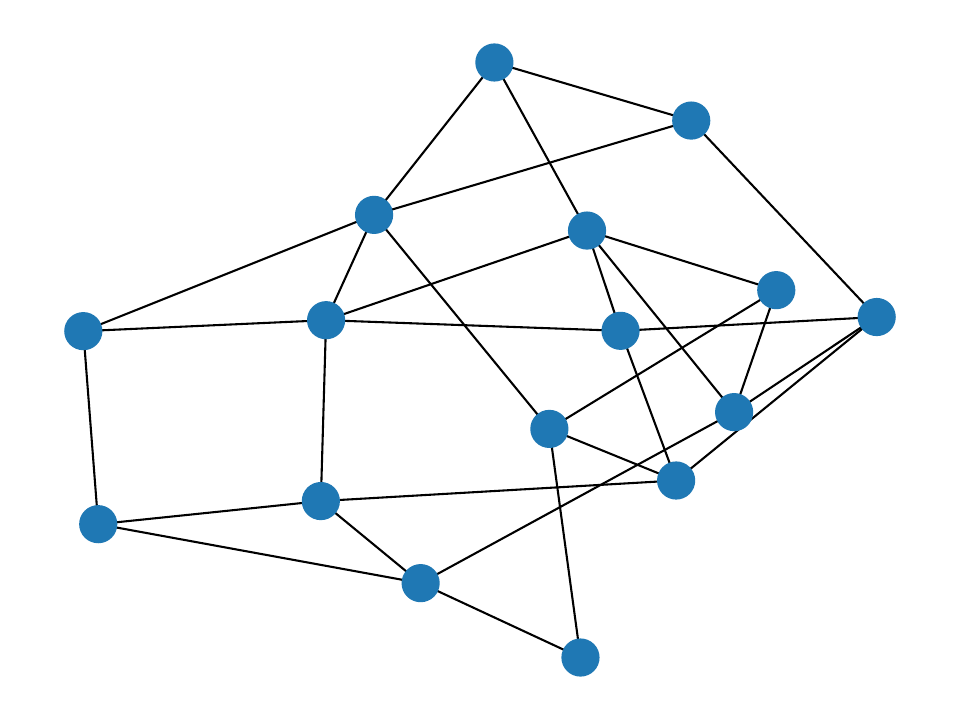}
	}
	\subfigure[Grid $(\lambda \approx 0.87)$]{
		\label{subfig:grid}
		\includegraphics[width=0.25\linewidth]{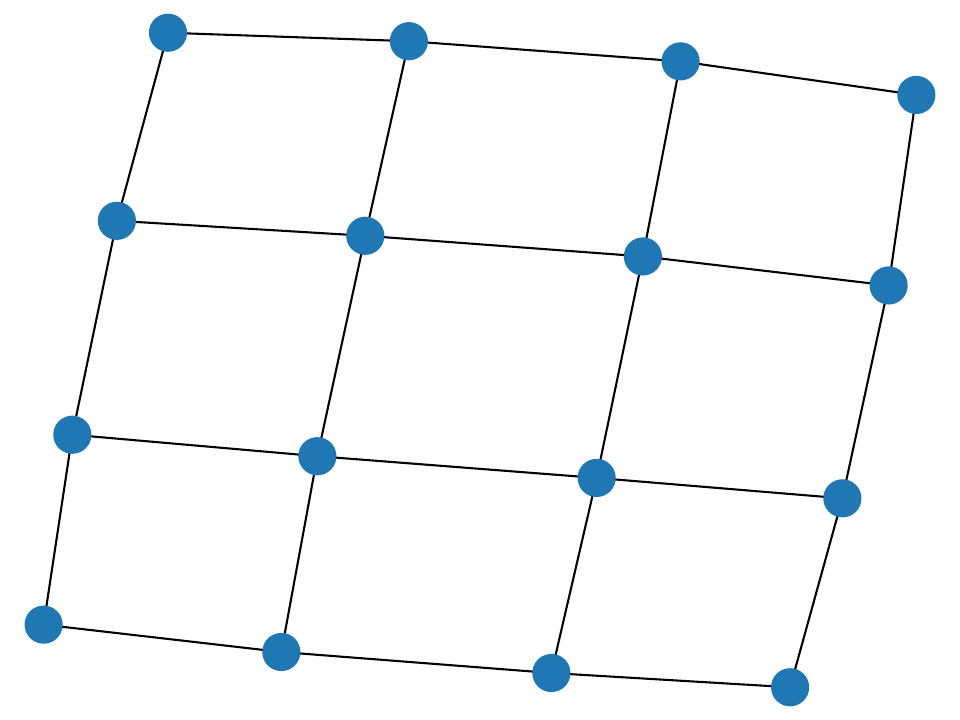}
	}
	\subfigure[Ring $(\lambda \approx 0.95)$]{
		\label{subfig:ring}
		\includegraphics[width=0.25\linewidth]{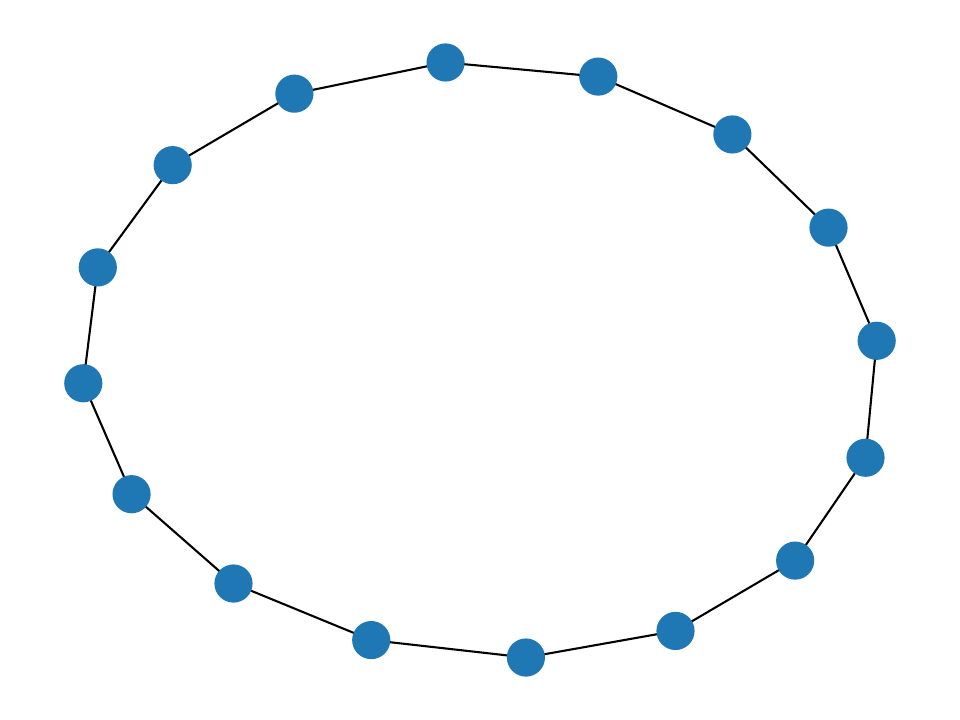}
	}
	
	\caption{Illustration of different network structures.}
	\label{fig:network}
\end{figure}

\begin{figure}[ht!]
	\centering
	
	\subfigure[Stationarity Violation]{
		\label{subfig:CCA_kkt_network}
		\includegraphics[width=0.3\linewidth]{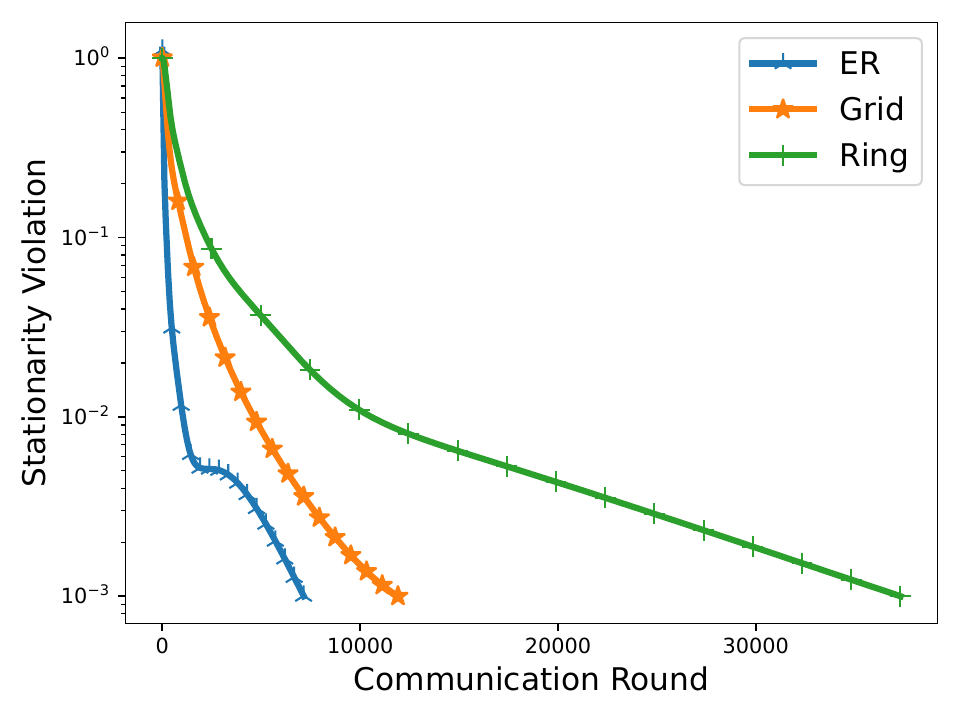}
	}
	\subfigure[Consensus Error]{
		\label{subfig:CCA_cons_network}
		\includegraphics[width=0.3\linewidth]{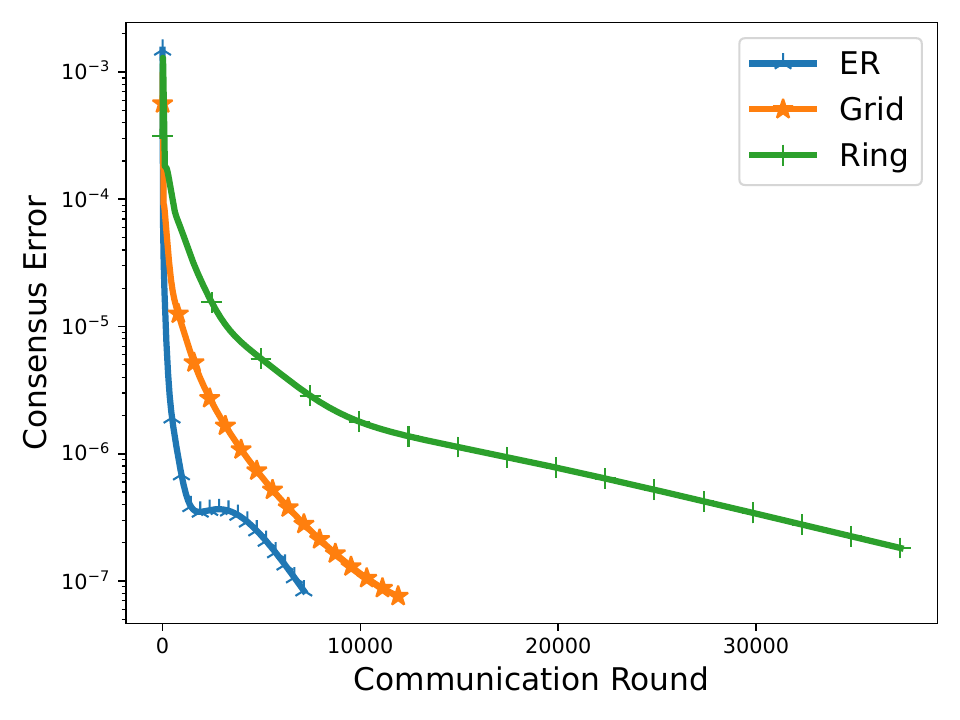}
	}
	\subfigure[Feasibility Violation]{
		\label{subfig:CCA_feas_network}
		\includegraphics[width=0.3\linewidth]{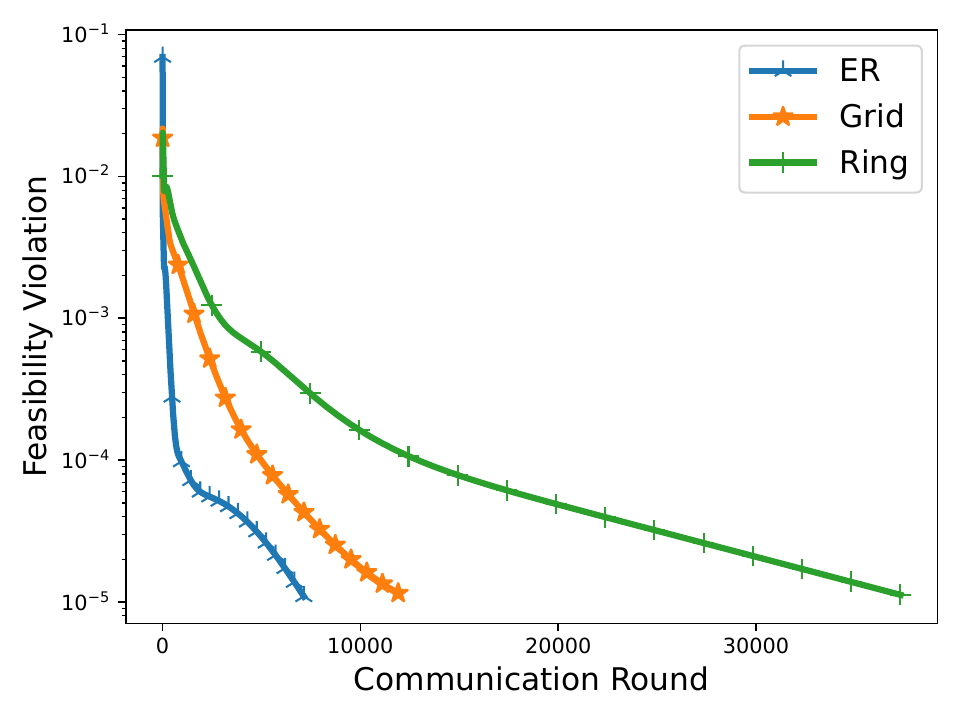}
	}
	
	\caption{Numerical performances of CDADT on synthetic datasets across three different networks.}
	\label{fig:CCA_network}
\end{figure}

\subsection{Numerical Results on Real-world Datasets}

Next, we engage in a numerical test to assess the effectiveness of CDADT on two real-world datasets, including MNIST \cite{LeCun1998gradient} and Mediamill \cite{Snoek2006challenge}.
Specifically, MNIST is a database of handwritten digits, consisting of gray-scale images of size $28 \times 28$. 
Every image is split into left and right halves, which are used as the two views with $n = m = 392$.
We employ CCA to learn the correlated representations between left and right halves of the images, which involves the full training set of MNIST containing $q = 60000$ images.
In the Mediamill dataset, each image is a representative keyframe of a video shot containing $n = 120$ features, which is annotated with $m = 101$ labels.
We extract the first $q = 43200$ samples to test the CCA problem, which is performed to explore the correlation structure between images and labels.

For our testing, we employ CDADT to solve the CCA problem \eqref{opt:cca} on the ER network, where we fix $p = 5$ and $d = 32$.
And the algorithmic parameters are set to $\eta = 0.005$ and $\beta = 1$ in CDADT.
The corresponding numerical results are presented in Figure \ref{fig:CCA_real}.
It is noteworthy that the effectiveness of CDADT is not limited to synthetic datasets but also extends to real-world applications.
Moreover, CDADT demonstrates its potential to solve CCA problems over large-scale networks.

\begin{figure}[ht!]
	\centering
	
	\subfigure[MNIST]{
		\label{subfig:CCA_real_MNIST}
		\includegraphics[width=0.4\linewidth]{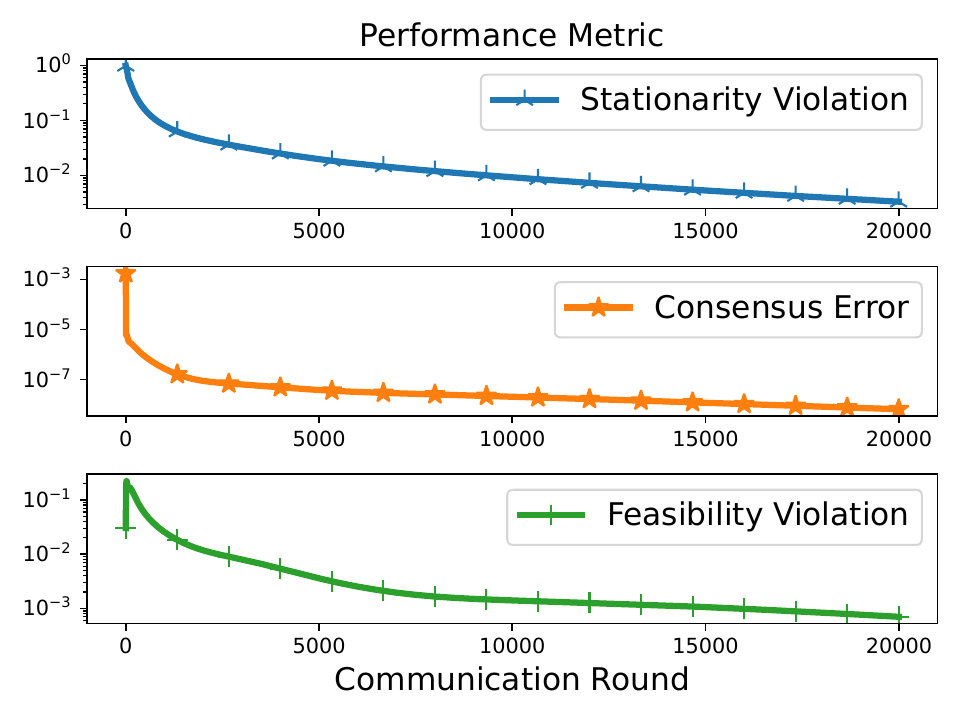}
	}
	\subfigure[Mediamill]{
		\label{subfig:CCA_real_Mediamill}
		\includegraphics[width=0.4\linewidth]{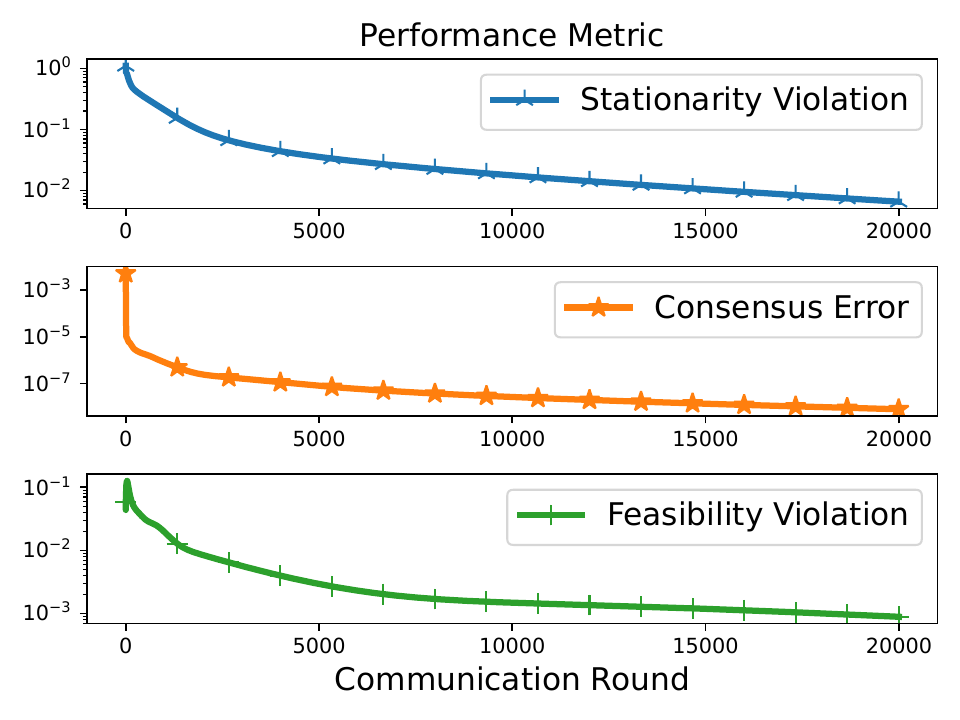}
	}
	
	\caption{Numerical performances of CDADT on two real-world datasets.}
	\label{fig:CCA_real}
\end{figure}

\section{Conclusion}

\label{sec:conclusion}

Decentralized optimization problems with generalized orthogonality constraints arise in various scientific and engineering applications.
Existing algorithms for solving Riemannian optimization problems heavily rely on geometric tools of the underlying Riemannian manifold, such as tangent spaces and retraction operators.
However, these algorithms can not be applied to solve \ref{opt:cca-dist}, where the manifold constraints possess an inherently distributed structure. To surmount this intricate challenge, we propose to employ the constraint dissolving operator to build up an exact penalty model of the original problem.
Nevertheless, existing algorithms remain unsuitable for solving the newly derived penalty model, as the penalty function is not entirely separable over that agents.
To address these challenges, we develop an efficient decentralized algorithm based on the double-tracking strategy.
In order to construct a descent direction of the penalty function, our algorithm not only tracks the gradient of the objective function but also maintains a global estimate of the Jacobian of the constraint mapping.
The proposed algorithm is guaranteed to converge to a first-order stationary point under mild conditions.
We validate its performance through numerical experiments conducted on both synthetic and real-world datasets.

As for future works, we are interested in extending our algorithm to general constraints such that it can find a wider range of applications. Moreover, it is worthy of investigating the performance of CDADT in stochastic and online settings.

\paragraph{Acknowledgement}

The third author was supported in part by the National Natural Science Foundation of China (12125108, 12226008, 11991021, 11991020, 12021001, 12288201), Key Research Program of Frontier Sciences, Chinese Academy of Sciences (ZDBS-LY-7022), and CAS--Croucher Funding Scheme for Joint Laboratories ``CAS AMSS--PolyU Joint Laboratory of Applied Mathematics: Nonlinear Optimization Theory, Algorithms and Applications''.


\bibliographystyle{plainnat}

\bibliography{library}

\begin{thebibliography}{45}
\providecommand{\natexlab}[1]{#1}
\providecommand{\url}[1]{\texttt{#1}}
\expandafter\ifx\csname urlstyle\endcsname\relax
  \providecommand{\doi}[1]{doi: #1}\else
  \providecommand{\doi}{doi: \begingroup \urlstyle{rm}\Url}\fi

\bibitem[Absil et~al.(2008)Absil, Mahony, and Sepulchre]{Absil2008}
P.-A. Absil, R.~Mahony, and R.~Sepulchre.
\newblock \emph{Optimization algorithms on matrix manifolds}.
\newblock Princeton University Press, 2008.
\newblock ISBN 9781400830244.

\bibitem[Attouch and Bolte(2009)]{Attouch2009convergence}
Hedy Attouch and J{\'e}r{\^o}me Bolte.
\newblock On the convergence of the proximal algorithm for nonsmooth functions
  involving analytic features.
\newblock \emph{Mathematical Programming}, 116:\penalty0 5--16, 2009.

\bibitem[Attouch et~al.(2010)Attouch, Bolte, Redont, and
  Soubeyran]{Attouch2010proximal}
H{\'e}dy Attouch, J{\'e}r{\^o}me Bolte, Patrick Redont, and Antoine Soubeyran.
\newblock Proximal alternating minimization and projection methods for
  nonconvex problems: An approach based on the {K}urdyka-{{\L}}ojasiewicz
  inequality.
\newblock \emph{Mathematics of Operations Research}, 35\penalty0 (2):\penalty0
  438--457, 2010.

\bibitem[Attouch et~al.(2013)Attouch, Bolte, and
  Svaiter]{Attouch2013convergence}
Hedy Attouch, J{\'e}r{\^o}me Bolte, and Benar~Fux Svaiter.
\newblock Convergence of descent methods for semi-algebraic and tame problems:
  Proximal algorithms, forward--backward splitting, and regularized
  {G}auss--{S}eidel methods.
\newblock \emph{Mathematical Programming}, 137\penalty0 (1):\penalty0 91--129,
  2013.

\bibitem[Bajovic et~al.(2017)Bajovic, Jakovetic, Krejic, and
  Jerinkic]{Bajovic2017newton}
Dragana Bajovic, Dusan Jakovetic, Natasa Krejic, and Natasa~Krklec Jerinkic.
\newblock {N}ewton-like method with diagonal correction for distributed
  optimization.
\newblock \emph{SIAM Journal on Optimization}, 27\penalty0 (2):\penalty0
  1171--1203, 2017.

\bibitem[Bolte et~al.(2007)Bolte, Daniilidis, and Lewis]{Bolte2007lojasiewicz}
J{\'e}r{\^o}me Bolte, Aris Daniilidis, and Adrian Lewis.
\newblock The {{\L}}ojasiewicz inequality for nonsmooth subanalytic functions
  with applications to subgradient dynamical systems.
\newblock \emph{SIAM Journal on Optimization}, 17\penalty0 (4):\penalty0
  1205--1223, 2007.

\bibitem[Bolte et~al.(2014)Bolte, Sabach, and Teboulle]{Bolte2014proximal}
J{\'e}r{\^o}me Bolte, Shoham Sabach, and Marc Teboulle.
\newblock Proximal alternating linearized minimization for nonconvex and
  nonsmooth problems.
\newblock \emph{Mathematical Programming}, 146\penalty0 (1):\penalty0 459--494,
  2014.

\bibitem[Chang et~al.(2015)Chang, Hong, and Wang]{Chang2015multi}
Tsung-Hui Chang, Mingyi Hong, and Xiangfeng Wang.
\newblock Multi-agent distributed optimization via inexact consensus {ADMM}.
\newblock \emph{IEEE Transactions on Signal Processing}, 63\penalty0
  (2):\penalty0 482--497, 2015.

\bibitem[Chang et~al.(2020)Chang, Hong, Wai, Zhang, and
  Lu]{Chang2020distributed}
Tsung-Hui Chang, Mingyi Hong, Hoi-To Wai, Xinwei Zhang, and Songtao Lu.
\newblock Distributed learning in the nonconvex world: From batch data to
  streaming and beyond.
\newblock \emph{IEEE Signal Processing Magazine}, 37\penalty0 (3):\penalty0
  26--38, 2020.

\bibitem[Chen et~al.(2023)Chen, Ye, Wang, Huang, Dai, Tsang, and
  Liu]{Chen2023decentralized}
Jun Chen, Haishan Ye, Mengmeng Wang, Tianxin Huang, Guang Dai, Ivor~W Tsang,
  and Yong Liu.
\newblock Decentralized {R}iemannian conjugate gradient method on the {S}tiefel
  manifold.
\newblock \emph{arXiv:2308.10547}, 2023.

\bibitem[Chen et~al.(2021)Chen, Garcia, Hong, and
  Shahrampour]{Chen2021decentralized}
Shixiang Chen, Alfredo Garcia, Mingyi Hong, and Shahin Shahrampour.
\newblock {Decentralized Riemannian gradient descent on the Stiefel manifold}.
\newblock In \emph{Proceedings of the 38th International Conference on Machine
  Learning}, volume 139, pages 1594--1605. PMLR, 2021.

\bibitem[Chen et~al.(2010)Chen, Zou, and Cook]{Chen2010coordinate}
Xin Chen, Changliang Zou, and R~Dennis Cook.
\newblock Coordinate-independent sparse sufficient dimension reduction and
  variable selection.
\newblock \emph{The Annals of Statistics}, 38\penalty0 (6):\penalty0
  3696--3723, 2010.

\bibitem[Daneshmand et~al.(2021)Daneshmand, Scutari, Dvurechensky, and
  Gasnikov]{Daneshmand2021newton}
Amir Daneshmand, Gesualdo Scutari, Pavel Dvurechensky, and Alexander Gasnikov.
\newblock {N}ewton method over networks is fast up to the statistical
  precision.
\newblock In \emph{Proceedings of the 38th International Conference on Machine
  Learning}, pages 2398--2409. PMLR, 2021.

\bibitem[Deng and Hu(2023)]{Deng2023decentralized}
Kangkang Deng and Jiang Hu.
\newblock Decentralized projected {R}iemannian gradient method for smooth
  optimization on compact submanifolds.
\newblock \emph{arXiv:2304.08241}, 2023.

\bibitem[Dimarogonas et~al.(2011)Dimarogonas, Frazzoli, and
  Johansson]{Dimarogonas2011distributed}
Dimos~V Dimarogonas, Emilio Frazzoli, and Karl~H Johansson.
\newblock Distributed event-triggered control for multi-agent systems.
\newblock \emph{IEEE Transactions on Automatic Control}, 57\penalty0
  (5):\penalty0 1291--1297, 2011.

\bibitem[Du and Wang(2024)]{Du2024empirical}
Jinye Du and Qihua Wang.
\newblock Empirical likelihood inference over decentralized networks.
\newblock \emph{arXiv:2401.12836}, 2024.

\bibitem[Gao and Ma(2023)]{Gao2023sparse}
Sheng Gao and Zongming Ma.
\newblock Sparse {GCA} and thresholded gradient descent.
\newblock \emph{Journal of Machine Learning Research}, 24\penalty0
  (135):\penalty0 1--61, 2023.

\bibitem[Hajinezhad and Hong(2019)]{Hajinezhad2019}
Davood Hajinezhad and Mingyi Hong.
\newblock {Perturbed proximal primal--dual algorithm for nonconvex nonsmooth
  optimization}.
\newblock \emph{Mathematical Programming}, 176\penalty0 (1):\penalty0 207--245,
  2019.

\bibitem[Hotelling(1936)]{Hotelling1936relations}
Harold Hotelling.
\newblock Relations between two sets of variates.
\newblock \emph{Biometrika}, 28\penalty0 (3--4):\penalty0 321--377, 1936.

\bibitem[Kurdyka(1998)]{Kurdyka1998gradients}
Krzysztof Kurdyka.
\newblock On gradients of functions definable in o-minimal structures.
\newblock \emph{Annales de l'institut Fourier}, 48\penalty0 (3):\penalty0
  769--783, 1998.

\bibitem[LeCun et~al.(1998)LeCun, Bottou, Bengio, and
  Haffner]{LeCun1998gradient}
Yann LeCun, L{\'e}on Bottou, Yoshua Bengio, and Patrick Haffner.
\newblock Gradient-based learning applied to document recognition.
\newblock \emph{Proceedings of the IEEE}, 86\penalty0 (11):\penalty0
  2278--2324, 1998.

\bibitem[Ling et~al.(2015)Ling, Shi, Wu, and Ribeiro]{Ling2015}
Qing Ling, Wei Shi, Gang Wu, and Alejandro Ribeiro.
\newblock {DLM}: Decentralized linearized alternating direction method of
  multipliers.
\newblock \emph{IEEE Transactions on Signal Processing}, 63\penalty0
  (15):\penalty0 4051--4064, 2015.

\bibitem[Lojasiewicz(1963)]{Lojasiewicz1963propriete}
Stanislaw Lojasiewicz.
\newblock Une propri{\'e}t{\'e} topologique des sous-ensembles analytiques
  r{\'e}els.
\newblock \emph{Les {\'E}quations aux D{\'e}riv{\'e}es Partielles},
  117:\penalty0 87--89, 1963.

\bibitem[Nedi{\'c} and Ozdaglar(2009)]{Nedic2009}
Angelia Nedi{\'c} and Asuman Ozdaglar.
\newblock Distributed subgradient methods for multi-agent optimization.
\newblock \emph{IEEE Transactions on Automatic Control}, 54\penalty0
  (1):\penalty0 48--61, 2009.

\bibitem[Nedi{\'c} et~al.(2018)Nedi{\'c}, Olshevsky, and
  Rabbat]{Nedic2018network}
Angelia Nedi{\'c}, Alex Olshevsky, and Michael~G Rabbat.
\newblock {Network topology and communication-computation tradeoffs in
  decentralized optimization}.
\newblock \emph{Proceedings of the IEEE}, 106\penalty0 (5):\penalty0 953--976,
  2018.

\bibitem[Pillai et~al.(2005)Pillai, Suel, and Cha]{Pillai2005perron}
S~Unnikrishna Pillai, Torsten Suel, and Seunghun Cha.
\newblock The {P}erron--{F}robenius theorem: {S}ome of its applications.
\newblock \emph{IEEE Signal Processing Magazine}, 22\penalty0 (2):\penalty0
  62--75, 2005.

\bibitem[Qu and Li(2017)]{Qu2017}
Guannan Qu and Na~Li.
\newblock Harnessing smoothness to accelerate distributed optimization.
\newblock \emph{IEEE Transactions on Control of Network Systems}, 5\penalty0
  (3):\penalty0 1245--1260, 2017.

\bibitem[Raja and Bajwa(2015)]{Raja2015cloud}
Haroon Raja and Waheed~U Bajwa.
\newblock Cloud {K-SVD}: A collaborative dictionary learning algorithm for big,
  distributed data.
\newblock \emph{IEEE Transactions on Signal Processing}, 64\penalty0
  (1):\penalty0 173--188, 2015.

\bibitem[Shi et~al.(2015)Shi, Ling, Wu, and Yin]{Shi2015}
Wei Shi, Qing Ling, Gang Wu, and Wotao Yin.
\newblock {EXTRA}: An exact first-order algorithm for decentralized consensus
  optimization.
\newblock \emph{SIAM Journal on Optimization}, 25\penalty0 (2):\penalty0
  944--966, 2015.

\bibitem[Snoek et~al.(2006)Snoek, Worring, Van~Gemert, Geusebroek, and
  Smeulders]{Snoek2006challenge}
Cees~GM Snoek, Marcel Worring, Jan~C Van~Gemert, Jan-Mark Geusebroek, and
  Arnold~WM Smeulders.
\newblock The challenge problem for automated detection of 101 semantic
  concepts in multimedia.
\newblock In \emph{Proceedings of the 14th ACM International Conference on
  Multimedia}, pages 421--430, 2006.

\bibitem[Song et~al.(2023)Song, Shi, Pu, and Yan]{Song2023optimal}
Zhuoqing Song, Lei Shi, Shi Pu, and Ming Yan.
\newblock Optimal gradient tracking for decentralized optimization.
\newblock \emph{Mathematical Programming}, pages 1--53, 2023.

\bibitem[Sun et~al.(2022)Sun, Scutari, and Daneshmand]{Sun2022distributed}
Ying Sun, Gesualdo Scutari, and Amir Daneshmand.
\newblock Distributed optimization based on gradient tracking revisited:
  Enhancing convergence rate via surrogation.
\newblock \emph{SIAM Journal on Optimization}, 32\penalty0 (2):\penalty0
  354--385, 2022.

\bibitem[Sun et~al.(2024)Sun, Chen, Garcia, and Shahrampour]{Sun2024global}
Youbang Sun, Shixiang Chen, Alfredo Garcia, and Shahin Shahrampour.
\newblock Global convergence of decentralized retraction-free optimization on
  the {S}tiefel manifold.
\newblock \emph{arXiv:2405.11590}, 2024.

\bibitem[Wang et~al.(2023)Wang, Hu, Chen, Deng, and So]{Wang2023decentralized}
Jinxin Wang, Jiang Hu, Shixiang Chen, Zengde Deng, and Anthony Man-Cho So.
\newblock Decentralized weakly convex optimization over the {S}tiefel manifold.
\newblock \emph{arXiv:2303.17779}, 2023.

\bibitem[Wang and Liu(2022)]{Wang2022decentralized}
Lei Wang and Xin Liu.
\newblock Decentralized optimization over the {S}tiefel manifold by an
  approximate augmented {L}agrangian function.
\newblock \emph{IEEE Transactions on Signal Processing}, 70:\penalty0
  3029--3041, 2022.

\bibitem[Wang and Liu(2023{\natexlab{a}})]{Wang2023smoothing}
Lei Wang and Xin Liu.
\newblock Smoothing gradient tracking for decentralized optimization over the
  {S}tiefel manifold with non-smooth regularizers.
\newblock In \emph{Proceedings of the 62nd IEEE Conference on Decision and
  Control (CDC)}, pages 126--132. IEEE, 2023{\natexlab{a}}.

\bibitem[Wang and Liu(2023{\natexlab{b}})]{Wang2023variance}
Lei Wang and Xin Liu.
\newblock A variance-reduced stochastic gradient tracking algorithm for
  decentralized optimization with orthogonality constraints.
\newblock \emph{Journal of Industrial and Management Optimization}, 19\penalty0
  (10):\penalty0 7753--7776, 2023{\natexlab{b}}.

\bibitem[Wang et~al.(2024)Wang, Bao, and Liu]{Wang2024decentralized}
Lei Wang, Le~Bao, and Xin Liu.
\newblock A decentralized proximal gradient tracking algorithm for composite
  optimization on {R}iemannian manifolds.
\newblock \emph{arXiv:2401.11573}, 2024.

\bibitem[Xiao and Boyd(2004)]{Xiao2004}
Lin Xiao and Stephen Boyd.
\newblock Fast linear iterations for distributed averaging.
\newblock \emph{Systems \& Control Letters}, 53\penalty0 (1):\penalty0 65--78,
  2004.

\bibitem[Xiao et~al.(2024)Xiao, Liu, and Toh]{Xiao2024dissolving}
Nachuan Xiao, Xin Liu, and Kim-Chuan Toh.
\newblock Dissolving constraints for {R}iemannian optimization.
\newblock \emph{Mathematics of Operations Research}, 49\penalty0 (1):\penalty0
  366--397, 2024.

\bibitem[Xu et~al.(2015)Xu, Zhu, Soh, and Xie]{Xu2015}
Jinming Xu, Shanying Zhu, Yeng~Chai Soh, and Lihua Xie.
\newblock Augmented distributed gradient methods for multi-agent optimization
  under uncoordinated constant stepsizes.
\newblock In \emph{Proceedings of the 54th IEEE Conference on Decision and
  Control (CDC)}, pages 2055--2060. IEEE, 2015.

\bibitem[Yuan et~al.(2021)Yuan, Chen, Huang, Zhang, Pan, Xu, and
  Yin]{Yuan2021decentlam}
Kun Yuan, Yiming Chen, Xinmeng Huang, Yingya Zhang, Pan Pan, Yinghui Xu, and
  Wotao Yin.
\newblock Decent{L}a{M}: Decentralized momentum {SGD} for large-batch deep
  training.
\newblock In \emph{Proceedings of the IEEE/CVF International Conference on
  Computer Vision (ICCV)}, pages 3029--3039, 2021.

\bibitem[Zhang et~al.(2021)Zhang, Ling, and So]{Zhang2021newton}
Jiaojiao Zhang, Qing Ling, and Anthony Man-Cho So.
\newblock {A Newton tracking algorithm with exact linear convergence for
  decentralized consensus optimization}.
\newblock \emph{IEEE Transactions on Signal and Information Processing over
  Networks}, 7:\penalty0 346--358, 2021.

\bibitem[Zhang et~al.(2024)Zhang, Xiao, and Liu]{Zhang2024decentralized}
Siyuan Zhang, Nachuan Xiao, and Xin Liu.
\newblock Decentralized stochastic subgradient methods for nonsmooth nonconvex
  optimization.
\newblock \emph{arXiv:2403.11565}, 2024.

\bibitem[Zhu and Mart{\'\i}nez(2010)]{Zhu2010discrete}
Minghui Zhu and Sonia Mart{\'\i}nez.
\newblock {Discrete-time dynamic average consensus}.
\newblock \emph{Automatica}, 46\penalty0 (2):\penalty0 322--329, 2010.

\end{thebibliography}

\addcontentsline{toc}{section}{References}

\end{document}